\newcommand*{\bigcdot}{}
\DeclareRobustCommand*{\bigcdot}{%
  \mathbin{\mathpalette\bigcdot@{}}%
}
\newcommand*{\bigcdot@scalefactor}{.75}
\newcommand*{\bigcdot@widthfactor}{1.15}
\newcommand*{\bigcdot@}[2]{%
  \sbox0{$#1\vcenter{}$}
  \sbox2{$#1\cdot\m@th$}%
  \hbox to \bigcdot@widthfactor\wd2{%
    \hfil
    \raise\ht0\hbox{%
      \scalebox{\bigcdot@scalefactor}{%
        \lower\ht0\hbox{$#1\bullet\m@th$}%
      }%
    }%
    \hfil
  }%
}
\newcommand{\nc}{\newcommand}
\nc{\dmo}{\DeclareMathOperator}
\nc{\nt}{\newtheorem}
\let\ORIincludegraphics\includegraphics
\renewcommand{\includegraphics}[2][]{\ORIincludegraphics[scale=0.91666,#1]{#2}}
\nc{\C}{\mathcal{C}}
\dmo{\Mod}{Mod}
\dmo{\Teich}{Teich}
\dmo{\PMod}{PMod}
\dmo{\LMod}{LMod}
\dmo{\Homeo}{Homeo}
\dmo{\Aut}{Aut}
\dmo{\Sq}{Sq}
\dmo{\Cub}{Cub}
\dmo{\Rot}{Rot}
\nc{\Z}{\mathbb Z}
\nc{\Q}{\mathbb Q}
\nc{\N}{\mathcal N}
\nc{\M}{\mathcal M}
\nc{\R}{\mathbb R}
\nc{\A}{\mathcal A}
\nc{\Y}{\mathcal Y}
\nc{\T}{\mathcal T}
\nc{\p}[1]{\bigskip\noindent\emph{#1.}}
\title[Recognizing topological polynomials by lifting trees]{Recognizing topological polynomials \\ by lifting trees}
\author{James Belk}
\author{Justin Lanier}
\author{Dan Margalit}
\author{Rebecca R. Winarski}
\address{James Belk \\ Department of Mathematics \\ 
Cornell University \\
Ithaca, NY 14853}
\email{jmb226@cornell.edu}
\address{Justin Lanier \\ Department of Mathematics \\ University of Chicago \\ 5734 S University Ave \\ Chicago, IL 60637}
\email{jlanier@math.uchicago.edu}
\address{Dan Margalit \\ School of Mathematics\\ Georgia Institute of Technology \\ 686 Cherry St. \\ Atlanta, GA 30332}
\email{margalit@math.gatech.edu}
\address{Rebecca R. Winarski \\  Department of Mathematics and Computer Science\\ College of the Holy Cross\\
1 College Street
Worcester, MA 01610}
\email{rebecca.winarski@gmail.com}
\begin{document}
\maketitle

\vspace*{-2.5ex}

\begin{abstract}
We give a simple algorithm that determines whether a given post-critically finite topological polynomial is Thurston equivalent to a polynomial. If it is, the algorithm produces the Hubbard tree; otherwise, the algorithm produces the canonical obstruction.  Our approach is rooted in geometric group theory, using iteration on a simplicial complex of trees, and building on work of Nekrashevych.  As one application of our methods, we resolve the polynomial case of Pilgrim's finite global attractor conjecture.  We also give a new solution to Hubbard's twisted rabbit problem, and we state and solve several generalizations of Hubbard's problem where the number of post-critical points is arbitrarily large.
\end{abstract}


\section{Introduction}

Thurston's theorem in complex dynamics states that a post-critically finite map $S^2 \to S^2$ is equivalent to a rational map if and only if it does not have what is called a Thurston obstruction~\cite{DH}.  Thurston's work does not provide an effective algorithm for deciding whether such a map is equivalent to a rational map.  In this paper we give an efficient, geometric, algorithmic solution to this basic decision problem in the special case of post-critically finite topological polynomials.  Specifically, we address the following.

\medskip \noindent \emph{Topological polynomial decision problem.}
Given a post-critically finite topological polynomial, determine whether or not it is Thurston equivalent to a polynomial.  If it is, determine the polynomial.  If it is not, determine the canonical Thurston obstruction.

\medskip

In the case where a given post-critically finite topological polynomial is Thurston equivalent to a polynomial, the sense in which the algorithm determines the polynomial is as follows: the output of our algorithm is the isotopy class of the corresponding Hubbard tree, relative to the post-critical set.  The Hubbard tree is a combinatorial invariant that completely determines the corresponding polynomial. If one further wanted to know the coefficients of the polynomial, a numerical approximation algorithm such as the Hubbard--Schleicher spider algorithm could be applied \cite{spider}.

Our algorithm is based on our main theoretical result, Theorem~\ref{thm:main}.  For each $n$ we define a simplicial complex $\T_n$, whose vertices are homotopy classes of trees in the plane with $n$ marked points.  For a post-critically finite topological polynomial $f$  with $n$ post-critical points, we define a simplicial map $\lambda_f : \T_n \to \T_n$, called the lifting map.  Theorem~\ref{thm:main} states that $\lambda_f$ has a nucleus that is contained in the 2-neighborhood of the Hubbard tree when $f$ is unobstructed and that is contained in the 1-neighborhood of the set of trees that are compatible with the canonical obstruction when $f$ is obstructed.  The algorithm proceeds then by iteration of $\lambda_f$ on an arbitrary vertex of $\T_n$.  Our approach is inspired by similar constructions in geometric group theory, in particular related work of Nekrashevych \cite{nek_cactus}; see the discussion after the statement of Theorem~\ref{thm:main}.

In addition to solving the topological polynomial decision problem, we apply our methods to address two of the guiding problems in the field.  As a corollary of Theorem~\ref{thm:main}, we resolve in the affirmative Pilgrim's finite global attractor conjecture for the case of polynomials.  As a second application of our methods, we give a new, self-contained solution to Hubbard's twisted rabbit problem, which was originally solved by Bartholdi--Nekrashevych using iterated monodromy groups.  Then we state and solve a generalization of the twisted rabbit problem where the number of post-critical points is arbitrarily large. Finally, we state and solve another family of twisted polynomial problems where the number of post-critical points is arbitrarily large and where obstructed maps arise in the answer.

The remainder of the introduction is structured as follows.  In Section~\ref{sec:statement} we give the relevant background, state Theorem~\ref{thm:main}, give an overview of the proof, and give several examples of nuclei.  In Section~\ref{sec:algorithms} we explain in detail how Theorem~\ref{thm:main} gives the iterative algorithm for the topological polynomial decision problem.  In Section~\ref{sec:apps} we discuss our applications to the finite global attractor conjecture and to twisted polynomial problems.  In Section~\ref{sec:comp} we compare our work with prior results related to the topological polynomial decision problem.  We conclude by giving an outline of the body of the paper.  


\subsection{Statement of the main result}
\label{sec:statement}

Before stating our main result, Theorem~\ref{thm:main}, we introduce some background about topological polynomials.  In particular we discuss two objects that can be associated to a topological polynomial, one in the case when it is equivalent to a polynomial and one in the case when it is not.  These objects are called Hubbard trees and canonical obstructions, respectively.

\p{Topological polynomials} A topological polynomial is an orientation-preserving branched cover $f: \R^2 \to \R^2$ with degree greater than 1 and finitely many critical points.  First examples of topological polynomials are polynomials in one variable defined over $\mathbb{C}$ with degree greater than 1. The post-critical set $P_f$ of a topological polynomial $f$ is the set of forward orbits of the set of its critical points (critical points are not necessarily post-critical).  We say that $f$ is post-critically finite if $P_f$ is finite.  

Two post-critically finite topological polynomials $f$ and $g$ are said to be Thurston equivalent if there are orientation-preserving homeomorphisms $\phi_0,\phi_1 : (\R^2,P_f) \to (\R^2,P_g)$ that are isotopic (relative to $P_f$) and make the following diagram commute:
\[
\xymatrix
   { 
   (\R^2,P_f)  \ar[d]_f \ar[r]^{\phi_1} & (\R^2,P_g) \ar[d]^g  & \\
   (\R^2,P_f) \ar[r]^{\phi_0} & (\R^2,P_g) & \\
   }
\]
Thurston rigidity states that two Thurston equivalent  polynomials are conjugate by an affine map \cite{DH}, and so if a topological polynomial is Thurston equivalent to a polynomial, then this polynomial is unique up to affine equivalence.  

\p{Hubbard trees} Every post-critically finite polynomial $f$ has an associated Hubbard tree.  Several non-equivalent definitions of Hubbard trees appear in the literature.  In this paper, the Hubbard tree for a post-critically finite polynomial $f$ is the subset of $\mathbb{C}$ given by the union of all regulated arcs in the filled Julia set for~$f$ between pairs of points in the post-critical set~$P_f$.   This tree is invariant in the sense that $f(H_f)\subseteq H_f$.  Such trees were first described by Douady--Hubbard \cite{DH1,DH2}. 

If a topological polynomial $f$ is equivalent to a polynomial, then it has an associated topological Hubbard tree that is invariant up to isotopy: given a Thurston equivalence from $f$ to a polynomial $p$, we may pull back the Hubbard tree for $p$ to obtain the topological Hubbard tree for $f$.  A (topological) Hubbard tree, together with its preimage and the associated mapping of trees, is a complete invariant for the Thurston equivalence class of an unobstructed topological polynomial (see the Alexander method in Section~\ref{sec:alex}).

\p{Obstructions} A Levy cycle is a nonempty collection of disjoint essential simple closed curves $\{c_0,\dots,c_{k-1}\}$ in $\R^2 \setminus P_f$ so that for each $i$, at least one component $\tilde c_{i-1}$ of the preimage $f^{-1}(c_{i})$ is homotopic to $c_{i-1}$ and so that the restriction $\tilde c_{i-1} \to c_i$ of $f$ is a degree 1 map, with indices taken modulo~$k$.

A post-critically finite topological polynomial is equivalent to a polynomial if and only if it does not have a Levy cycle; see Hubbard's book  \cite[Theorem 10.3.8]{hubbard} and his paper with Bielefeld and Fisher \cite[Proposition 5.5]{BFH}.  This criterion is a specialization of Thurston's theorem~\cite{DH},  which treats the more general case of a post-critically finite branched cover of the sphere.

If a post-critically finite topological polynomial has a Levy cycle, then we say that the topological polynomial is obstructed.  Similarly, a topological polynomial is unobstructed if it is equivalent to a polynomial.

An obstructed post-critically finite topological polynomial may have infinitely many different Levy cycles (see Section~\ref{sec:canonical form} for an example).  Pilgrim proved that an obstructed post-critically finite topological polynomial $f$ has associated to it a collection of curves called the canonical obstruction~$\Gamma_f$, which is a specific union of Levy cycles together with all (essential) iterated preimages of these cycles; see Section~\ref{sec:obs} for more details. 

\p{Statement of the main theorem} In Section~\ref{sec:complex} we define for each $n$ a locally finite simplicial complex  $\T_n$.  The vertices of $\T_n$ are isotopy classes of trees, and there is a natural metric on the set of vertices given by the path metric in the 1-skeleton.  For each topological polynomial $f$ with $|P_f|=n$ we define a simplicial map
\[
\lambda_f : \T_n \to \T_n,
\]
that we call the lifting map.  The map $\lambda_f : \T_n \to \T_n$ is a combinatorial analogue of Thurston's pullback map on Teichm\"uller space; see Section~\ref{sec:lifting} for a comparison.  

For an unobstructed post-critically finite topological polynomial, the topological Hubbard tree is a fixed vertex $H_f$ for $\lambda_f$ in $\T_n$; we refer to $H_f$ as the Hubbard vertex for $f$.  In the obstructed case, there is a subset of the set of vertices of $\T_n$ that we call the Levy set $L_f$, which encodes the canonical obstruction $\Gamma_f$; specifically, $L_f$ is the set of vertices of $\T_n$ with the property that $\Gamma_f$ is the boundary of a neighborhood of a subforest of the corresponding tree.  

We say that a subcomplex $N$ of $\T_n$ is a nucleus for $\lambda_f$ if for every vertex $T$ of $\T_n$, the sequence of vertices $\lambda_f^k(T)$ lies in $N$ for all $k$ sufficiently large. The following is our main theorem.  It is the theoretical underpinning of our algorithm to solve the decision problem stated above.

\begin{theorem}
\label{thm:main}
Let $f$ be a post-critically finite topological polynomial with $|P_f|=n$, and let $\lambda_f : \T_n \to \T_n$ be the lifting map.
\begin{enumerate}
    \item If $f$ is unobstructed, then the 2-neighborhood of $H_f$ is a nucleus for $\lambda_f$.\smallskip
    \item If $f$ is obstructed, then the 1-neighborhood of $L_f$ is a nucleus for $\lambda_f$.
\end{enumerate}
\end{theorem}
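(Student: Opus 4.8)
The plan is to analyze the lifting map $\lambda_f$ as a coarse contraction toward a canonical invariant object, in the spirit of Nekrashevych's contraction arguments for self-similar groups and Thurston's pullback map. The key is to produce a Lyapunov-type quantity that decreases under $\lambda_f$ until the orbit reaches the prescribed neighborhood. First I would fix the right notion of ``distance from the Hubbard vertex'' (resp.\ from the Levy set $L_f$): the path metric on the $1$-skeleton of $\T_n$ is too rigid to contract exactly, so I expect to work instead with a combinatorial complexity $c(T)$ measuring, roughly, the number of intersections of $T$ with a fixed reference multicurve or reference tree, or equivalently the word length of the mapping class taking $H_f$ to $T$. The heart of the argument will be an inequality of the form $c(\lambda_f(T)) \le c(T)$, with strict decrease whenever $c(T)$ is large, which forces every orbit into a bounded region; one then checks separately that this bounded region is exactly the claimed $2$-neighborhood (resp.\ $1$-neighborhood).

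In the unobstructed case, I would first reduce to the polynomial itself via the given Thurston equivalence, so that $f$ genuinely preserves an honest Hubbard tree $H_f$ with $f(H_f)\subseteq H_f$. The central geometric input is then the interaction between taking preimages of trees under $f$ and surgery/isotopy: given an arbitrary tree $T$, I would isotope it to minimal position with respect to the Hubbard tree and its preimage $f^{-1}(H_f)$, analyze how $\lambda_f(T)$ (the lift of $T$, spanned appropriately on the preimage data) sits relative to $H_f$, and use an innermost-disk or bigon-removal argument to show that ``excess'' intersections with $H_f$ strictly decrease under lifting. This uses the degree-$\ge 2$ expansion of $f$ on the complement of the Hubbard tree. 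Once complexity is bounded, the finitely many remaining vertices must be handled by a direct structural classification: I would show that any $\lambda_f$-periodic tree of bounded complexity is forced, via the Alexander method cited in Section~\ref{sec:alex} and the completeness of the Hubbard tree as an invariant, to lie within distance $2$ of $H_f$, and that the $2$-neighborhood is itself $\lambda_f$-invariant so the orbit cannot escape once it enters.

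In the obstructed case the scheme is parallel but the fixed object is a multicurve rather than a tree. Using Pilgrim's canonical obstruction $\Gamma_f$ and the fact that the associated ``thick part'' (the subsurfaces cut off by $\Gamma_f$) carries the non-contracting directions of Thurston's pullback map, I would argue that $\lambda_f$ still contracts complexity transverse to $\Gamma_f$: any tree far from $L_f$ crosses $\Gamma_f$ or its preimages inefficiently, and lifting reduces this inefficiency, again by a bigon-removal argument combined with the defining property of a Levy cycle (the degree-$1$ restriction $\tilde c_{i-1}\to c_i$ is what prevents the curve components from being pushed off). The orbit therefore limits into the set of trees for which $\Gamma_f$ bounds a neighborhood of a subforest, i.e.\ $L_f$, up to the stated slack of $1$; the slack of $1$ rather than $0$ comes from the fact that $\lambda_f$ can shuffle the subforest complementary to $\Gamma_f$ by an elementary move. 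I would then verify invariance of the $1$-neighborhood of $L_f$ exactly as in the unobstructed case.

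The main obstacle I anticipate is the contraction estimate itself: establishing that lifting strictly decreases the chosen complexity outside a bounded set is not formal, because the lift of a tree is only defined up to isotopy and one must choose representatives carefully so that intersection numbers behave well under $f^{-1}$. Getting the bigon/innermost-disk surgery to interact correctly with the branched cover — in particular near the critical values, where $f^{-1}$ of an arc can fail to be an arc — will require the bulk of the technical work, and pinning down the exact constants $2$ and $1$ (rather than some larger uniform bound) will likely force a somewhat delicate case analysis of the low-complexity periodic trees.
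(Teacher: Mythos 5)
Your proposal hinges on a strict contraction estimate --- a complexity $c$ with $c(\lambda_f(T))\le c(T)$ and strict decrease outside a bounded set, proved by bigon-removal and the ``degree $\ge 2$ expansion of $f$ on the complement of the Hubbard tree.'' This is a genuine gap, and it is precisely the step the paper is designed to avoid. Such expansion arguments are available only for hyperbolic polynomials (this is essentially Nekrashevych's contraction argument, which the paper cites and explains does not obviously extend beyond the hyperbolic case); for non-hyperbolic maps such as $z^2+i$, and a fortiori for obstructed maps, there is no uniform expansion off the Hubbard tree to drive the surgery argument. The paper never proves any strict contraction. Instead it observes that $\lambda_f$ is \emph{simplicial}, hence $1$-Lipschitz for the path metric on the $1$-skeleton; since $H_f$ is a fixed vertex, every ball around $H_f$ maps into itself; since $\T_n$ is locally finite and connected, those balls are finite, so every orbit is eventually periodic. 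No Lyapunov function or decrease estimate is needed --- only non-expansion plus a fixed point plus local finiteness.

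The second half of your plan is also underpowered: you propose to classify periodic trees of bounded complexity ``directly'' via the Alexander method, but what actually pins down the constant $2$ is a structural analysis of \emph{invariant} trees using Poirier's conditions. Given a tree fixed by $\lambda_{f^r}$, one first performs a forest expansion (a folding determined by a non-negative invariant angle assignment, produced by a Brouwer fixed-point argument on the simplex of angle assignments) to achieve the angle condition, then a forest collapse (of the periodic Julia edges and their iterated preimages) to achieve the expanding condition; Poirier's theorem then identifies the result as $H_f$, giving distance at most $1+1=2$. In the obstructed case there is a further idea you are missing: $\lambda_f$ only fixes the Hubbard \emph{bubble tree} in the augmented complex $\hat\T_n$, which is not locally finite there, so one passes to the quotient of the partially augmented complex by the stabilizer $G_f$ of $H_f$ (the twists supported inside the canonical obstruction) to restore local finiteness, obtaining trees that are invariant only modulo $G_f$ and then running the expand/collapse analysis for the modified map $f^r g$. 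Your heuristic that the slack of $1$ ``comes from an elementary move shuffling the subforest'' does not substitute for this.
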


In the case where $f$ is unobstructed, Theorem~\ref{thm:main}(1) implies that $\lambda_f$ has a finite nucleus.  Even more, there is a unique minimal nucleus, contained in the 2-neighborhood of $H_f$, consisting of all vertices that are periodic under $\lambda_f$.  

In 2014, Nekrashevych \cite[Section 7.6]{nek_cactus} defined a polysimplicial complex $\widetilde D_n$ that is closely related to our simplicial complex $\T_n$.  He also defined for any topological polynomial with $n$ post-critical points an associated map $\Phi : \widetilde D_n \to \widetilde D_n$, which is analogous to our lifting map $\lambda_f$.  There is a straightforward argument to show that $\Phi$ is contracting in the case that $f$ is hyperbolic, hence giving a version of Theorem~\ref{thm:main}(1) for the special case when $f$ is a hyperbolic polynomial.   Here, a topological polynomial is hyperbolic if every critical point is attracted to a cycle containing a critical point; there are many topological polynomials that are not hyperbolic, including all obstructed topological polynomials and, for example, the polynomial $z^2+i$ discussed below.  See the paragraph ``Comparisons to prior works'' below for further discussion of Nekrashevych's work.

\p{Examples of nuclei: the rabbit, the co-rabbit, and the airplane polynomials} Up to Thurston equivalence, there are exactly three quadratic polynomials where the critical point is periodic with period 3.  These polynomials are called the rabbit, co-rabbit, and airplane polynomials (the names come from the shapes of their Julia sets); we denote them $R(z)$, $C(z)$, and~$A(z)$.  They are all of the form $z^2+c$, where $c$ is a nonzero root of the quartic polymomial $(c^2+c)^2+c$ (this is exactly the condition that 0 is 3-periodic).  The values of $c$ for $R$, $C$, and $A$ are approximately $-.12+.74i$, $-.12-.74i$, and $-1.75$, respectively.

\begin{figure}
\centering
\raisebox{-0.47\height}{\includegraphics{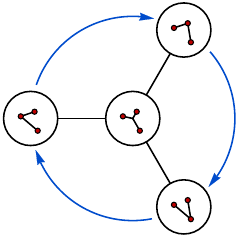}}\qquad\qquad\quad
\raisebox{-0.47\height}{\includegraphics{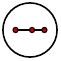}}
\quad\qquad\qquad
\raisebox{-0.47\height}{\includegraphics{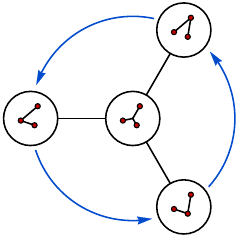}}
\caption{The minimal nuclei in $\T_3$ for $R$, $A$, and $C$}
\label{fig:nuclei}
\end{figure}

The minimal nuclei for these three polynomials are shown in Figure~\ref{fig:nuclei}; in the cases of the rabbit and co-rabbit polynomials, the central vertex is invariant and the other three vertices are cyclically permuted in the direction of the arrows under lifting.  

By Theorem~\ref{thm:main}, we can verify the purported nuclei by inspecting the action of the lifting map on the 2-neighborhood of the Hubbard vertex in each case.  Figure~\ref{fig:nuclei3} illustrates this action in the case of the rabbit polynomial.  

For the rabbit and co-rabbit polynomials, the minimal nucleus is equal to the 1-neighborhood of the Hubbard vertex and for the airplane polynomial the nucleus is the Hubbard vertex itself; in particular, in these cases the minimal nucleus is strictly smaller than the 2-neighborhood of the Hubbard vertex.  On the other hand, in Section~\ref{sec:alex} we give an example of a polynomial $f$ whose minimal nucleus is not contained in the 1-neighborhood of the Hubbard vertex.  Our proof of Theorem~\ref{thm:main} can be refined to give more precise information about the size of the minimal nucleus; see Section~\ref{sec:pfobs} for a discussion.

\begin{figure}[b]
\centering
\includegraphics{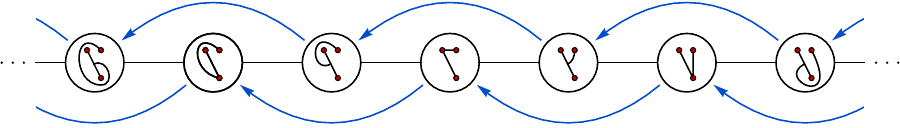}
\caption{A segment of a nucleus in $\T_3$ for $D_a^{-1}I$}
\label{fig:nuclei2}
\end{figure}

\p{An example of a nucleus for an obstructed map: twisted $z^2+i$} We now give an example of a nucleus for a certain obstructed topological polynomial that we will study in detail in Section~\ref{sec:z2+i}.  Consider the polynomial $I(z) = z^2+i$.  Its three post-critical points $i$, $-1+i$, and $-i$ are shown inside each of the circles in Figure~\ref{fig:nuclei2}.    Let $a$ be the curve in $(\R^2,P_I)$ given by the boundary of a neighborhood of the straight arc connecting $-i$ to $i$ (see also Figure~\ref{fig:I3curves} for a picture of $a$).  Then let $D_a$ be the left-handed Dehn twist about $a$.   The composition $D_a^{-1}I$ is a topological polynomial with the same post-critical set as $I$.  

The map $D_a^{-1}I$ is obstructed; the curve $b$ corresponding to the straight arc connecting $-i$ to $-1+i$ is a Levy cycle.  In $(\R^2,P_I)$ the only multicurves are single curves, and so we further deduce that $b$ is the canonical obstruction.  The Levy set $L_{D_a^{-1}I}$ thus consists of all trees compatible with $b$; these are exactly the trees shown in Figure~\ref{fig:nuclei2}.  The subgraph of $\T_3$ spanned by this Levy set is homeomorphic to $\R$; it corresponds to a ``horocycle'' in Figure~\ref{fig:TreeComplex3}.

One nucleus for $D_a^{-1}I$ is exactly this Levy set $L_{D_a^{-1}I}$, and the action of $\lambda_{D_a^{-1}I}$ on this nucleus is a translation by two ``clicks'' to the left.  As in the case of the rabbit, co-rabbit, and airplane polynomials, we can verify that this is a nucleus by considering the action of the lifting map on the 1-neighborhood of the Levy set (since $D_b$ commutes with $\lambda_{D_a^{-1}I}$, this reduces to a finite check); see Section~\ref{sec:pfobs} for another argument.

To check that the action on the lifting map on the nucleus is the translation indicated in Figure~\ref{fig:nuclei2}, we can use the description of the canonical form for $D_a^{-1}I$, described at the end of Section~\ref{sec:z2+i}.

\p{Overview of the proof} We give here a summary of the main ideas in the proof of Theorem~\ref{thm:main}.  Suppose first that $f$ is unobstructed.  Since $\T_n$ is locally finite, since $\lambda_f$ is simplicial, and since $\lambda_f$ fixes $H_f$, it follows that every vertex of $\T_n$ is pre-periodic under $\lambda_f$.  In other words, up to passing to a power of $f$ (which does not change $H_f$), every vertex of $\T_n$ is mapped to an invariant tree for $f$ by some iterate of $\lambda_f$.  Poirier gives combinatorial conditions that determine whether or not a tree is the Hubbard tree for a given polynomial \cite{poirier}; we use a version of his conditions to show that an invariant tree for $f$ has distance at most 2 from $H_f$.

In order to prove our version of Poirier's conditions, we describe a basic tool for specifying branched covers that we call the Alexander method (Proposition~\ref{prop:alexander}).  Roughly the Alexander method states that a topological polynomial $f$ is completely determined (up to homotopy $\mathrm{rel}\;P_f$) by its action on any single tree in $(\R^2,P_f)$.  It is a natural analogue in the context of branched covers of the Alexander method from the theory of mapping class groups \cite[Proposition 2.8]{primer}, which loosely states that a mapping class is determined by its action on a finite set of curves.  Versions of the Alexander method for topological polynomials have appeared in the literature, e.g. in the work of Bielefeld--Fisher--Hubbard \cite[Theorem 7.8]{BFH}

In the case that $f$ is obstructed, the proof of Theorem~\ref{thm:main} follows a similar outline.  One new ingredient is that we consider an augmented complex of trees $\hat \T_n$ to which $\lambda_f$ partially extends, so that $\lambda_f$ has a fixed vertex $H_f$, called the Hubbard vertex, as in the unobstructed case.  The vertices of $\hat \T_n$ are generalizations of trees called bubble trees, and $\hat \T_n$ contains $\T_n$ as a subcomplex.  In order to define the Hubbard vertex and to show that it is fixed by $f$, we apply Pilgrim's theory of canonical obstructions \cite{pilgrim03}, and in particular Selinger's topological characterization of canonical obstructions \cite{selinger13}.

One interpretation of Theorem~\ref{thm:main} is that in both the obstructed and unobstructed cases, the 2\mbox{-}neighborhood of $H_f$ contains a nucleus for the (analogous) lifting map $\lambda_f : \hat \T_n \dasharrow \hat \T_n$.  While this version of our theorem unifies the two cases, it does not immediately give an algorithm, since $\hat \T_n$ is not locally finite at the vertices of $\hat \T_n$ that do not lie in~$\T_n$.

We may think of the fact that $\lambda_f$ is simplicial as a sort of contraction property.  More accurately, it is a non-expansion property: the distance between any two vertices may not increase under a simplicial map.  Our proof of Theorem~\ref{thm:main} does not show directly that $\lambda_f$ is globally contracting; this only comes out as a consequence. It would be interesting to understand the global rate of contraction of~$\lambda_f$.  Nekrashevych studies a cell complex that is closely related to our $\T_n$ and shows that a hyperbolic polynomial acts on it with exponential contraction \cite[Section~7.6]{nek_cactus}.  We suspect that a similar argument would show that a hyperbolic polynomial gives exponential contraction of $\T_n$; however, the non-hyperbolic case is more mysterious.

\begin{figure}
\centering
\includegraphics[scale=.87]{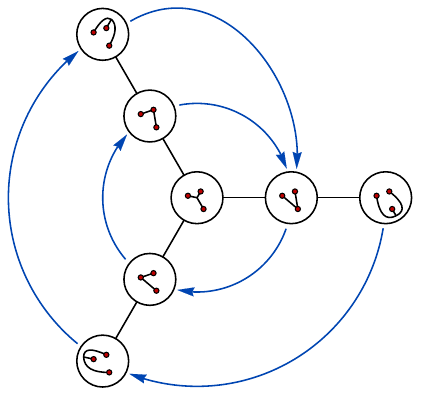}
\caption{The 2-neighborhood of the Hubbard vertex for the rabbit polynomial}
\label{fig:nuclei3}
\end{figure}

Figure~\ref{fig:nuclei3} illustrates the contraction property for the rabbit polynomial.  Here the 2-neighborhood of the Hubbard vertex maps into the 1-neighborhood after 3 iterates of the lifting map.  It follows that any vertex of distance $d \geq 2$ from the Hubbard vertex gets mapped to another vertex of distance at most $d-1$ from the Hubbard vertex after 3 iterates.  Therefore, upon iteration, every vertex of distance $d \geq 1$ eventually reaches the 1-neighborhood, which is a nucleus in this case.  

\p{The monoid of topological polynomials} Mapping class groups and braid groups are central objects of study in geometric group theory.  We describe here a monoid $\mathcal{TP}_n$ that naturally generalizes the braid group and that comes with an action on $\T_n$.  Our work suggests a more in-depth study of the monoid $\mathcal{TP}_n$ using its action on $\T_n$, in a manner analogous to the ways in which braid groups and mapping class groups have been studied using their actions on complexes of curves.  

We now define the monoid $\mathcal{TP}_n$.  Given a finite set $P\subset\mathbb{R}^2$ of marked points, let $\mathrm{BrMod}(\mathbb{R}^2,P)$ be the semigroup of all isotopy classes of topological polynomials whose post-critical set is contained in $P$. By an isotopy of topological polynomials we mean a homotopy that preserves the number and local degrees of any unmarked critical points throughout, or equivalently a homotopy obtained by pre- and post-composing by homeomorphisms that are isotopic to the identity relative to $P$. If we additionally include isotopy classes orientation-preserving homeomorphisms of $(\mathbb{R}^2,P)$ as ``topological polynomials’’ of degree one, the resulting algebraic structure is a monoid $\mathcal{TP}(\mathbb{R}^2,P)$, where the invertible elements are precisely the elements of the mapping class group $\Mod(\mathbb{R}^2,P)$. The isomorphism type of $\mathcal{TP}(\R^2,P)$ depends only on the cardinality of $P$, so we denote this monoid by $\mathcal{TP}_n$ for $|P|=n$.

The lifting maps $\lambda_f$ define a simplicial action of the monoid $\mathcal{TP}_n$ on the tree complex~$\T_n$.  This action restricts to the natural action of $\mathrm{Mod}_n$ on~$\T_n$. Note that two elements of $\mathcal{TP}_n$ are Thurston equivalent if and only if they are conjugate by an element of $\Mod_n$, so our algorithm can be viewed as a solution to the conjugacy problem in the monoid~$\mathcal{TP}_n$.

We learned about this viewpoint from Kevin Pilgrim, who has drawn many connections between the theory of $\mathcal{TP}_n$ and the theory of mapping class groups \cite{pilgrim_notices}.  As he suggests, the monoid $\mathcal{TP}_n$ has been underemphasized in the field, in favor of the set of Thurston equivalence classes.


\subsection{The tree lifting algorithm}
\label{sec:algorithms}

In this section we explain how to apply Theorem~\ref{thm:main} to give an algorithm for the topological polynomial decision problem.  We call our algorithm the tree lifting algorithm.  We begin by explaining a simplified version of the algorithm, which determines whether or not a topological polynomial is equivalent to a polynomial and outputs either a topological Hubbard tree or a Levy cycle.  Afterward, we explain how to modify the algorithm so that it outputs either a topological Hubbard tree or the canonical obstruction.

Suppose we are given a post-critically finite topological polynomial $f$ with $|P_f|=n$.  The steps of the simplified algorithm are as follows.
\begin{enumerate}
    \item Choose some vertex $T$ of $\T_n$.\smallskip
    \item Check if any element of the 2-neighborhood of $T$ is the topological Hubbard tree for~$f$ by checking if it is invariant and if it satisfies Poirier's conditions.  If it is the topological Hubbard tree, the algorithm outputs this tree and terminates.\smallskip
    \item Check if any tree in the 1-neighborhood of $T$ has a sub-tree whose boundary is a curve of a Levy cycle.  If so, the algorithm outputs that Levy cycle and the algorithm terminates.\smallskip
    \item Replace $T$ with $\lambda_f(T)$ and return to Step 2.
\end{enumerate}
Because of the local finiteness of $\T_n$, Steps 2 and 3 are finite checks.  By Theorem~\ref{thm:main} the algorithm terminates.  

We now explain how to modify the algorithm so that it outputs the canonical obstruction in the obstructed case.  Any topological polynomial $f$ has a Hubbard vertex $H_f$ in the augmented complex $\hat \T_n$ which is invariant under lifting, and we show in Proposition~\ref{prop:PoirierConditionsBubbleTree} how to generalize Poirier's conditions to give a recognition algorithm for~$H_f$.  When $f$ is obstructed, the Hubbard vertex $H_f$ includes the canonical obstruction as part of its definition, and it lies in the 1-neighborhood in $\hat\T_n$ of any vertex in the Levy set~$L_f$.  Thus we can use the following algorithm.
\begin{enumerate}
    \item Choose some vertex $T$ of $\T_n$.\smallskip
    \item Check if any element of the restricted 2-neighborhood of $T$ is the Hubbard vertex for~$f$ by checking if it is invariant and if it satisfies the conditions of Proposition~\ref{prop:PoirierConditionsBubbleTree}.  If it is the Hubbard vertex, the algorithm outputs the Hubbard tree or canonical obstruction and terminates.\smallskip
    \item Replace $T$ with $\lambda_f(T)$ and return to Step 2.
\end{enumerate}
Here the restricted 2-neighborhood is the 1-neighborhood in $\hat\T_n$ of the 1-neighborhood of $T$ in $\T_n$.  This restricted 2-neighborhood is always finite, so step~2 is a finite check.

We emphasize that the tree lifting algorithm as stated is only one way to convert our Theorem~\ref{thm:main} into an algorithm; there are many improvements one could make by examining the inner workings of the proof of Theorem~\ref{thm:main}.  For example, in Section~\ref{sec:pfobs} we give two successive refinements of Theorem~\ref{thm:main} that result in improvements in the tree lifting algorithm.


\subsection{Applications}
\label{sec:apps}

We explain here two applications of our methods, namely, to Pilgrim's finite global attractor conjecture and to Hubbard's twisted rabbit problem.

\p{Application to the finite global attractor conjecture} As above, a topological polynomial is obstructed if and only if it has a Levy cycle.  More generally, Thurston proved that a post-critically finite branched cover of the sphere is obstructed if and only if it has (what is now called) a Thurston obstruction, a multicurve that is invariant under pullback (and also satisfies an additional combinatorial property).  It is therefore a fundamental problem to understand the behavior of the set of multicurves under pullback.  Pilgrim's global attractor conjecture addresses one of the most basic aspects of this problem.

Let $f$ be a branched cover of the sphere with finite post-critical set $P$.  Let $\M_f$ denote the set of isotopy classes of multicurves in $S^2 \setminus P$; here a multicurve is a set of pairwise disjoint, pairwise non-homotopic essential simple closed curves.  We emphasize that $\M_f$ includes the empty multicurve.  Similar to the lifting map $\lambda_f : \T_n \to \T_n$, there is a lifting map $\lambda_f : \M_f \to \M_f$ (in the lift we discard inessential components and all but one component from each parallel family).  Pilgrim's conjecture is that if $f$ is (Thurston equivalent to) a rational map and is not a flexible Latt\`es example, then $\lambda_f : \M_f \to \M_f$ has a finite nucleus, that is, a finite subset to which all  elements of $\M_f$ are attracted (Pilgrim refers to a nucleus as a global attractor);  this conjecture appeared in a lecture by Pilgrim \cite{talk}, and later in a paper by Lodge \cite[Section 6]{Lodge}.  

Let $f$ be a polynomial whose post-critical set $P$ consists of $n$ points.  For a vertex $T$ of $\T_n$, let $\M(T)$ be the set of $M \in \M_f$ so that $M$ is the boundary of a neighborhood of a subforest of $T$.  Let $\{T_1,\dots,T_N\}$ be the vertices of any finite nucleus for~$f$ in $\T_n$ (it follows from Theorem~\ref{thm:main}(1) that this finite set exists).  For any vertex $T$ we have $\lambda_f(\M(T)) \subseteq \M(\lambda_f(T))$.  Thus, for $k$ large, the set $\M(\lambda_f^k(T))$ is contained in the union of the $\M(T_i)$.  Each such $\M(T_i)$ is finite, and moreover is algorithmically computable, via the tree lifting algorithm.  We thus have the following corollary of the first statement of Theorem~\ref{thm:main}, which resolves Pilgrim's conjecture in the case of polynomials.

\begin{corollary}
Let $f$ be a post-critically finite polynomial.  Then the lifting relation $\lambda_f$ on $\M_f$ has a finite nucleus.  More specifically, the nucleus is contained in the union of the $\M(T_i)$, which can be computed via the tree lifting algorithm.
\end{corollary}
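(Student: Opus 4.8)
The plan is to derive the corollary from Theorem~\ref{thm:main}(1) by transporting the nucleus for $\lambda_f$ on trees to a nucleus for $\lambda_f$ on multicurves. Since a polynomial is Thurston equivalent to a polynomial, $f$ is unobstructed, so Theorem~\ref{thm:main}(1) applies: the $2$-neighborhood of the Hubbard vertex $H_f$ is a nucleus for $\lambda_f\colon\T_n\to\T_n$, and in particular the finite set $\{T_1,\dots,T_N\}$ of all $\lambda_f$-periodic vertices (which lies in that $2$-neighborhood) is a nucleus in $\T_n$. We will combine this with two facts whose proofs belong to the body of the paper. First, for each vertex $T$ of $\T_n$ the set $\M(T)$ is finite and is algorithmically computable from $T$: a finite tree has only finitely many subforests, and reading off the boundary multicurve of a regular neighborhood of a subforest (discarding inessential and redundant parallel components) is a finite combinatorial procedure. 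Second, the two lifting operations are compatible with the assignment $T\mapsto\M(T)$, that is, $\lambda_f(\M(T))\subseteq\M(\lambda_f(T))$ for every vertex $T$; intuitively, if a multicurve bounds a neighborhood of a subforest $F$ of $T$ then its lift bounds a neighborhood of the corresponding subforest of the lift of $T$.

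The substantive point that does not follow formally from what precedes is that the sets $\M(T)$ cover $\M_f$: every multicurve $M$ in $S^2\setminus P$ satisfies $M\in\M(T)$ for some vertex $T$ of $\T_n$. The plan for this is a direct construction. The multicurve $M$ cuts the plane into complementary regions; working from the innermost regions outward, one chooses inside each region a tree through the marked points it contains and then splices these together across the curves of $M$, taking care that the resulting spanning tree $T$ of $P$ has the property that each curve of $M$ is the boundary of a regular neighborhood of a (connected piece of a) subforest of $T$. Arranging this simultaneously for all curves of $M$ — so that a single subforest of $T$ realizes the whole multicurve — is where the real work lies, and is the step I expect to be the main obstacle; it is also where the planarity of the picture (equivalently the hypothesis that $f$ is a polynomial, so that we never leave the plane) is essential.

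Granting these ingredients the corollary is immediate. Given $M\in\M_f$, pick a vertex $T$ of $\T_n$ with $M\in\M(T)$. Iterating the compatibility relation gives $\lambda_f^k(M)\subseteq\M(\lambda_f^k(T))$ for all $k\ge 0$. Because $\{T_1,\dots,T_N\}$ is a nucleus in $\T_n$, there is a $k_0$ (depending on $T$) with $\lambda_f^k(T)\in\{T_1,\dots,T_N\}$ for all $k\ge k_0$, and hence $\lambda_f^k(M)\subseteq\bigcup_{i=1}^N\M(T_i)$ for all $k\ge k_0$. The set $\bigcup_{i=1}^N\M(T_i)$ is a finite union of finite sets, so it is finite, and every element of $\M_f$ is attracted into it; thus it contains a finite nucleus for the lifting relation $\lambda_f$ on $\M_f$. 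For the algorithmic statement, one runs the tree lifting algorithm starting from any vertex to produce the periodic vertices $T_1,\dots,T_N$ inside the $2$-neighborhood of $H_f$, and then lists each $\M(T_i)$ by enumerating the subforests of $T_i$ and computing their boundary multicurves.
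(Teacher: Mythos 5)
Your proposal is the paper's own argument: the corollary is deduced in the paragraph immediately preceding its statement from exactly the ingredients you list --- Theorem~\ref{thm:main}(1) supplies the finite set $\{T_1,\dots,T_N\}$ of periodic vertices, each $\M(T_i)$ is finite and computable, and $\lambda_f(\M(T))\subseteq \M(\lambda_f(T))$ --- and the paper is, if anything, terser than you are: the coverage step (every $M\in\M_f$ lies in some $\M(T)$) is not addressed there at all. You are right to single that step out, and the difficulty you anticipate is genuine under the literal reading of the definition: a regular neighborhood of a subforest is a disjoint union of disks, so its boundary is always an \emph{un-nested} multicurve, and hence no single subforest of any tree can realize a nested multicurve. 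The resolution is to read membership in $\M(T)$ curve by curve --- each curve $c$ of $M$ is the boundary of a neighborhood of a subtree $F_c$ of $T$ --- after which your innermost-to-outermost construction works with no simultaneity problem: build $T$ so that, for each $c$, the convex hull in $T$ of the marked points enclosed by $c$ lies inside the disk bounded by $c$; these subtrees are then nested in the same pattern as the curves and each realizes its own curve. (For un-nested $M$ one may take $F=\bigcup_c F_c$ and recover the single-subforest statement, which is the only case needed elsewhere in the paper, e.g.\ for the Levy set, since the canonical obstruction is un-nested.) One further small point neither you nor the paper spells out: in verifying $\lambda_f(\M(T))\subseteq\M(\lambda_f(T))$, a component $\tilde F$ of $f^{-1}(F_c)$ need not lie in $\lambda_f(T)$; one replaces it by the convex hull of its marked points, which does, and the collapse of $\tilde F$ onto that hull crosses no marked points, so the boundary curves are isotopic. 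With these clarifications your proof is complete and coincides with the paper's.
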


Pilgrim has previously verified the finite global attractor conjecture in two cases: (1) where $f$ is a post-critically finite branched cover over the sphere whose associated virtual endomorphism on mapping class groups is contracting \cite[Theorem 1.4]{pilgrim12}, and (2) where $f$ is a quadratic polynomial with periodic critical point \cite[Corollary 7.2]{pilgrim12}.  He and Lodge also verified the conjecture for three specific quadratic polynomials \cite[Theorems 1.6, 1.7, 1.8]{pilgrim12}.  Additionally, Kelsey and Lodge verified the conjecture for all quadratic non-Latt\`es maps with four post-critical points.  The work of Nekrashevych discussed after the statement of Theorem~\ref{thm:main} implies the conjecture for the case of hyperbolic polynomials.  Finally, Hlushchanka proved the conjecture for critically fixed rational maps \cite{mh}.  

\p{Applications to twisted polynomial problems} In the early 1980s, Hubbard posed the so-called twisted rabbit problem, in part to emphasize how little was understood about the topological polynomial decision problem. If we post-compose, say, the rabbit polynomial $R$ with a homeomorphism $h$ of $\R^2$ fixing $P_R$ pointwise, we obtain a new topological polynomial $h R$ with the same dynamics on $P_R$ (throughout, we suppress the symbol for composition of a topological polynomial with a homeomorphism).  Such a topological polynomial cannot have a Levy cycle (by the Berstein--Levy theorem \cite{hubbard}), and so $h R$ is Thurston equivalent to $R$, $C$, or $A$.  Let $D_x$ be the (left-handed) Dehn twist about the curve $x$ in Figure~\ref{fig:generators} and let $m \in \Z$.  Hubbard's problem is: \emph{determine the Thurston equivalence class of $D_x^m R$ as a function of $m$.  In other words, determine the corresponding function $\Z \to \{R,C,A\}$.}

In 2006, Bartholdi--Nekrashevych solved the twisted rabbit problem \cite{BaNe}.  Their approach is to associate an algebraic object, called an iterated monodromy group, to a topological polynomial, and to show that this iterated monodromy group has a nucleus (similarly to how our lifting maps have nuclei).  The nucleus is a finite state automaton (in particular, it is a finite amount of data) that completely describes the Thurston equivalence class of the topological polynomial.  The nucleus is computable, and so this method solves the recognition problem for topological polynomials.  In particular, it solves the twisted rabbit problem.  They also give an explicit formula in terms of $m$ for whether $D_x^m R$ is equivalent to $R$, $C$, or $A$ (see Section~\ref{sec:rabbit}).  In their paper, Bartholdi--Nekrashevych also apply their methods to several variations of the twisted rabbit problem, by changing the original polynomial and/or the twisting homeomorphism.  In all of their examples, the size of the post-critical set is 3.

In Section~\ref{sec:onear} we apply our tree lifting algorithm to give a new solution to Hubbard's twisted rabbit problem.  In place of iterated monodromy groups we use the Alexander method, mentioned above.

In Section~\ref{sec:manyear} we give a generalization of the twisted rabbit problem to the case where there are $n$ post-critical points.  One feature of our method is that the generalization to $n$ post-critical points is readily apparent from the picture for the case $n=4$.  

Finally, in Sections~\ref{sec:z2+i} and~\ref{sec:genz2+i} we consider twistings of the polynomial $I(z) = z^2+i$ and of certain generalizations $I_n(z)$ that have $n$ post-critical points.  The critical point of each $I_n(z)$ has pre-period $n-2$ and period 2.  As such, the Berstein--Levy theorem does not apply, and in fact there are obstructed twistings of $I_n(z)$ for each $n$.  Bartholdi--Nekrashevych already gave an algorithm for determining the Thurston equivalence class of every twisting of $I=I_3$ by a pure mapping class \cite[Section 6]{BaNe}.  As they show, the result can be $I$, the polynomial $\bar I(z) =z^2-i$, or one of infinitely many distinct obstructed maps (which they completely catalog).  

In order to describe the answers to the twisted $z^2+i$ problem and its generalizations, we introduce in Section~\ref{sec:canonical form} a normal form for obstructed topological polynomials that we call the canonical form.  The canonical form is analogous to the Nielsen--Thurston normal form in the theory of mapping class groups.  The canonical form is a complete topological description of a given map into canonical pieces.  In particular, it carries more information than the collection of first-return maps.  In the case of mapping class groups, the analogous ``canonical form'' is not canonical (see the discussion in Section~\ref{sec:z2+i}); and so the fact that the pieces here are canonical is a novel feature.  


\subsection{Comparisons to prior works}
\label{sec:comp}

There have been many works on the decision problem for post-critically finite topological polynomials, and more generally, for post-critically finite branched covers of the sphere.

As discussed after the statement of Theorem~\ref{thm:main}, Nekrashevych defined for a topological polynomial $f$ with $n$ post-critical points a cell complex $\widetilde D_n$ and a map $\Phi : \widetilde D_n \to \widetilde D_n$ that is closely related to our tree lifting map.  The points of $\widetilde D_n$ are called metric cactus diagrams.  Nekrashevych mentions that $\Phi$ is contracting when $f$ is hyperbolic \cite[Proof of Theorem 7.2]{nek_csa}; the details are omitted, but it is a straightforward argument \cite{nekper}.  As such, there is a finite nucleus for $f$ in $\widetilde D_n$, and so this gives an algorithm for the recognition of topological polynomials in the special case of hyperbolic polynomials.  This approach gives finer information than Theorem~\ref{thm:main} in that the contraction on $\widetilde D_n$ is exponential.  However, it is unclear if this method can be extended to all topological polynomials.  
 
Our work is also closely related to the work of Bartholdi--Nekrashevych described above \cite{BaNe}.  One might hope that there is a way to translate between the minimal nuclei for our tree lifting map and the nuclei for their iterated monodromy groups.  A na\"ive guess would be that the elements of the fundamental group appearing in their nuclei are the ones represented by loops intersecting the topological Hubbard tree in at most one point.  For the rabbit, co-rabbit, and airplane polynomials, this is indeed the case, once we pass from $\pi_1(\R^2 \setminus P)$ to $\pi_1(S^2 \setminus P)$ (by adding a point at infinity).  It would be interesting to know if this correspondence holds in general.  In addition, the 

Hubbard--Schleicher \cite{spider} describe the spider algorithm, which uses an iterated lifting procedure on tuples of points in $\mathbb{C}$, and isotopy classes of arcs from $\infty$ to the these points, in order to find the coefficients of a post-critically finite unicritical polynomial from a given combinatorial description. This is complementary to our tree-lifting algorithm, which starts with a topological description of a map and obtains a combinatorial description.  Given an unobstructed post-critically finite unicritical topological polynomial, one can use our algorithm to determine the combinatorics of the Hubbard tree for the polynomial in its Thurston class and then use the spider algorithm to find the coefficients of this polynomial.

D.\ Thurston studies the case of post-critically finite branched covers of the sphere where each cycle of post-critical points contains a critical point \cite{thurston_positive}.  He gives a positive characterization for such a map to be equivalent to a rational map.  His criterion involves the existence of an elastic graph that stretches under iteration of the corresponding lifting map.  This result should be viewed as complementary to W.\ Thurston's theorem discussed earlier.

Bartholdi--Dudko \cite{BD} have written a series of papers that prove the decidability of the Thurston equivalence of pairs of post-critically finite branched covers of the sphere.  They also give an algorithm to determine whether an unobstructed branched cover $S^2\rightarrow S^2$ is rational.  Bartholdi--Dudko describe branched coverings
of the sphere in terms of group-theoretical objects called bisets.  The resulting algorithms have a symbolic nature.  Also, they involve floating point calculations as well as manipulations of triangulations on the sphere. Their algorithms have been implemented in the software package Img within the computer algebra system GAP.

Nekrashevych uses the theory of bisets to give a ``combinatorial spider algorithm" that classifies post-critically finite topological polynomials by their bisets \cite{nek_csa}.  This algorithm is, however, not known to terminate.

Utilizing the work of Bartholdi--Dudko and Bartholdi--Nekrashevych, Kelsey--Lodge enumerate the Thurston equivalence classes of branched covers of the sphere of degree 2 with at most 4 post-critical points \cite{KL}.  

Shepelevtseva--Timorin \cite{ST} define invariant spanning trees for quadratic rational maps as a tool for classifying post-critically finite branched covers of the sphere of degree 2.  As in our paper, they have a scheme where they iteratively lift trees in order to search for an invariant tree.  Their process is similar to ours, but they do not prove that their process converges.  One of their results appears in Section~\ref{sec:poirier} below.  As in the work of Bartholdi--Dudko, their proofs are phrased in terms bisets.

Bonnot--Braverman--Yampolsky \cite{BBY} prove that it is decidable whether or not a post-critically finite branched cover of the sphere is equivalent to a rational map \cite{BBY}.  Like Bartholdi--Dudko, they work directly with triangulations of the sphere.  Their algorithm involves two parallel exhaustive searches, one searching for a Thurston obstruction, and one searching for an equivalent rational map.

Building on the work of Bonnot--Braverman--Yampolsky, Selinger--Yampolsky give an algorithm that finds the canonical obstruction for a post-critically finite branched cover of the sphere \cite{SY}. Recent work of Rafi--Selinger--Yampolsky \cite{RSY} pairs the improved algorithm of Selinger--Yampolsky for detecting obstructions with an improved algorithm for detecting Thurston equivalence of rational maps.  The improvements are obtained by applying known algorithms for the conjugacy problem in the mapping class group.   

Cannon--Floyd--Parry--Pilgrim \cite{CFPP} focus attention on a special subset of post-critically finite branched covers of the sphere they call  nearly-Euclidean Thurston maps (NET maps).  A NET map is a post-critically finite branched cover of the sphere with exactly 4 post-critical points and the property that each critical point has local degree 2.   Floyd--Parry--Pilgrim proved that rationality is decidable for NET maps \cite {FPP}.  They leverage the near-Euclidean behavior of the maps to find an upper bound on the slope of an obstruction.  In a separate paper, Floyd--Parry--Pilgrim provide an algorithm for constructing dynamic portraits for NET maps and they classify dynamic portraits of degree up to 30; see \cite{FPP17}.

Our tree lifting algorithm has some important qualitative differences from the above works:
\begin{enumerate}
    \item It applies to all post-critically finite topological polynomials. \smallskip
    \item It does not require an exhaustive search.\smallskip
    \item It gives recognition of topological polynomials, not just comparison.\smallskip
    \item It gives the conjugating map between two equivalent topological polynomials, not just the fact that they are equivalent.\smallskip
    \item It effectively computes a primary invariant, namely the Hubbard tree, rather than a secondary invariant, such as a  biset or an iterated monodromy group.\smallskip
    \item It is coordinate free, unlike algebraic methods which require choosing a basis for a biset or a generating set for a group.
\end{enumerate}
We suspect that our algorithm runs in polynomial time (possibly even quadratic time), and that it can be implemented effectively.   Our algorithm is no doubt more efficient than an algorithm that simply lists and checks all possible isotopy classes of trees and all possible obstructions.

One shortcoming of our tree lifting algorithm is that it does not have an immediate extension to the case of rational maps because it relies on the existence (and theory of) Hubbard trees.  However, there has been work on invariant trees for special classes of post-critically finite branched covers of the sphere: by Shepelevtseva--Timorin for quadratic rational maps~\cite{ST} and by Hlushchanka for expanding rational maps \cite{Hlushchanka2017}. Their work may provide a framework for generalizing our tree lifting algorithm.  

Finally, we have recently learned of work in preparation by Ishii--Smillie wherein they give an algorithm for computing the homotopy class of the Hubbard tree for the class of post-critically finite expanding polynomials in terms of iterated pullbacks of loops in the sphere.

\p{Outline of the paper} We begin in Section~\ref{sec:complex} by introducing the tree complex $\T_n$, the augmented tree complex $\hat \T_n$, and the associated lifting maps~$\lambda_f$.  We also prove there that $\T_n$ and $\hat \T_n$ are contractible. In Sections~\ref{sec:poirier} and~\ref{sec:obs} we prove the first and second statements of Theorem~\ref{thm:main}, respectively.  Finally, in Section~\ref{sec:rabbit} we explain how to use our tree lifting algorithm to solve Hubbard's original twisted rabbit problem, our generalization to the case of $n$ post-critical points (Theorem~\ref{thm:many}), and the generalized twisted $z^2+i$ problem (Theorem~\ref{thm:I}).  

\p{Acknowledgments} We thank Nicol\`as Alvarado, Mladen Bestvina, Benson Farb, Bill Floyd, Mikhail Hlushchanka, Chris Hruska, Sarah Koch, Chris Leininger, Volodymyr Nekrashevych, Kevin Pilgrim, Roberta Shapiro, Dylan Thurston, and several anonymous referees for helpful comments and conversations.  We are also grateful to Eko Hironaka and Sarah Koch for organizing a conference called ``Braids and Rational Maps: An Informal Gathering'' at Harvard University in August 2017, where our collaboration was begun.  We are also grateful to ICERM for hosting a conference called ``Algorithms in Complex Dynamics and Mapping Class Groups'' in November 2019, which helped the development of the paper.  The second author thanks the School of Mathematics and Statistics at St Andrews for their hospitality during a research visit.  The first author was supported by EPSRC grant EP/R032866/1 and the National Science Foundation under Grant No.\ DMS - 1854367. The second author was supported by the National Science Foundation under Grant No.\ DGE - 1650044.  The third author was supported by the National Science Foundation under Grant No.\ DMS - 1745583.  The fourth author was supported by the National Science Foundation under Grant No.\ DMS - 2002951.  


\section{The complex of trees and the lifting map}
\label{sec:complex}

The goal of this section is to introduce some of the main objects of study in this paper.  Specifically, we define
\begin{enumerate}
    \item the complex of trees $\T_n$,
    \item the space of metric trees $\Y_n$,
    \item the augmented complex of trees $\hat \T_n$, and
    \item the lifting maps $\lambda_f : \T_n \to \T_n$ and $\lambda_f : \hat \T_n \dasharrow \hat \T_n$. 
\end{enumerate}
We accomplish these goals in four corresponding subsections below.  The space $\Y_n$ is introduced mainly as an auxiliary object; it is used to prove that $\T_n$ is contractible (Proposition~\ref{prop:tnc}).  Specifically, we first show that $\Y_n$ is homeomorphic to Teichm\"uller space (Proposition~\ref{prop:teich}), which is a contractible space.   We then show that  $\T_n$ is a spine for $\Y_n$, that is, a subspace to which $\Y_n$ deformation retracts (Proposition~\ref{prop:tnc}).  

As mentioned in the introduction, the complex $\T_n$ is closely related to a poly-simplicial complex $\widetilde D_n$ defined by Nekrashevych \cite[Section 7.6]{nek_csa}.  One point of distinction is that the complex $\widetilde D_n$ does not seem to be directly related to a subdivision of Teichm\"uller space.

In geometric group theory, there are many analogues of the above objects, such as the curve complex, the arc complex, and Teichm\"uller space.  For instance, in their work on quadratic differentials, Hubbard and Masur \cite{HM}  constructed a simplicial complex of trees that is related to $\T_n$.  Our work is in particular inspired by the theory of outer space, a simplicial complex defined by Culler--Vogtmann in their study of the automorphism group of a free group \cite{cv}.


\subsection{The complex of trees}\label{sec:trees}  Before defining the complex $\T_n$ we first specify precisely what we mean by an isotopy class of trees.  Throughout this section, $P \subseteq \R^2$ is a set with $n$ elements; we refer to $P$ as a set of marked points. 

\p{Trees and isotopy} By a \emph{tree} in $(\R^2,P)$ we mean an embedding $\varphi$ of an abstract tree $T_0$ into $\R^2$ with the following three properties:
\begin{enumerate}
    \item the set $P$ is contained in the set $\varphi(T_0)$,
    \item the set $\varphi^{-1}(P)$ is contained in the set of vertices of $T_0$, and
    \item the set of vertices of $T_0$ with valence at most 2 is contained in $\varphi^{-1}(P)$.
\end{enumerate}
Let $T = \varphi(T_0)$ be a tree in $(\R^2,P)$.  We refer to the images of the vertices and edges of $T_0$ as the vertices and edges of $T$.  Some examples of trees in $(\R^2,P)$ are given in Figure~\ref{fig:ManyAllowedTrees}. In our diagrams, marked points are colored red.  We say that two trees in $(\R^2,P)$ are \emph{isotopic} if they are isotopic (as maps) through trees in $(\R^2,P)$.  

Let $T$ be a tree in $(\R^2,P)$ and let $F$ be a subforest of $T$ with the property that each component of $F$ contains at most one point of $P$; such a forest is said to be \emph{collapsible}.  We may form a new tree $T'=T/F$ in $(\R^2,P)$ by collapsing each component of $F$ to a single point.  We say that $T'$ is obtained from $T$ by a \emph{forest collapse}.  We may equivalently say that $T$ is obtained from $T'$ by a \emph{forest expansion}.  Forest collapses and expansions are well-defined operations for isotopy classes of trees.

\begin{figure}[t]
\centering
$\begin{array}{c@{\qquad}c@{\qquad}c}
\includegraphics{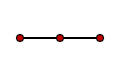} & \includegraphics{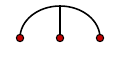} & \includegraphics{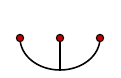} \\[16pt]
\includegraphics{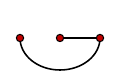} & \includegraphics{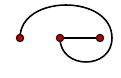} & \includegraphics{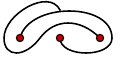}
\end{array}$
\caption{Several trees in $(\R^2,P)$ for $P=\{-1,0,1\}$}
\label{fig:ManyAllowedTrees}
\end{figure}

\p{Trees as arc systems} There is an alternative way to describe a tree in $(\R^2,P)$, in terms of arc systems.  

First, an \emph{arc based at infinity} is the image of a proper, simple embedding of $(0,1)$ into $\R^2$ that avoids $P$ (in particular, all arcs connect infinity to itself).  Such an arc is \emph{essential} if it is not isotopic to infinity through arcs based at infinity. An \emph{arc system} in $(\R^2,P)$ is a collection of essential arcs based at infinity that are pairwise disjoint and pairwise non-isotopic.  An arc system in $(\R^2,P)$ is \emph{filling} if each complementary region is a disk with at most one marked point.  

There is a natural bijection between the set of isotopy classes of trees in $(\R^2,P)$ and the set of isotopy classes of filling arc systems in $(\R^2,P)$.  Given a tree $T$ in $(\R^2,P)$, a corresponding arc system has one arc $\alpha$ for each edge $e$ of $T$; specifically, $\alpha$ is an arc that crosses $e$ in one point and is disjoint from $T$ otherwise. 

Collapsing a forest in a tree $T$ corresponds to deleting arcs in the corresponding arc system.  See Figure~\ref{fig:TreeDual} for an example; the tree on the right is obtained by contracting the middle edge in the tree on the left.

\begin{figure}[t]
\centering
\includegraphics{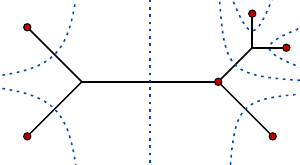}\qquad\qquad\qquad
\includegraphics{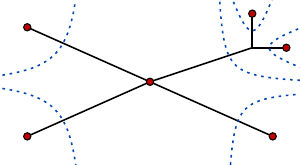}
\caption{Two trees in $(\R^2,P)$ and their dual arcs}
\label{fig:TreeDual}
\end{figure}

\p{Definition of the complex of trees} The \emph{complex of trees} $\T_n$ is the simplicial complex defined as follows.  The vertices of $\T_n$ are the isotopy classes of trees in $(\R^2,P)$.  A set of vertices $\{T_0,\dots,T_k\}$ spans a $k$-simplex if (up to relabeling) for each $i > 0$ the vertex $T_i$ is obtained from $T_{i-1}$ by a forest collapse (equivalently, for $i < j$ the vertex $T_j$ is obtained from $T_i$ by a forest collapse).  The complex of trees $\T_3$ is isomorphic to an infinite $3$-regular tree. A portion of $\T_3$ is illustrated in Figure~\ref{fig:TreeComplex3}.

\begin{figure}[h]
\centering
\includegraphics{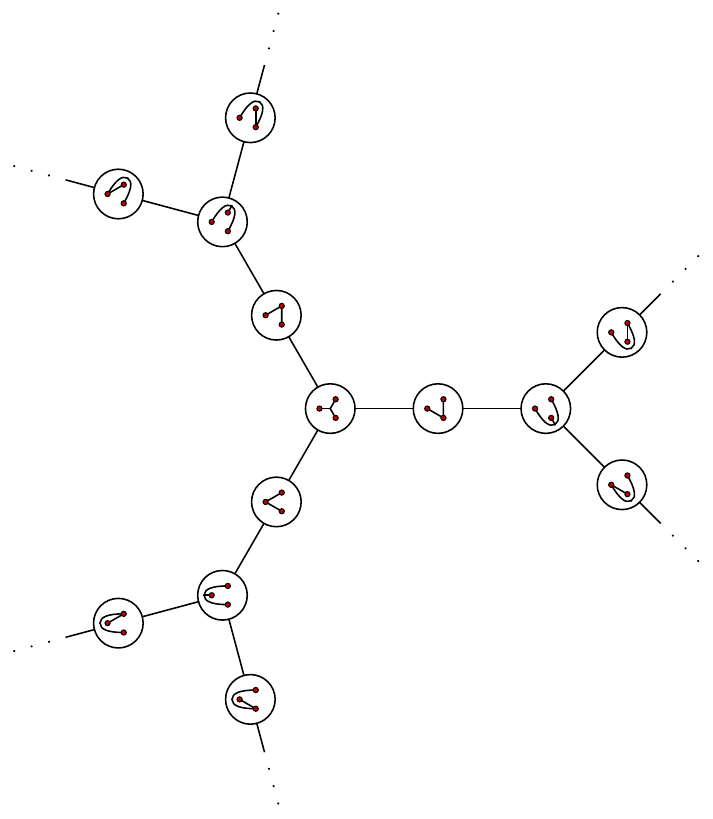}
\caption{A portion of $\T_3$}
\label{fig:TreeComplex3}
\end{figure}

A priori the complex $\T_n$ depends on the choice of $P$.  However, if $P'$ is another subset of $\R^2$ with $|P'|=|P|$ then a homeomorphism $(\R^2,P) \to (\R^2,P')$ induces an isomorphism between the corresponding complexes of trees.

\p{Local finiteness and metric balls} Since each tree representing a vertex of $\T_n$ has finitely many vertices and edges, it follows that $\T_n$ is locally finite, that is, the degree of each vertex in the 1-skeleton is finite. 

There is a natural metric on the set of vertices, given by the path metric in the 1-skeleton of~$\T_n$.  Combined with the local finiteness, this means that balls of finite radius contain finitely many vertices.  This finiteness property will be crucial in our proof of Theorem~\ref{thm:main}.

\p{Mapping class group action} Let $P$ be any set of marked points in $\R^2$.  The mapping class group $\Mod(\R^2,P)$ is the group of isotopy classes of the set of orientation-preserving homeomorphisms of $(\R^2,P)$.  The pure mapping class group $\PMod(\R^2,P)$ is the subgroup consisting of elements that fix each point of $P$.

The group $\Mod(\R^2,P)$ acts on $\T_n$ in a natural way: for $h \in \Mod(\R^2,P)$ and $T \in \T_n$, we have that $h \cdot T$ is the point of $\T_n$ represented by the tree $\phi(t)$, where $t$ is a representative of the isotopy class $T$ and $\phi$ is a representative of $h$.


\subsection{The space of metric trees and contractibility}
\label{sec:space}

Our next goal is to show that $\T_n$ is contractible.  The strategy is to show that $\T_n$ can be realized as the spine of a space $\Y_n$ of metric trees, which is itself contractible.  

\p{Metric trees} As above, fix $P \subseteq \R^2$ with $|P|=n$.  A \emph{metric} on a tree $T$ in $(\R^2,P)$ is a function from the set of edges of $T$ to $\R_{\geq 0}$; we refer to the image of an edge as its length.  It makes sense to define a metric on an isotopy class of trees, since an isotopy between trees induces a bijection on the sets of edges. We say that a metric on a tree (or an isotopy class of trees) is \emph{degenerate} if there is a path of edges of length 0 connecting distinct points of $P$.

\p{The complex of metric trees} We will define $\Y_n$ as a sort of cell complex.  The ``cells'' we define will not be compact, and so the result is not a cell complex in the usual sense, but something more general.

For each vertex $T$ of $\T_n$ we consider the set of all nondegenerate metrics on $T$ where the sum of the lengths of the edges is 1.  This set of metrics is a subset of the standard simplex in~$\R^m$, where $m$ is the number of edges of $T$.  
The resulting subset of the standard simplex will be referred to as a \emph{cell}.  Any nonempty intersection of a face of the simplex with a cell will be called a \emph{face} of the cell.  A face of a cell is also a cell; specifically it is the cell corresponding to the tree $T'$ obtained from $T$ by collapsing some edges of $T$.

We form the cell complex $\Y_n$ by starting with the disjoint union of the cells associated to all vertices of $\T_n$.  We identify a face of the cell for a tree $T$ with the cell for the corresponding collapsed tree $T'$.

Figure~\ref{fig:FareyTessellation} illustrates the cell complex $\Y_3$.  Every cell of $\Y_3$ has dimension~$1$ or~$2$, and $\Y_3$ is homeomorphic to an open disk. The simplicial subdivision of this disk is the same as the usual Farey tessellation of the hyperbolic plane.

It is also possible to define $\Y_n$ as a topological space independently of any cell structure: an isotopy class of metric trees induces a length function on the set $\mathcal{X}$ of isotopy classes of arcs in $(\R^2,P)$ between points of $P$.  This length function can be shown to be injective.  The topology on $\Y_n$ is then the topology induced from the product topology on $\R^{\mathcal X}$.  The resulting topology is homeomorphic to the one that $\Y_n$ inherits from the cell structure given above.

\begin{figure}[h]
\centering
\includegraphics{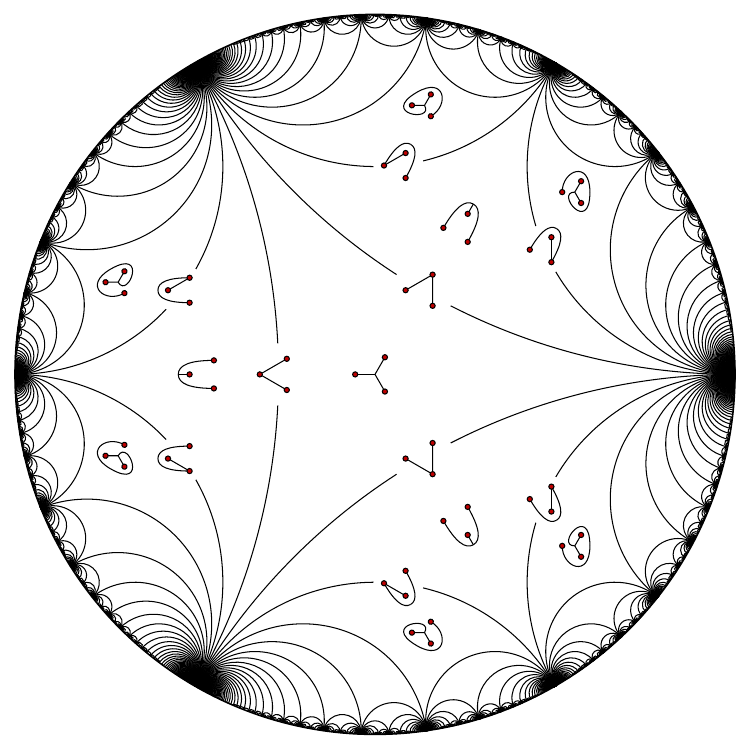}
\caption{The complex $\Y_3$}
\label{fig:FareyTessellation}
\end{figure}

\p{Connection with Teichm\"uller space and contractibility} Our next goal is to show that $\T_n$ is contractible.  To this end, we first show that $\Y_n$ is contractible.  The key is to identify the latter with a subspace of a certain arc complex, as follows.

Let $\A_n$ denote the simplicial complex whose vertices are isotopy classes of essential arcs in $(\R^2,P)$ based at $\infty$, and whose simplices correspond to arc systems, that is, collections of pairwise disjoint isotopy classes of arcs.  Then let $\A_n^\circ$ denote the topological space obtained from the geometric realization of $\A_n$ by deleting the simplices of $\A_n$ corresponding to arc systems that fail to fill $(\R^2,P)$.  Using barycentric coordinates on the simplices, a point of $\A_n^\circ$ can be regarded as a weighted arc system, where we assign a number in $[0,1]$ to each arc. 

The space $\Y_n$ is naturally homeomorphic to $\A_n^\circ$: the weighted arc system corresponding to a tree in $(\R^2,P)$ is given by a collection of arcs transverse to the edges of the tree, with weights inherited from the transverse edges.   More specifically, for each edge of the tree, we take the unique isotopy class of arcs that intersect that edge in one point and are disjoint from the other edges of the tree, and we declare the weight of the arc to be the length of the corresponding edge of the tree.  This process is reversible (this is where we use the fact that the arc systems corresponding to points of $\A_n^\circ$ are filling), whence the homeomorphism. 

Let $\Teich_{0,n+1}$ denote the Teichm\"uller space of a sphere with $n+1$ punctures.  Penner proved that there is a $\Mod(\R^2,P)$-equivariant homeomorphism from $\Teich_{0,n+1}$ to $\A_n^\circ$ \cite[Theorem 1]{penner}.  We thus obtain the following immediate corollary.

\begin{proposition}
\label{prop:teich}
For $n \geq 2$, there is a $\Mod(\R^2,P)$-equivariant homeomorphism from $\Teich_{0,n+1}$ to $\Y_n$.  In particular $\Y_n$ is homeomorphic to $\R^{2n-4}$.
\end{proposition}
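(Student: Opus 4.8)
The plan is to deduce Proposition~\ref{prop:teich} essentially for free from the two identifications developed just above it, namely the homeomorphism $\Y_n \cong \A_n^\circ$ constructed in the preceding paragraph, and Penner's theorem that $\Teich_{0,n+1} \cong \A_n^\circ$ as $\Mod(\R^2,P)$-spaces. Chaining these two homeomorphisms gives a $\Mod(\R^2,P)$-equivariant homeomorphism $\Teich_{0,n+1} \to \Y_n$, which is the first assertion. The only genuine content to check is that the composite is equivariant: Penner's map is equivariant by hypothesis, so it suffices to observe that the homeomorphism $\Y_n \to \A_n^\circ$ is equivariant for the natural actions. This is immediate from the construction: the $\Mod(\R^2,P)$-action on $\Y_n$ is by pushing forward (isotopy classes of) metric trees, the action on $\A_n^\circ$ is by pushing forward weighted arc systems, and the dual-arc-system assignment intertwines these because a homeomorphism of $(\R^2,P)$ carries the arc dual to an edge $e$ of $T$ to the arc dual to the corresponding edge of the image tree, with the same weight. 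One should also note the mild bookkeeping point that Penner's $\A_n^\circ$ is built from arcs connecting the $n+1$ punctures of a sphere, which we identify with arcs in $\R^2$ based at $\infty$ avoiding $P$ by regarding $\infty$ as the $(n+1)$st puncture; this is the same identification already used implicitly in defining $\A_n$.

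For the second assertion, that $\Y_n$ is homeomorphic to $\R^{2n-4}$, I would simply invoke the standard fact that the Teichm\"uller space of a sphere with $n+1$ punctures is homeomorphic to a cell of real dimension $2(n+1)-6 = 2n-4$; for $n \geq 2$ this is a nonnegative number and the space is a point when $n = 2$. (One could alternatively derive contractibility directly, since that is all that is needed downstream for Proposition~\ref{prop:tnc}, but the explicit dimension count costs nothing once Penner's homeomorphism is in hand.) Thus the proof is a two-line assembly: compose the homeomorphism $\Y_n \cong \A_n^\circ$ with Penner's homeomorphism $\Teich_{0,n+1} \cong \A_n^\circ$, remark that both maps are $\Mod(\R^2,P)$-equivariant, and quote the dimension of $\Teich_{0,n+1}$.

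Honestly, there is no real obstacle here: the proposition is stated as an ``immediate corollary'' and the only thing a careful reader wants confirmed is the equivariance of the $\Y_n \cong \A_n^\circ$ identification, which was in effect built in to its construction. If forced to name the trickiest point, it is making sure the two notions of ``arc'' line up (arcs between punctures on $S^2$ versus arcs in the plane based at $\infty$) so that Penner's complex and our $\A_n^\circ$ are literally the same space with the same mapping class group action; but this is the same puncture-at-infinity dictionary used throughout the section, so it requires only a sentence of acknowledgment rather than an argument.
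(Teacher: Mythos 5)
Your proof is correct and follows exactly the paper's route: the paper derives the proposition as an immediate corollary of the homeomorphism $\Y_n \cong \A_n^\circ$ together with Penner's $\Mod(\R^2,P)$-equivariant homeomorphism $\Teich_{0,n+1} \cong \A_n^\circ$, with the dimension count coming from the standard formula for $\Teich_{0,n+1}$. Your additional remarks on equivariance and the puncture-at-infinity identification are sensible bookkeeping that the paper leaves implicit.
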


In this paper the only consequence of Proposition~\ref{prop:teich} that we use is the fact that $\Y_n$ is connected.

\p{Contractibility of the complex of trees} Our complex $\T_n$ can be regarded as a \emph{$\Mod(\R^2,P)$-equivariant spine} for the space $\Y_n$, by which we mean that $\T_n$ is a subspace of $\Y_n$ which is invariant under the action of $\Mod(\R^2,P)$ and onto which $\Y_n$ deformation retracts in a $\Mod(\R^2,P)$-equivariant fashion.  We can regard $\T_n$ as the subset of $\Y_n$ consisting of isotopy classes of metric trees with the following property: if we scale the metric so that the maximal length of an edge is 1, then the set of edges with length strictly less than 1 forms a subforest where each component contains at most one element of $P$.  

Identifying $\T_n$ as a subset of $\Y_n$ as above, we can realize $\T_n$ as a $\Mod(\R^2,P)$-equivariant deformation retract of $\Y_n$.  In order to retract an arbitrary point $T$ of $\Y_n$ to $\T_n$ we choose the largest $\ell \in [0,1]$ so that there is no path in $T$ that connects two vertices of $P$ and only traverses edges of length less than $\ell$.  We then build a new point $T'$ of $\Y_n$ by changing all lengths in $T$ in $[\ell,1]$ to 1 and then rescaling so that the sum of the lengths of the edges is 1.  We then form the linear interpolation between $T$ and $T'$.  This process describes the desired retraction.  In summary we have the following proposition.

\begin{proposition}
\label{prop:tnc}
For $n \geq 2$, there is a natural embedding of the complex $\T_n$ as a $\Mod(\R^2,P)$-equivariant spine in $\Y_n$.  In particular, $\T_n$ is contractible.  
\end{proposition}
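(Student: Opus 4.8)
The plan is to realize $\T_n$ as a subspace of $\Y_n$ and to exhibit an explicit $\Mod(\R^2,P)$-equivariant (strong) deformation retraction of $\Y_n$ onto it. Since $\Y_n$ is homeomorphic to $\R^{2n-4}$ by Proposition~\ref{prop:teich}, it is contractible, and hence so is any deformation retract of it; this gives the ``in particular'' part. So everything reduces to two tasks: (i) making precise the embedding of the simplicial complex $\T_n$ into $\Y_n$ and identifying its image with the set of metric trees described just before the statement, and (ii) verifying that the retraction sketched there is well-defined, continuous, and equivariant, and restricts to the identity on $\T_n$.

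For (i), to a vertex $T$ of $\T_n$ -- an isotopy class of trees with, say, $m$ edges -- I would assign the metric giving every edge length $1/m$, an interior point of the cell of $\Y_n$ labeled by $T$. To a $k$-simplex $\{T_0,\dots,T_k\}$, with each $T_i$ a forest collapse of $T_{i-1}$, I would assign the geometric simplex spanned by the corresponding $k+1$ points; since the cell of each $T_j$ is a face of the closure of the cell of $T_0$ in $\Y_n$, these points all lie in one closed cell, where their convex hull is defined. I would then check that this carries faces to faces (a face of such a simplex corresponds to passing to a subchain of forest collapses) and is injective, so that it is a simplicial embedding, and that the resulting PL subcomplex is exactly the subset of $\Y_n$ singled out before the statement: those metric trees for which, after rescaling so the longest edge has length $1$, the edges of length strictly less than $1$ form a collapsible subforest. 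The one genuinely combinatorial point here is that a collapsible subforest can be collapsed one edge at a time through collapsible subforests (each component keeps at most one marked point under an internal edge collapse), so that each point of this subset does land in some flag-simplex. Equivariance is automatic because $\Mod(\R^2,P)$ acts on $\T_n$ and on $\Y_n$ by the identical rule on underlying trees.

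For (ii), given a point of $\Y_n$ with underlying tree $\tau$, rescale so the longest edge has length $1$ and let $\ell$ be the minimum, over pairs of marked points $p,q$, of the largest edge length along the unique arc in $\tau$ from $p$ to $q$. This is precisely the largest threshold for which no edge-path using only edges of length $<\ell$ joins two points of $P$; it is attained, and it depends continuously on the edge lengths within each cell of $\Y_n$. Forming $T'$ by changing every length in $[\ell,1]$ to $1$ and rescaling to total length $1$, the non-maximal edges of $T'$ form a collapsible subforest by the choice of $\ell$, so $T'$ lies in $\T_n$; all edge lengths stay positive, so the straight-line homotopy from $T$ to $T'$ remains in the (convex) open cell of $\tau$, hence in $\Y_n$. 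On $\T_n$ one has $\ell=1$ and $T'=T$, so the homotopy is stationary there. The step I expect to be the main obstacle is gluing these cellwise formulas into a globally continuous map on all of $\Y_n$: one must check that $\ell$ and $T'$ vary continuously as one approaches a cell through a face, where some edge lengths tend to $0$ and $\tau$ degenerates to a quotient tree -- routine but slightly delicate bookkeeping with the cell structure of $\Y_n$ and the various rescalings. Once this continuity is established, $\T_n$ is a $\Mod(\R^2,P)$-equivariant deformation retract, hence a spine, of $\Y_n$, and the proposition follows.
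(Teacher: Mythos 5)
Your proposal is correct and follows essentially the same route as the paper: identify $\T_n$ with the subset of $\Y_n$ of metric trees whose non-maximal edges form a collapsible subforest, and retract $\Y_n$ onto it by the threshold $\ell$, the snap-to-$1$ map $T\mapsto T'$, and linear interpolation, deducing contractibility from Proposition~\ref{prop:teich}. The paper leaves the embedding and the cross-cell continuity checks implicit, so your elaboration of those points is consistent with, and slightly more careful than, the argument given there.
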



\subsection{The augmented complex of trees}\label{sec:aug} In our analysis of obstructed topological polynomials, it will be advantageous for us to consider an augmentation $\hat \T_n$ of $\T_n$, that is, a simplicial complex that contains $\T_n$ as a subcomplex.  The vertices of $\hat \T_n$ that do not lie in $\T_n$ are called bubble trees; we begin by describing these.

\p{Bubble trees} A simple closed curve is \textit{essential} if it is not homotopic into a neighborhood of a marked point or to a neighhborhood of $\infty$ (homotopies here may not pass a curve through a marked point).  Equivalently, a curve is essential if it has at least two marked points in its interior and at least one marked point in its exterior. 

A \textit{multicurve} $M$ in $(\R^2,P)$ is a collection $\{c_1,\ldots,c_m\}$ of pairwise disjoint, pairwise non-homotopic, essential simple closed curves in $(\mathbb{R}^2, P)$.  A multicurve $M$ is \textit{un-nested} if no two curves of $M$ are nested.  

Given an un-nested multicurve $M$, we may obtain a new surface $(\mathbb{R}^2, \bar P)$ from $(\mathbb{R}^2, P)$ by collapsing the interior of each component of $M$ to a marked point.  The new set of marked points $\bar P$ has one element for each component of $M$ and one element for each element of $P$ not contained in the interior of an element of $M$. 

A \emph{bubble tree} in $(\mathbb{R}^2, P)$ is a graph $B$ in $\R^2$ with the following properties:
\begin{enumerate}
\item $B$ is the union of a (possibly empty) un-nested multicurve $M$ in $(\mathbb{R}^2, P)$ with a forest $B_E$ in $\mathbb{R}^2$,\smallskip
\item the leaves of $B_E$ lie in $M \cup P$,\smallskip
\item the intersection $B_E \cap M$ is contained in the set of leaves of $B_E$, \smallskip
\item the forest $B_E$ is disjoint from the interiors of the disks bounded by $M$, and \smallskip
\item the image of $B_E$ in the surface $(\mathbb{R}^2, \bar P)$ obtained by collapsing the disks bounded by $M$ is a tree in $(\mathbb{R}^2, \bar P)$.  
\end{enumerate}
We refer to $B_E$ as the \emph{exterior forest} for $B$ and we refer to the image $T_E$ in $(\mathbb{R}^2, \bar P)$ as the \emph{exterior tree}.  We refer to $M$ as the multicurve of $B$, and we also refer to the components of $M$ as the \emph{bubbles} of $B$.  See the left-hand side of Figure~\ref{fig:BubbleTrees} for a picture of a bubble tree.  

An alternate way to designate a bubble tree is by a pair $(M,T_E)$, where $M$ is an un-nested multicurve, and $T_E$ is a tree in the surface obtained by crushing the interiors of the components of $M$ to points.  Up to isotopy, there is a unique bubble tree $B$ with $M$ as its multicurve and with $T_E$ as the exterior tree.

\begin{figure}
\centering
\includegraphics{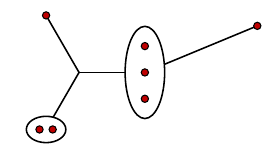}
\qquad\qquad
\includegraphics{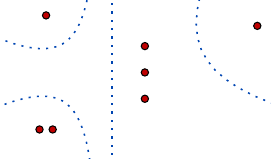}
\caption{A bubble tree and the corresponding non-filling arc system.}
\label{fig:BubbleTrees}
\end{figure}

\p{Arc systems versus bubble trees} There is a natural correspondence between bubble trees and simplices of the arc complex $\A_n$ from Section~\ref{sec:space}.  Given a simplex of $\A_n$, that is, an arc system, the bubbles of the corresponding bubble tree correspond to those complementary regions to the arc system containing more than one marked point (we should choose the bubble curves to each be contained in one of these complementary regions, and homotopic to the boundary). An example of this correspondence is shown in Figure~\ref{fig:BubbleTrees}.  If we collapse the disk bounded by each such bubble to a marked point, then the arc system becomes filling, and we obtain an isotopy class of trees on the collapsed surface; by un-crushing, this gives rise to a bubble tree in the original surface.  This process is reversible, and gives the desired identification.

\p{Simplices} One way to describe the simplices of $\hat \T_n$ is to define $\hat \T_n$ as the barycentric subdivision of $\A_n$.  This agrees with our description of the vertices of $\hat \T_n$ as simplices of $\A_n$.

It is also possible to describe the simplices of $\hat \T_n$ directly in terms of bubble trees.  In particular:
\begin{enumerate}
\item If $T$ is a tree of $\T_n$ and $F$ is a subforest of $T$ that has at least one component with more than one marked point, then collapsing $F$ yields a bubble tree $B$ as shown in Figure~\ref{fig:BubbleForestCollapse}, with one bubble for each component of $F$ that has more than one marked point.  This collapse represents an edge in $\hat\T_n$ joining $T$ and~$B$.\smallskip
\item If $B$ is a bubble tree and $F$ is a subforest of the exterior forest for $B$, then collapsing $F$ yields a new bubble tree~$B'$, representing an edge in $\hat\T_n$ joining $B$ and $B'$.  If any component of $F$ touches a bubble of $B$, then that bubble enlarges and subsumes the collapsed edges in~$B'$, with bubbles joining together if some component of $F$ touches more than one bubble.
\end{enumerate}
As in $\T_n$, higher-dimensional simplices correspond to sequences of such collapses.
\begin{figure}
\centering
$\raisebox{-0.47\height}{\includegraphics{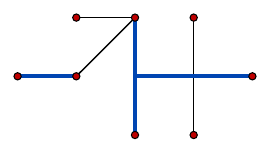}}\qquad\longrightarrow\qquad\raisebox{-0.47\height}{\includegraphics{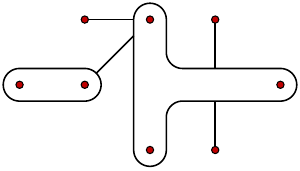}}$
\caption{Collapsing a forest (the thick blue edges) whose components join different marked points together yields a bubble tree.}
\label{fig:BubbleForestCollapse}
\end{figure}

\p{Contractibility} The complex $\A_n$ is contractible (for instance the proof given in \cite[Theorem 5.5]{primer} applies).  It follows that $\hat \T_n$ is contractible, although we will not use this fact in this paper.  We will use the fact that if $\T_n^{\bigcdot}$
is the full subcomplex of $\hat \T_n$ spanned by $\T_n$ and one other vertex in $\hat \T_n$, then $\T_n^{\bigcdot}$ is connected.

\p{The augmented space of metric trees} Another way of describing the complex $\hat \T_n$ is through the corresponding augmentation of $\Y_n$.  There is a natural augmentation $\hat \Y_n$ of $\Y_n$, namely, the space of metric trees with edge lengths in $[0,1]$ instead of $(0,1]$.  Additionally, there is a natural cell structure on $\hat \Y_n$; this is like the cell structure on $\Y_n$, except that instead of simplices with some faces removed, the cells are the entire simplices. The complex $\hat \T_n$ is the poset of cells of~$\hat \Y_n$.


\subsection{Lifting maps}
\label{sec:lifting}

Let $f$ be a post-critically finite topological polynomial with $|P_f|=n$.  The final goal of this section is to describe lifting maps
\begin{align*}
    \lambda_f &: \T_n \to \T_n, \\
    \lambda_f &: \hat \T_n \dasharrow \hat \T_n, \text{ and} \\
    \lambda_f &: \Y_n \to \Y_n,
\end{align*}
where the lifting map on the augmented complex $\hat\T_n$ is defined only on a subcomplex (hence the dashed arrow). After giving the definitions, we give a comparison between the last map and Thurston's pullback map $\sigma_f : \Teich_{0,n+1} \to \Teich_{0,n+1}$.

\p{Lifting on the complex of trees} Let $T$ be a tree in $(\R^2,P)$ where $P=P_f$.  The preimage of $T$ under $f$ is a tree in $(\R^2,f^{-1}(P))$.  Indeed, if $f^{-1}(T)$ had a cycle, then its complement would contain multiple connected components; each of these would necessarily map to the exterior of~$T$, violating the assumption that $f$ is a topological polynomial (since, thinking of $f$ as a map of the sphere, we would have multiple points mapping to $\infty$). Further, if $f^{-1}(T)$ had more than one connected component, then the complement of $f^{-1}(T)$ in $\R^2$ would be a sphere with more than two punctures and the map $f$ would induce an unbranched cover of this complement to the complement of $T$, which is a sphere with exactly two punctures; this is a contradiction.  

We may obtain a tree in $(\R^2,P)$ from $f^{-1}(T)$ by taking the convex hull of $P$ in $f^{-1}(T)$.  By \emph{convex hull} we mean the union of the simple paths in $f^{-1}(T)$ connecting the points of $P$ pairwise.  The result is the desired tree $\lambda_f(T)$ in $(\R^2,P)$. An example of this process is shown in Figure~\ref{fig:LiftExample}.

\begin{figure}
\centering
$\underset{\textstyle \phantom{f^{-1}}T\phantom{f^{-1}}}{\raisebox{-0.47\height}{\includegraphics{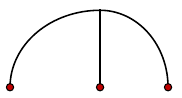}}}$
\qquad$\overset{\textstyle f^{-1}}{\xrightarrow{\qquad}}$\qquad
$\underset{\textstyle f^{-1}(T)}{\raisebox{-0.47\height}{\includegraphics{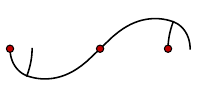}}}$
\qquad$\overset{\textstyle\text{hull}}{\xrightarrow{\qquad}}$\qquad
$\underset{\textstyle \phantom{f^{-1}}\lambda_f(T)\phantom{f^{-1}}}{\raisebox{-0.47\height}{\includegraphics{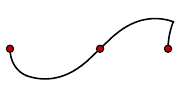}}}$
\caption{The lift of a tree $T$ in $(\R^2,P)$ under the airplane polynomial.  The result is isotopic to the Hubbard tree for the airplane.}
\label{fig:LiftExample}
\end{figure}

The above procedure of lifting and taking the convex hull is well defined on isotopy classes of trees, and so in this way we obtain the desired map $\lambda_f : \T_n \to \T_n$.  This map is simplicial, because any forest collapse in $T$ lifts to a forest collapse in $f^{-1}(T)$.

As usual, we may reinterpret $\lambda_f$ in terms of arc systems.  The preimage of a filling arc system is another filling arc system, and so the correspondence between filling arc systems and vertices of $\T_n$ gives another description of $\lambda_f$.

\p{Hubbard vertices} As in the introduction, we refer to the vertex of $\T_n$ corresponding to the topological Hubbard tree $H_f$ for an unobstructed topological polynomial $f$ as the Hubbard vertex for $f$.  The Hubbard tree for a polynomial $p$ has the property that $p(H_p)\subseteq H_p$, or equivalently $H_p\subseteq p^{-1}(H_p)$.  From this second inclusion it follows that $H_p$ is the convex hull of $P_p$ in $p^{-1}(H_p)$, and therefore $H_p$ is a fixed point of the lifting map $\lambda_p$ on~$\T_n$.  We conclude that the Hubbard vertex $H_f$ is a fixed point of $\lambda_f$ for any unobstructed topological polynomial~$f$.

Because we defined the Hubbard tree for a polynomial $p$ completely in terms of the post-critical set $P_p$ and the Julia set for $p$, and since both of these are invariant under passing to iterates of $p$, it follows that the Hubbard tree for an iterate $p^k$ of~$p$ is the same as the Hubbard tree for~$p$. Thus the Hubbard vertex for a topological polynomial $f$ is the same as the Hubbard vertex for any iterate $f^k$.

\p{Lifting on the augmented complex of trees} The simplest way to define the lifting map $\lambda_f$ on $\hat \T_n$ is to use the correspondence between bubble trees and non-filling arc systems, and to lift arc systems though $f$, as for $\T_n$.  It is possible for the lift of a non-filling arc system to be empty.  This happens when $f(P)$ is a proper subset of~$P$ and the given non-filling arc system does not separate any points of $f(P)$ from one another. Since the empty arc system does not correspond to a vertex of~$\hat\T_n$, the lifting map is only be defined on a subcomplex of~$\hat \T_n$ for such~$f$.

It is also possible to describe $\lambda_f$ in terms of bubble trees.  Given a bubble tree $B$, its pre-image under $f$ is a graph.  If a bubble of this preimage has one or fewer marked points in its interior, then we may collapse its interior to a single point.  Then $\lambda_f(B)$ is the union of the essential bubbles in the resulting graph, along with any edges that lie on some simple path connecting marked points or essential bubbles. Again, if all of the points of $f(P)$ lie in a single bubble of~$B$, it is possible that one bubble of the preimage will have all of the marked points in its interior, in which case the lift of the given bubble tree is not defined.

\p{Lifting on the complex of metric trees} The map $\lambda_f : \Y_n \to \Y_n$ is defined in the same way as the map $\lambda_f : \T_n \to \T_n$.  The only new ingredient is that if $T$ is a metric tree, then $f^{-1}(T)$ inherits a metric.  Indeed, each edge of $f^{-1}(T)$ is a union of preimages of edges of $T$, and so the length of a given edge is the sum of the lengths of the corresponding edges in $T$ (with multiplicity).  The convex hull also inherits a metric from the metric on $f^{-1}(T)$, namely, the restriction.  We may scale the metric on the convex hull so that the sum of the lengths of the edges is 1, thus obtaining a point $\lambda_f(T)$ of $\Y_n$.  

\p{A comparison with Thurston's pullback map} Through the identification of $\Y_n$ with $\Teich_{0,n+1}$ there is a strong analogy between our lifting map $\lambda_f : \Y_n \to \Y_n$ and Thurston's pullback map $\sigma_f : \Teich_{0,n+1} \to \Teich_{0,n+1}$.  One similarity between these maps is that they are both invariant under the action of the liftable mapping class group $\LMod(\mathbb{R}^2,P)$ (see Section~\ref{sec:onear} for the definition). That is, if $h\in
\LMod(\mathbb{R}^2,P)$ is a liftable mapping class, $\tilde{h}\in\PMod(\mathbb{R}^2,P)$ is its lift, and $h_\star$ and $\tilde h_\star$ are the induced homeomorphisms of $\Teich_{0,n+1}$, then $\sigma_f h_\star=\tilde{h}_\star\sigma_f$ and $\lambda_f h_\star=\tilde{h}_\star\lambda_f$.

However, the maps $\sigma_f$ and $\lambda_f$ are in general not equal to each other, or even conjugate to each other.  For instance, when $f$ is an unobstructed topological polynomial, $\sigma_f$ has a unique fixed point in $\Teich_{0,n+1}$ and this fixed point is attracting under iteration of $\sigma_f$.  But there are examples of polynomials (such as the third iterate of the rabbit polynomial $R$) where $\lambda_f$ has a positive-dimensional simplex where each point is fixed.  On the other hand, there exist examples where $\sigma_f$ and $\lambda_f$ are equal, for instance when they are both the constant map.


\section{Finding Hubbard vertices for unobstructed maps}
\label{sec:poirier}

In this section we prove the first statement of Theorem~\ref{thm:main}, which says that the 2-neighborhood of the Hubbard vertex for an unobstructed topological polynomial $f$ is a nucleus for the action of $\lambda_f$ on the corresponding tree complex $\T_n$.   

Our proof is based on a theorem of Poirier that gives two combinatorial conditions for an invariant tree to be a Hubbard tree: an angle condition and an expanding condition.  Poirier proves his result for a slightly larger class of maps than the ones we have been considering, namely, the class of marked topological polynomials.  (We warn the reader that the words ``expansion'' and ``expanding'' in the terms ``forest expansion'' and ``expanding condition'' refer to distinct ideas.)

We begin in Section~\ref{sec:alex} by defining marked topological polynomials, invariant trees, and the dynamical map.  We also state and prove the Alexander method, a tool that we use throughout this paper to show that two topological polynomials are Thurston equivalent.  We state our version of Poirier's theorem as Proposition~\ref{prop:PoirierConditions} in Section~\ref{sec:p1}; readers unfamiliar with Hubbard trees might simply take this proposition as the definition.  In Sections \ref{sec:p2} and \ref{sec:p3} we show that any tree that is invariant under the tree lifting map admits a forest expansion satisfying the angle condition (Proposition~\ref{prop:ExpandAngleCondition}), and then that the latter admits a forest collapse satisfying the expanding condition (Proposition~\ref{prop:InvariantWithinTwo}).  With this in hand, we complete the proof of the first statement of Theorem~\ref{thm:main} in Section~\ref{sec:p4}.  

\subsection{Invariant trees and the Alexander method}
\label{sec:alex}

In this section we describe several combinatorial tools that we will use in the recognition of topological polynomials.  As discussed above, we begin by defining the class of marked topological polynomials.  We then define invariant trees for marked topological polynomials and their corresponding dynamical maps.  Finally we state and prove the Alexander method, as well as a version of the Alexander method that is specialized to the case of maps of degree 2.

\p{Marked topological polynomials}
 A \textit{marked topological polynomial} is a pair $(f,A)$, where $f$ is a post-critically finite topological polynomial and $A$ is a finite set in $\mathbb{R}^2$ that contains $P_f$ and satisfies $f(A)\subseteq A$.  We refer to $A$ as the set of marked points.  Every post-critically finite topological polynomial $f$ can be regarded as a marked topological polynomial with $A= P_f$.  
 
 Two marked topological polynomials $(f,A)$ and $(g,B)$ are \textit{Thurston equivalent} if there are orientation-preserving homeomorphisms $\phi_0,\phi_1\colon(\mathbb{R}^2,A)\to(\mathbb{R}^2,B)$ that are isotopic relative to~$A$ such that $\phi_0 f=g \phi_1$.

A marked topological polynomial $(f,A)$ is a \textit{marked polynomial} if $f$ is a polynomial map. The \textit{Hubbard tree} for a marked polynomial $(f,A)$ is the tree in $(\mathbb{R}^2,A)$ obtained as the union of all regulated arcs in the filled Julia set for $f$ between pairs of points in~$A$ (see \cite{DH1,DH2}).  More generally, if $(f,A)$ is a marked topological polynomial that is Thurston equivalent to a marked polynomial $(g,B)$ by homeomorphisms $\phi_0,\phi_1$ (as in the introduction), then a \textit{topological Hubbard tree} for $(f,A)$ is the preimage of the Hubbard tree for $(g,B)$ under $\phi_0$.

If $(f,A)$ is a marked topological polynomial and $T$ is a tree in $(\mathbb{R}^2,A)$, its \textit{lift} is the convex hull in $f^{-1}(T)$ of the points in~$A$.

\p{The Alexander method} The following proposition is one of the key technical tools of the paper.  In essence, it says that a marked topological polynomial is completely determined by its action on a single tree.  A closely related statement was proven by Bielefeld--Fisher--Hubbard \cite[Theorem 7.8]{BFH}.

\begin{proposition}[Alexander method]\label{prop:alexander} Let $(f,A)$ and $(g,B)$ be marked topological polynomials, let $T_f$ be a tree in $(\mathbb{R}^2,A)$ and $T_g$ be a tree in $(\mathbb{R}^2,B)$, and suppose there exists a homeomorphism $h\colon (\R^2,f^{-1}(A))\to (\R^2,g^{-1}(B))$ so that
\begin{enumerate}
    \item $h(T_f)$ is isotopic to $T_g$ in $(\mathbb{R}^2,B)$,\smallskip
    \item $h f$ and $g h$ agree on $f^{-1}(A)$, and\smallskip
    \item $h(f^{-1}(T_f))$ is isotopic to $g^{-1}(T_g)$ in $(\mathbb{R}^2,g^{-1}(B))$.
\end{enumerate}
Then $f$ and $g$ are Thurston equivalent (and $h$ gives the Thurston equivalence).  In the case where $A=B$, where $h$ is isotopic to the identity relative to $f^{-1}(A)$, and where $f$, $g$, and $h$ satisfy the above three conditions, we may further conclude that $f$ is isotopic to $g$ relative to~$f^{-1}(A)$.
\end{proposition}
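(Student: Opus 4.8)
The plan is to build the Thurston equivalence from the given homeomorphism $h$ by lifting it successively and checking that the lifts stabilize, which is the standard strategy for Alexander-type arguments. The key observation is that the data in hypotheses (1)--(3) is exactly what is needed to run one step of a lifting argument, and that the branched covers $f$ and $g$ are rigid enough that the lift is unique up to isotopy once we know its behavior on the marked point set and on one tree.

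First I would set $h_0 = h$ and construct $h_1 \colon (\R^2, f^{-1}(A)) \to (\R^2, g^{-1}(B))$ as a lift of $h_0$: since $h_0$ maps $(\R^2 \setminus A, T_f)$ to $(\R^2 \setminus B, T_g)$ up to isotopy by (1), and since $f, g$ are branched covers branched over $A, B$ respectively, there is a homeomorphism $h_1$ with $h_0 f = g h_1$. Hypothesis (2) pins down which lift to take: among the finitely many lifts of $h_0$ (they differ by deck transformations, but since we are over a disk/plane the covers are determined by the marked data) we select the one agreeing with the prescribed behavior on $f^{-1}(A)$. Then I would verify that $h_1$ again satisfies the three hypotheses with the roles of $(T_f, f^{-1}(T_f))$ replaced by $(f^{-1}(T_f), f^{-2}(T_f))$: condition (1) for $h_1$ is precisely condition (3) for $h_0$, namely $h_1(f^{-1}(T_f)) \simeq g^{-1}(T_g)$; condition (2) for $h_1$ follows from (2) for $h_0$ by lifting; and condition (3) for $h_1$ follows by lifting the isotopy in (3). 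Iterating, I get a sequence $h_k$ with $h_{k-1} f = g h_k$, each $h_k$ sending $f^{-k}(T_f)$ to $g^{-k}(T_g)$ up to isotopy relative to $g^{-k}(B)$, and all $h_k$ agreeing on $f^{-1}(A)$.

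Next I would argue that the sequence $h_k$ stabilizes up to isotopy: the trees $f^{-k}(T_f)$ fill $(\R^2, A)$ for all $k \ge 1$ (indeed $f^{-1}(T_f)$ already does), and more to the point the preimage trees $f^{-k}(T_f)$ eventually separate the points of $\R^2 \setminus A$ finely enough that a homeomorphism respecting $A$ and sending $f^{-k}(T_f)$ to $g^{-k}(T_g)$ is determined up to isotopy rel $A$. Concretely, because $\lambda_f$ is simplicial and the trees are filling arc systems, knowing the image of a sufficiently high preimage tree determines the mapping class; alternatively one invokes that the nested union $\bigcup_k f^{-k}(T_f)$ is dense enough (this is where post-critical finiteness and the expansion of $f$ on the tree are used). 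So for $k$ large, $h_k$ and $h_{k+1}$ are isotopic rel $g^{-1}(B)$, hence rel $B$. Setting $\phi_1 = h_k$ and $\phi_0 = h_{k-1}$ for such $k$ gives homeomorphisms $(\R^2, A) \to (\R^2, B)$ with $\phi_0 f = g \phi_1$, and $\phi_0, \phi_1$ are isotopic rel $B$ since both are isotopic rel $B$ to the common stabilized homeomorphism. This is exactly the definition of Thurston equivalence of marked topological polynomials, and by construction the equivalence is realized by (a map isotopic to) $h$.

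Finally, for the last sentence: assume $A = B$ and $h \simeq \mathrm{id}$ rel $f^{-1}(A)$. Then $h_0 \simeq \mathrm{id}$ rel $f^{-1}(A)$, and hypothesis (2) says $h f$ and $g h$ agree on $f^{-1}(A)$, i.e. $f$ and $g$ agree on $f^{-1}(A)$ after adjusting by $h \simeq \mathrm{id}$; pushing through the lifting construction, each $h_k$ is isotopic to the identity rel $f^{-1}(A)$ (the lift of a map isotopic to the identity, normalized to agree with the identity on $f^{-1}(A)$, is again isotopic to the identity rel a suitable preimage set), so the stabilized $\phi_0, \phi_1$ are both isotopic to the identity rel $f^{-1}(A)$; combined with $\phi_0 f = g \phi_1$ this yields that $f$ is isotopic to $g$ rel $f^{-1}(A)$.

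\textbf{Main obstacle.} The crux is the stabilization step: showing that a homeomorphism sending some high preimage tree $f^{-k}(T_f)$ to $g^{-k}(T_g)$, respecting the marked points, is determined up to isotopy rel marked points. This is the genuine ``Alexander method'' content and requires care about how much the nested preimage trees fill the surface; one must rule out that the $h_k$ differ by a nontrivial mapping class supported in the complementary regions. I expect this to rest on the fact that the complementary regions of $f^{-k}(T_f)$ are disks with at most one marked point (the filling condition), so that any mapping class fixing the tree up to isotopy and fixing $A$ pointwise is trivial, exactly as in the classical Alexander method for surfaces, Proposition 2.8 of \cite{primer}.
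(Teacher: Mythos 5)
Your overall strategy---lift $h$ through the branched covers and compare the lifts---is in the right family, but the proof has a genuine gap exactly where you flag it: the stabilization of the tower $h_k$ up to isotopy. Nothing in the hypotheses forces $h_k$ and $h_{k+1}$ to become isotopic for large $k$. The proposition makes no expansion assumption whatsoever ($f$ is an arbitrary marked topological polynomial and $T_f$ an arbitrary tree), so the claim that the preimage trees $f^{-k}(T_f)$ eventually ``separate points finely enough'' has no basis; and appealing to contraction properties of the lifting map would be circular, since the Alexander method is an ingredient in the proof of Theorem~\ref{thm:main}. As written, the crux of your argument is an unproved (and, absent extra hypotheses, unprovable) dynamical statement.

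The deeper issue is that the infinite tower is a detour. Thurston equivalence requires only \emph{two} homeomorphisms $\phi_0,\phi_1$ with $\phi_0 f=g\phi_1$ and $\phi_0\simeq\phi_1$ rel $A$, and hypotheses (1) and (3) already hand you both: isotope $h$ rel $A$ to $h_0$ with $h_0(T_f)=T_g$, and separately isotope $h$ rel $f^{-1}(A)$ to $h_1$ with $h_1(f^{-1}(T_f))=g^{-1}(T_g)$; since both are isotopic to $h$, they are isotopic to each other rel $A$, and no further lifts are needed. The remaining work---which your proposal does not address---is to promote the agreement of $gh_1$ and $h_0f$ on $f^{-1}(A)$ (condition (2)) to equality everywhere. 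One first arranges agreement on the whole tree $f^{-1}(T_f)$ (the two maps already agree on its vertices), and then uses that the complement of a tree in $S^2$ is a disk with a single marked point at $\infty$, together with the rigidity fact that a branched cover of a once-marked disk over a once-marked disk is determined up to isotopy rel the boundary by its restriction to the boundary. That disk-rigidity step is the actual ``Alexander method'' content here; it replaces your stabilization argument entirely, and the addendum for $A=B$ and $h\simeq\mathrm{id}$ follows from the same single-lift construction.
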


\begin{proof}

It follows from the first condition in the statement that there exists a homeomorphism $h_0$ that is homotopic to $h$ relative to $A$ so that $h_0(T_f)$ is equal to $T_g$.  Similarly, it follows from the third condition that there exists a homeomorphism $h_1$ that is isotopic to $h$ relative to $f^{-1}(A)$ so that $h_1(f^{-1}(T_f))$ is equal to $g^{-1}(T_g)$.  It follows from the second condition that the restrictions of $g h_1$ and $h_0 f$ to the set of vertices of $f^{-1}(T_f)$ (i.e.~points of $f^{-1}(A)$ together with branching points) are equal.  Thus we may further modify $h_1$ by isotopy relative to $f^{-1}(A)$ so that the restrictions of $g h_1$ and $h_0 f$ to the entire tree $f^{-1}(T_f)$ are equal.  

We may identify $(\R^2,f^{-1}(A))$ and $(\R^2,B)$ with $(S^2,f^{-1}(A)\cup\infty)$ and $(S^2,B \cup \infty)$, by which we mean the sphere with marked points coming from $f^{-1}(A)$ (or~$B$) and $\infty$.  We may further regard $(S^2,f^{-1}(A)\cup\infty)$ as being obtained from $f^{-1}(T_f)$ by attaching a disk with a single marked point at $\infty$ and similarly for $(S^2,B \cup \infty)$ and $T_g$.  Since a branched cover of a disk with one marked point over another disk with one marked point is determined up to isotopy (relative to the boundary and the marked point) by its restriction to the boundary, it follows from the conclusion of the previous paragraph that we may further modify $h_1$ by an isotopy so that $g h_1$ is equal $h_0 f$.  In other words, $f$ and $g$ are Thurston equivalent, as desired.
\end{proof}

\begin{figure}[tb]
\centering
$\underset{\textstyle\text{(a)}}{\includegraphics{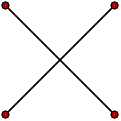}}$
\qquad\qquad
$\underset{\textstyle\text{(b)}}{\includegraphics{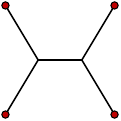}}$
\qquad\qquad
$\underset{\textstyle\text{(c)}}{\includegraphics{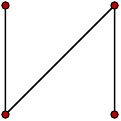}}$
\caption{(a)~The topological Hubbard tree $H_f$ for for the cubic polynomial $f(z)=z^3-3z/4+i\sqrt{7}/4$. (b)~A second invariant tree $T$ for $f$, obtained by expanding~$H_f$. (c)~A third invariant tree $T'$ for~$f$, obtained by contracting~$T$ along two edges}
\label{fig:TreesCubicRabbit}
\end{figure}

\p{Invariant trees} Given a post-critically finite topological polynomial~$f$, an \textit{invariant tree} for $f$ is a tree $T$ in $(\mathbb{R}^2,P_f)$ for which $\lambda_f(T)$ is isotopic to~$T$.   That is, $T$ is invariant if the convex hull of~$P_f$ in~$f^{-1}(T)$ is isotopic to~$T$ in $(\mathbb{R}^2,P_f)$.

We have seen that the Hubbard tree for a post-critically finite polynomial~$f$ is an invariant tree. One might guess that any invariant tree must be isotopic to the Hubbard tree, but this is not true.  For example, Figure~\ref{fig:TreesCubicRabbit} shows three distinct invariant trees for the cubic polynomial 
\[
f(z) = z^3 - \frac{3}{4} z + \frac{i\sqrt{7}}{4}.
\]
The polynomial $f$ has four post-critical points.

The tree $H_f$ on the left-hand side of Figure~\ref{fig:TreesCubicRabbit} is the topological Hubbard tree for $f$.  We can check with a computer that $f^{-1}(H_f)$ is the tree shown on the left-hand side of Figure~\ref{fig:example}.  The map $f^{-1}(H_f) \to H_f$ maps each edge labeled $\widetilde{e_i}$ in $f^{-1}(H_f)$ isomorphically to the edge $e_i$ in~$T$.  We then see from the picture that the convex hull of the marked points in $f^{-1}(H_f)$ is equal to $H_f$, and so $H_f$ is indeed invariant (even without knowing that it is the Hubbard tree).  

The tree $T$ in Figure~\ref{fig:TreesCubicRabbit}(b) is obtained from the Hubbard tree $H_f$ by a single edge expansion, and the tree $T'$ is obtained from $T$ by collapsing two edges.  If we perform the expansion on $H_f$ and at the same time perform the corresponding edge expansions on $f^{-1}(H_f)$ (again, refer to Figure~\ref{fig:example}), we construct in this way $f^{-1}(T)$ from $f^{-1}(H_f)$, and we again see that $T$ is the convex hull of the marked points of $f^{-1}(T)$.  We may similarly show that $T'$ is invariant by simultaneously performing edge collapses on $T$ and $f^{-1}(T)$.

\begin{figure}[tb]
\centering
\raisebox{-0.48\height}{\includegraphics{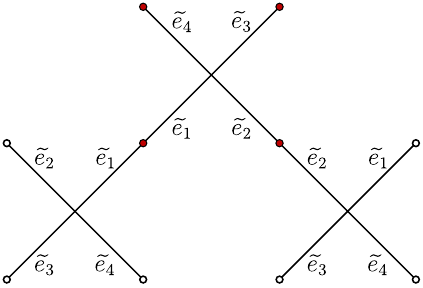}} $\qquad\longrightarrow\qquad$
\raisebox{-0.48\height}{\includegraphics{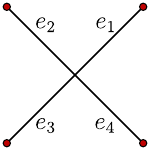}}
\caption{The topological Hubbard tree $H_f$ for $f(z) = z^3 - \frac{3}{4} z + \frac{i\sqrt{7}}{4}$ and $f^{-1}(H_f)$.  Here and throughout, preimages of marked points are shown in white, and each of the edges labeled $\widetilde{e_i}$ maps isomorphically to~$e_i$}
\label{fig:example}
\end{figure}

Pilgrim \cite{pilgrim_personal} has explained to us a topological construction of the map $f$ from Figure~\ref{fig:example}, as follows.  We start with a square in the plane.  We then connect all 4 corners to infinity by straight rays.  The result is a ``plane graph'' with 4 compact edges and 4 non-compact edges, or rays.  We may construct this graph so that it is invariant under rotation of the plane by $\pi$.  If we then blow up two rays at adjacent corners, as described in the paper by Pilgrim--Tan \cite[Section 5.2]{PT}, we obtain a topological polynomial that is Thurston equivalent to $f$.

We will use the polynomial $f$ as a running example throughout this section.

\p{The dynamical map}
Any invariant tree $T$ for a post-critically finite topological polynomial $f$ has an associated \textit{dynamical map} $f_*\colon T\to T$.  In the special case where $\lambda_f(T)=T$ setwise (or equivalently $f(T)\subseteq T$), this is simply the restriction of $f$ to~$T$, but in the general case where $\lambda_f(T)$ is isotopic to~$T$, we must compose $f$ with an isomorphism $T\to\lambda_f(T)$ obtained from an ambient isotopy fixing~$P_f$.  Note then that $f_*$ induces a well-defined map on vertices and edges, but is otherwise only defined up to homotopy.

The dynamical map $f_*\colon T\to T$ maps vertices of $T$ to vertices of~$T$, and maps each edge of $T$ to a path of one or more edges in~$T$.  Note that an edge of $T$ that has a critical point in its interior may map to a path of edges in $T$ that includes backtracking.  This mapping of vertices and edges depends only on the isotopy class of~$f$. 

For our example $f(z) = z^3 - \tfrac{3}{4} z + \tfrac{i\sqrt{7}}{4}$, we can see from Figure~\ref{fig:example} that the dynamical map $f_* \colon H_f \to H_f$ has the following action on the edges of $H_f$:
\[
f_*(e_i) = e_{i+2}
\]
where the indices are taken modulo 4.

\p{Alexander method in degree 2} Here we prove a useful consequence of the Alexander method for topological polynomials of degree 2, Proposition~\ref{prop:alex2} below. Shepelevtseva--Timorin~\cite[Theorem~A]{ST} prove a version of this for post-critically finite branched covers of the sphere of degree 2.  

A topological polynomial $f$ of degree 2 has a unique critical point $p_0$ in~$\R^2$; its image $f(p_0)$ is the critical value.  A topological polynomial of degree 2 is post-critically finite if and only if the critical point is either periodic or pre-periodic, and we will be focusing on the periodic case.

\begin{lemma}\label{lem:CriticalValueLeaf}Let $f$ be a topological polynomial of degree 2 whose critical point is periodic of period $n\geq 2$, and let $T$ be an invariant tree for~$f$.  Then the critical value for $f$ has valence 1 in~$T$.
\end{lemma}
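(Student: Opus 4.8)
The plan is to argue by contradiction: suppose the critical value $v = f(p_0)$ has valence at least $2$ in the invariant tree $T$, and derive a contradiction with the hypothesis that the critical point $p_0$ is periodic of period $n \geq 2$. First I would set up the local picture of the lifting map near $v$. Since $f$ has degree $2$, the preimage $f^{-1}(v)$ consists of the single critical point $p_0$, and $f$ is locally a double branched cover there; consequently, if $v$ has valence $k \geq 2$ in $T$, then $p_0$ has valence $2k$ in $f^{-1}(T)$, with the $2k$ edge-germs at $p_0$ cyclically covering the $k$ edge-germs at $v$ two-to-one. Away from $p_0$, the covering $f^{-1}(T) \to T$ is a homeomorphism onto its image on each sheet, so every other vertex of $f^{-1}(T)$ maps with a well-defined local degree $1$.

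The key step is to understand $\lambda_f(T)$, which is the convex hull of $P_f$ inside $f^{-1}(T)$, and to show that the critical point $p_0$ must be a \emph{leaf} of this convex hull — i.e.\ that at most one of the $2k$ edge-germs at $p_0$ survives into the convex hull. The reason is a parity/combinatorial argument on the covering: consider the two ``halves'' of the double cover near $p_0$. Because $p_0 \notin P_f$ is not forced to be in the convex hull a priori, but $T$ is invariant, $\lambda_f(T) \cong T$ as a tree in $(\R^2, P_f)$; since $v$ has valence $k$ in $T$, the point of $\lambda_f(T)$ playing the role of $v$ (namely $f^{-1}(v) = p_0$, if it lies in the hull) must have valence $k$ there. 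But the structure of $f^{-1}(T)$ near $p_0$ has rotational symmetry of order $2$ coming from the deck transformation of the local double cover: the $2k$ germs split into two $\Z/2$-orbits of... more precisely, the deck involution pairs germ $i$ with germ $i+k$. A path in the convex hull through $p_0$ enters along one germ and leaves along another; tracing its image under $f$, which folds germ $i$ and germ $i+k$ onto the same germ at $v$, shows that an entering–leaving pair at $p_0$ projecting to two distinct germs at $v$ would force the image path to traverse $v$ with valence... this is where I would pin down that either $p_0$ has valence $\leq 1$ in the hull, or else $v$ has valence $\geq k+1 > k$ in $\lambda_f(T)$, contradicting invariance. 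So $p_0$ is a leaf (or absent) in $\lambda_f(T) \simeq T$.

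Now I would run the periodicity hypothesis. The dynamical map $f_* : T \to T$ sends $v = f(p_0)$ to $f(v) = f^2(p_0)$, sends that to $f^3(p_0)$, and after $n-1$ more steps returns $f^{n}(p_0) = p_0$ back to... wait, more carefully: the orbit $p_0 \mapsto v \mapsto f^2(p_0) \mapsto \cdots \mapsto f^{n-1}(p_0) \mapsto p_0$ is a cycle of length $n$ in $P_f \cup \{p_0\}$, all of whose points except possibly $p_0$ lie in $P_f \subseteq T$. In particular $p_0 = f^{n-1}(v)$ lies in the $f_*$-orbit, hence lies in $T$, and since $p_0$ is periodic it is not an endpoint of the tree coming from a ``dangling'' preimage — but we just showed $p_0$ has valence $\leq 1$ in $\lambda_f(T) \simeq T$, so $p_0$ is a leaf of $T$. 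A leaf of an invariant tree whose dynamical map is a cycle must have its entire forward orbit consist of leaves (the dynamical map sends the single edge-germ at a leaf to an edge-path, so the image point has valence $\geq 1$, but to be a leaf image it... ) — actually the cleaner route: apply the same valence analysis at $v$ directly. Since $p_0$ is a leaf of $T$ and $f_*$ maps the unique germ at $p_0$ to a germ at $v$, and $f_*$ is a homeomorphism on the rest, one checks $v$ inherits valence exactly equal to the valence of $f_*^{-1}$-images, forcing $v$ to have valence $1$ as well, contradicting our assumption that $v$ has valence $\geq 2$.

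The main obstacle I expect is making the local covering argument at $p_0$ rigorous — precisely controlling how the $2k$ edge-germs of $f^{-1}(T)$ at the critical point descend to the $k$ germs at $v$ under the convex-hull operation, and showing this forces valence $\leq 1$. This is essentially a statement about how a degree-$2$ branch point interacts with taking convex hulls, and I would likely prove it by invoking the Alexander method (Proposition~\ref{prop:alexander}) together with an explicit model: compare $f$ near $p_0$ with the standard map $z \mapsto z^2$, for which the claim can be checked by hand, and transport the conclusion along the isotopy provided by invariance of $T$.
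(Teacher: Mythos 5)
There is a genuine gap here, at the step you yourself flag as the ``main obstacle.'' Your plan hinges on showing that the critical point $p_0$ survives into the convex hull $\lambda_f(T)$ with valence at most $1$, but the parity/deck-transformation argument is never actually carried out (the two sentences meant to contain it trail off), and the dichotomy ``either $p_0$ has valence $\leq 1$ in the hull or $v$ has valence $\geq k+1$'' is asserted without proof. There is also a conflation in the setup: under the isotopy $\lambda_f(T)\simeq T$ the vertex corresponding to $v=p_1$ is $p_1$ itself (isotopies fix marked points), not its preimage $p_0$; what invariance actually gives at the critical point is only $\mathrm{val}_T(p_0)\leq 2\,\mathrm{val}_T(p_1)$, which is too weak to force $p_0$ to be a leaf. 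Indeed $p_0$ need not be a leaf of an invariant tree at all: for the airplane polynomial the critical point $0$ sits in the interior of the Hubbard tree with valence $2$ (only the critical \emph{value} is a leaf). Finally, the closing step rests on a false principle: the forward orbit of a leaf under $f_*$ need not consist of leaves (again the airplane: the leaf $c^2+c$ maps to the valence-$2$ vertex $0$). Local injectivity of $f_*$ at a non-critical point gives $\mathrm{val}(f_*(p))\geq\mathrm{val}(p)$, i.e.\ valence is non-\emph{decreasing} along the orbit, which is the opposite of what your last paragraph needs.

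The paper's proof runs in exactly that opposite direction and avoids the critical point entirely. Since $f_*$ is locally injective at each of $p_1,\dots,p_{n-1}$, one gets $\mathrm{val}(p_1)\leq\mathrm{val}(p_2)\leq\cdots\leq\mathrm{val}(p_{n-1})$. Every leaf of $T$ is a marked point (vertices of valence $\leq 2$ are required to be in $P_f$), $T$ has at least two leaves, and at most one of them can be $p_0$; hence some $p_i$ with $i\geq 1$ is a leaf, and the chain of inequalities forces $\mathrm{val}(p_1)=1$. If you want to salvage your approach, replace the local analysis at the branch point with this monotonicity along the unbranched part of the orbit.
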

\begin{proof}Let $p_0$ be the critical point for $f$ with orbit $(p_0,\ldots,p_{n-1})$.  Since $f_*$ is locally injective at $p_1,\ldots,p_{n-1}$, we know that
\[
\mathrm{val}(p_1) \leq \mathrm{val}(p_2) \leq \cdots \leq \mathrm{val}(p_{n-1})
\]
where $\mathrm{val}(p_i)$ denotes the valence of~$p_i$ in~$T$.  But the leaves of $T$ are marked, and at least one of $p_1,\dots,p_{n-1}$ is a leaf ($T$ has at least two leaves), and therefore $\mathrm{val}(p_1)=1$.
\end{proof}

The proof of the following proposition is a variant of an argument of Douady and Hubbard~\cite[Section~6.1]{DH1}.

\begin{proposition}\label{prop:alex2}Let $f$ and $g$ be topological polynomials of degree 2 whose critical points are periodic of the same period $n\geq 2$. Let $T_f$ and $T_g$ be invariant trees for~$f$ and $g$, and suppose there exists a homeomorphism $h\colon (\R^2,P_f)\to (\R^2,P_g)$ so that
\begin{enumerate}
\item $h(T_f)$ is isotopic to $T_g$ in $(\R^2,P_g)$,\smallskip
\item $h f$ agrees with $g h$ on $P_f$, and\smallskip
\item $h$ maps the critical point of $f$ to the critical point of $g$.
\end{enumerate}
Then $f$ and $g$ are Thurston equivalent (and $h$ gives the Thurston equivalence).  In the case where $P_f=P_g$, where $h$ is isotopic to the identity relative to~$P_f$, and where $f$, $g$, and $h$ satisfy the above three conditions, we may further conclude that $f$ is isotopic to $g$ relative to~$P_f$.
\end{proposition}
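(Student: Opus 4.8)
The plan is to deduce the proposition from the general Alexander method, Proposition~\ref{prop:alexander}, applied with the marked sets $A=P_f$ and $B=P_g$. For this I must upgrade the given homeomorphism $h\colon(\R^2,P_f)\to(\R^2,P_g)$ to a homeomorphism $\tilde h\colon(\R^2,f^{-1}(P_f))\to(\R^2,g^{-1}(P_g))$ that is isotopic to $h$ relative to $P_f$ and satisfies conditions (1)--(3) of Proposition~\ref{prop:alexander}; Proposition~\ref{prop:alexander} then produces the Thurston equivalence, realized by $\tilde h$ and hence (by the isotopy to $h$) by $h$. I would begin by normalizing. Since $T_f$ is invariant, after replacing $f$ by an isotopic representative relative to $P_f$ -- which changes the degree, the critical point, and the dynamics on $P_f$ not at all, and hence changes none of the hypotheses or conclusions -- I may assume $f(T_f)\subseteq T_f$, so that $T_f\subseteq f^{-1}(T_f)$ and $T_f$ is exactly the convex hull of $P_f$ in $f^{-1}(T_f)$; similarly for $g$; and after post-composing $h$ with an ambient isotopy of $(\R^2,P_g)$ relative to $P_g$ (which only changes $h$ up to isotopy relative to $P_f$) I may assume $h(T_f)=T_g$, so that $h$ restricts to a homeomorphism $(T_f,P_f)\to(T_g,P_g)$. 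Since the critical point $p_0$ is periodic it lies in $P_f$, so hypotheses (2) and (3) give $h(p_1)=h(f(p_0))=g(h(p_0))=g(q_0)=q_1$; thus this restriction carries the critical point $p_0$ to $q_0$ and, by Lemma~\ref{lem:CriticalValueLeaf}, the critical-value leaf $p_1$ to the leaf $q_1$.

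The heart of the proof is a degree-$2$ rigidity statement in the spirit of Douady--Hubbard. Viewing $f$ as a degree-$2$ branched cover of $S^2$, its only branch point meeting $T_f$ is the leaf $p_1$, so $f$ restricts to a degree-$2$ covering $f^{-1}(T_f)\setminus\{p_0\}\to T_f\setminus\{p_1\}$ of a contractible space, which is therefore trivial; reattaching the branch point shows $f^{-1}(T_f)$ is the union of two copies of $T_f$ joined at their $p_1$-leaves to the single vertex $p_0$, with $f$ a homeomorphism on each copy. Because $f$ is orientation-preserving, the planar (ribbon) structure of $f^{-1}(T_f)$, the subset $f^{-1}(P_f)$, and the map $f\colon(f^{-1}(T_f),f^{-1}(P_f))\to(T_f,P_f)$ are all determined by the planar tree $(T_f,P_f)$ together with the choice of leaf $p_1$: concretely, $f\colon f^{-1}(T_f)\to T_f$ is the double branched cover of $T_f$ along $p_1$, a construction functorial in the triple $(T_f,P_f,p_1)$ up to the deck involution. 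Applying this functoriality to $h|_{T_f}\colon(T_f,P_f,p_1)\to(T_g,P_g,q_1)$ yields an isomorphism of planar trees $f^{-1}(T_f)\to g^{-1}(T_g)$ satisfying $g\tilde h=hf$ on $f^{-1}(T_f)$, unique up to the deck involution of $g^{-1}(T_g)$, and I would extend it across the complementary disks -- which contain no marked points, so this is possible and essentially unique -- to get a homeomorphism of $\R^2$.

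The step I expect to be the main obstacle is choosing the correct one of the two lifts: I want the one with $\tilde h|_{P_f}=h|_{P_f}$, and this is exactly where hypothesis (3) is indispensable (without it the two lifts are genuinely inequivalent, and indeed the statement fails). For $y\in P_f$ one has $g(\tilde h(y))=h(f(y))=g(h(y))$ by hypothesis (2), so $\tilde h(y)$ is either $h(y)$ or its image under the deck involution; and in the explicit description above, which of the two copies of $T_f$ a point of $P_f$ lands in is governed by which side of $p_0$ it lies on in $T_f$. Since $h$ sends $p_0$ to $q_0$ and respects the planar structure of the two trees at these vertices, the copy-partition of $P_f$ is carried by $h$ to that of $P_g$, and choosing the deck sheet of $g^{-1}(T_g)$ accordingly makes $\tilde h$ agree with $h$ on all of $P_f$; this compatibility is the one technical point I would need to verify carefully, using the normalization $f(T_f)\subseteq T_f$ and the fact that $p_0$ has valence at most $2$ in $T_f$.

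Granting the right choice of $\tilde h$, the verification is short. Conditions (2) and (3) of Proposition~\ref{prop:alexander} are immediate from $g\tilde h=hf$ on $f^{-1}(T_f)\supseteq f^{-1}(P_f)$ together with $\tilde h|_{P_f}=h|_{P_f}$; for condition (1), $g(\tilde h(T_f))=h(f(T_f))\subseteq h(T_f)=T_g$, so $\tilde h(T_f)$ is a subtree of $g^{-1}(T_g)$ whose leaves lie in $\tilde h(P_f)=P_g$ and which contains $P_g$, forcing $\tilde h(T_f)=T_g$ since $T_g$ is the convex hull of $P_g$ in $g^{-1}(T_g)$. Finally, $\tilde h$ and $h$ agree on $P_f$ and both carry $T_f$ to $T_g$, so $h^{-1}\tilde h$ fixes the isotopy class of a filling tree and is therefore isotopic to the identity relative to $P_f$ by the Alexander method for mapping class groups \cite{primer}; hence $\tilde h$ is isotopic to $h$ relative to $P_f$, and Proposition~\ref{prop:alexander} finishes the main statement. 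For the last sentence, suppose $P_f=P_g=:P$ and $h$ is isotopic to the identity relative to $P$. The Thurston equivalence just produced consists of $\phi_0,\phi_1\colon(\R^2,P)\to(\R^2,P)$ with $\phi_0f=g\phi_1$, with $\phi_0$ isotopic to $\phi_1$ relative to $P$, and with $\phi_1$ isotopic to $h$, hence to the identity, relative to $P$. Writing $f=\phi_0^{-1}g\phi_1$ and using that $\phi_1$ and $\phi_0$ fix $P$ up to isotopy, one gets that $g\phi_1$ is isotopic to $g$ relative to $P$ and that $\phi_0^{-1}g$ is isotopic to $g$ relative to $g^{-1}(P)$, hence relative to $P$; therefore $f$ is isotopic to $g$ relative to $P$.
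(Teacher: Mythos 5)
Your proposal is correct and follows essentially the same route as the paper's proof: lift $h$ through the degree-2 branched covers to get $\tilde h$ with $g\tilde h = hf$, use Lemma~\ref{lem:CriticalValueLeaf} to see that $p_0$ separates $f^{-1}(T_f)$ into two copies of $T_f$, resolve the deck-involution ambiguity by matching which component of $T_f\setminus\{p_0\}$ (equivalently of $f^{-1}(T_f)\setminus\{p_0\}$) each post-critical point lies in, and then invoke Proposition~\ref{prop:alexander}. Your ``copy-partition'' argument is exactly the paper's chain of equivalences between components of $f^{-1}(T_f)\setminus\{p_0\}$, $T_f\setminus\{p_0\}$, $T_g\setminus\{q_0\}$, and $g^{-1}(T_g)\setminus\{q_0\}$, just phrased differently.
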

\begin{proof}Modifying $h$ by an isotopy, we may assume that $h(T_f)=T_g$.  Let $p_0$ and $q_0$ be the critical points for $f$ and $g$ respectively, with orbits $(p_0,\ldots,p_{n-1})$ and $(q_0,\ldots,q_{n-1})$.  By conditions~(2) and~(3), we know that $h(p_i) = q_i$ for each~$i$, and in particular $h(p_1)=q_1$.  We can therefore lift $h$ through $f$ and $g$ to obtain a homeomorphism $\tilde{h}\colon (\R^2,f^{-1}(P_f))\to (\R^2,g^{-1}(P_g))$ such that $g \tilde{h}=h f$.  Note then that $\tilde{h}$ maps $f^{-1}(T_f)$ to $g^{-1}(T_g)$.  Moreover, since $g(\tilde{h}(p_i)) = h(f(p_i)) = h(p_{i+1}) = q_{i+1}$ for each~$i$, we know that each $\tilde{h}(p_i)$ is either $q_i$ or $\delta_g(q_i)$, where $\delta_g$ is the nontrivial deck transformation for~$g$.  Replacing $\tilde{h}$ by $\delta_g \tilde{h}$ if necessary, we may assume that $\tilde{h}(p_1)=q_1$.  We claim that $\tilde{h}(p_i)=q_i$ for each~$i$.

We now prove the claim.  By Lemma~\ref{lem:CriticalValueLeaf}, the point $p_1$ is a leaf of $T_f$, so $p_0$ separates $f^{-1}(T_f)$ into two components that map homeomorphically to $T_f$ under $f$, and $q_0$ separates $g^{-1}(T_g)$ similarly.  Indeed, for each $i\geq 1$, one of the components of $g^{-1}(T_g)\setminus\{q_0\}$ contains $q_i$ and the other contains~$\delta_g(q_i)$.  But for such an $i$, the following statements are all equivalent:
\begin{enumerate}
\item $\tilde{h}(p_i)$ lies in the same component of $g^{-1}(T_g)\setminus\{q_0\}$ as $q_1$.
\item $p_i$ lies in the same component of $f^{-1}(T_f)\setminus\{p_0\}$ as $p_1$.
\item $p_i$ lies in the same component of $T_f\setminus\{p_0\}$ as $p_1$.
\item $q_i$ lies in the same component of $T_g\setminus\{q_0\}$ as $q_1$.
\item $q_i$ lies in the same component of $g^{-1}(T_g)\setminus\{q_0\}$ as $q_1$.
\end{enumerate}
Here the equivalence of (1) and (2) follows from the fact that $\tilde{h}$ maps $f^{-1}(T_f)$ homeomorphically to $g^{-1}(T_g)$, with $\tilde{h}(p_0)=q_0$ and $\tilde{h}(p_1)=q_1$; the equivalence of (2) and (3) follows from the invariance of~$T_f$; the equivalence of (3) and (4) follows from the homeomorphism between $T_f$ and $T_g$ induced by~$h$; and the equivalence of (4) and (5) follows from the invariance of~$T_g$.  From the equivalence of (1) and~(5), we conclude that $\tilde{h}(p_i)$ and $q_i$ lie in the same component of $g^{-1}(T_g)\setminus\{q_0\}$, and therefore $\tilde{h}(p_i)=q_i$ for all~$i$.  This completes the proof of the claim.  

Since $T_f$ is isotopic to the convex hull of $P_f$ in $f^{-1}(T_f)$ and $T_g$ is isotopic to the convex hull of $P_g$ in $f^{-1}(T_g)$, it follows that $\tilde{h}(T_f)$ is isotopic to $T_g$ in $(\R^2,P_g)$ (and hence $\tilde{h}$ is isotopic to $h$ relative to~$P_f$).  Also, $\tilde{h}$ agrees with $h$ on $P_f$, so $\tilde{h} f$ agrees with $g \tilde{h} = h f$ on $f^{-1}(P_f)$, and we know that $\tilde{h}$ maps $f^{-1}(T_f)$ to $g^{-1}(T_g)$. By the Alexander method (Proposition~\ref{prop:alexander}), we conclude that $h$ and $\tilde{h}$ provide a Thurston equivalence between $f$ and~$g$. 
\end{proof}

\subsection{Poirier's conditions}\label{sec:p1}

The goal of this subsection is to state Poirier's conditions and to prove that they are sufficient conditions for an invariant tree to be a topological Hubbard tree for an unobstructed post-critically finite topological polynomial; see Proposition~\ref{prop:PoirierConditions}.  In order to state the two conditions, we need to introduce the notion of angle assignments and the notion of a Julia edge.

\p{Angle assignments}
Given a tree $T$ in $(\R^2,P)$ and a vertex $v$ of $T$ with incident edges $e_1,\ldots,e_n$ (in counterclockwise order), the \textit{angles} at $v$ are the elements of the set
\[
\Theta(T,v) = \bigl\{ (e_1,e_2),\; (e_2,e_3),\; \ldots,\; (e_{n-1},e_n),\; (e_n,e_1) \bigr\}.
\]
An \textit{angle assignment} at $v$ is a function $\angle\colon \Theta(T,v)\to (0,1]$ such that
\[
\angle (e_1,e_2) + \cdots + \angle (e_{n-1},e_n) + \angle (e_n,e_1) = 1.
\]
More generally, the \textit{angles} $\Theta(T)$ of a tree $T$ are the disjoint union of $\Theta(T,v)$ as $v$ ranges over all the vertices of~$T$, and an \textit{angle assignment} for $T$ is a function $\angle\colon \Theta(T)\to (0,1]$ whose restriction to each $\Theta(T,p)$ is an angle assignment at~$v$.  We refer to $\angle(\theta)$ as the \emph{measure} of the angle $\theta$.  

If $f$ is a post-critically finite topological polynomial and $T$ is a tree in $(\mathbb{R}^2,P_f)$, then we can lift an angle assignment $\angle$ on $T$ to an angle assignment $\angle'$ on $T'=\lambda_f(T)$ using the following procedure.
\begin{enumerate}
\item First we lift $\angle$ to an angle assignment $\widetilde{\angle}$ on $f^{-1}(T)$ defined by
\[
\widetilde{\angle}(e_1,e_2) = \frac{\angle\bigl(f(e_1),f(e_2)\bigr)}{d_f(v)}
\]
for $(e_1,e_2)\in \Theta(f^{-1}(T),v)$,
where $d_f(v)$ is the local degree of $f$ at~$v$.\smallskip
\item Next we restrict $\widetilde{\angle}$ to an angle assignment $\angle'$ on $T'$ as follows.  Observe that each vertex $v$ of $T'$ is also a vertex of~$f^{-1}(T)$, with each angle at $v$ in $T'$ obtained by joining together one or more angles at $v$ in $f^{-1}(T)$.  As such, we define $\angle'$ so that the measure of each angle in $T'$ is the sum of the measures of the corresponding angles in $f^{-1}(T)$.
\end{enumerate}
An example of this procedure for the airplane polynomial is shown in Figure~\ref{fig:AnglesExample}.
\begin{figure}[t]
\centering
$\underset{\textstyle\text{(a)}}{\includegraphics{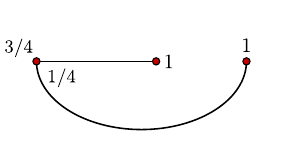}}$
\hfill
$\underset{\textstyle\text{(b)}}{\includegraphics{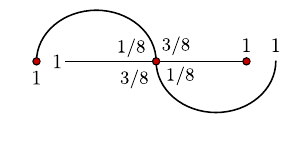}}$
\hfill
$\underset{\textstyle\text{(c)}}{\includegraphics{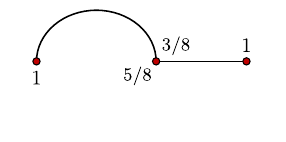}}$
\caption{(a)~An angle assignment $\angle$ for a tree~$T$. (b)~The lift $\widetilde{\angle}$ on $f^{-1}(T)$, where $f$ is the airplane polynomial. (c)~The restriction $\angle'$ of $\widetilde{\angle}$ to~$\lambda_f(T)$}
\label{fig:AnglesExample}
\end{figure}
Given an angle assignment $\angle$ on~$T$, we will refer to the resulting angle assignment $\angle'$ on $\lambda_f(T)$ as the \textit{lift} of $\angle$ to $\lambda_f(T)$, even though it is really a restriction of a lift.

If $T$ is an invariant tree for~$f$, then any angle assignment $\angle$ on $T$ lifts to another angle assignment $\angle'$ on the same~$T$.  Such an angle assignment is said to be \textit{invariant} if~$\angle=\angle'$.

\p{Fatou vertices/edges and Julia vertices/edges}  If $T$ is an invariant tree with dynamical map $f_*\colon T\to T$, a vertex $v$ of $T$ (possibly unmarked) is said to be a \textit{Fatou vertex} if the forward orbit of $v$ contains a periodic critical point of~$f$.  In the case where $T$ is a Hubbard tree for a polynomial map~$f$, these are precisely the vertices of $T$ that lie in the Fatou set of~$f$.  Vertices of $T$ that are not Fatou vertices are called \textit{Julia vertices}.  An edge is a \textit{Julia edge} if both of its vertices are Julia vertices.   An edge that is not a Julia edge is called a \emph{Fatou edge}.   An edge is \textit{periodic} if there exists a $k\geq 1$ such that $f_*^k$ maps $e$ homeomorphically to itself.

\p{The conditions} Let $(f,A)$ be a marked topological polynomial, and let $T$ be an invariant tree for $(f,A)$.  We will refer to the following conditions on $T$ as Poirier's conditions.  

\medskip

\hspace*{3ex}\emph{Angle condition.} There exists an invariant angle assignment for~$T$.\smallskip

\smallskip

\hspace*{3ex}\emph{Expanding condition.} $T$ has no periodic Julia edges.

\medskip

The main result of this subsection is Proposition~\ref{prop:PoirierConditions}, which states that if an invariant tree for $f$ satisfies the Poirier conditions, then it is the Hubbard tree $H_f$; this is a version of a result of Poirier.   

Consider, for example, the invariant trees shown in Figure~\ref{fig:TreesCubicRabbit}.  The tree $H_f$ is the Hubbard tree, and so it satisfies both of Poirier's conditions.  The tree $T$ satisfies the angle condition but not the expanding condition (specifically, the middle edge fails).  The tree $T'$ satisfies the expanding condition (there are no Julia edges) but not the angle condition; see the end of Section~\ref{sec:p2}.  

Our proof of Proposition~\ref{prop:PoirierConditions} requires the following lemma.

\begin{lemma}
\label{lem:poirier}
Let $(f,A)$ be a marked topological polynomial, where $A$ contains the critical points of~$f$.  An invariant tree $T$ for $f$ in $(\mathbb{R}^2,A)$ is a topological Hubbard 
tree for $f$ if and only if it satisfies the angle condition and the expanding condition.
\end{lemma}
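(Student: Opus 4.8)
The plan is to deduce the lemma from Poirier's characterization of Hubbard trees \cite{poirier}, where the real content is the translation between Poirier's combinatorial framework and ours, supplemented by an application of the Alexander method (Proposition~\ref{prop:alexander}).

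For the forward direction, suppose $T$ is a topological Hubbard tree for $(f,A)$. Both Poirier conditions depend only on the tree $T$, its dynamical map $f_*$, and the cyclic order of edges at vertices, and all of this data is preserved by a Thurston equivalence; hence it suffices to verify the two conditions when $f$ is an honest polynomial $p$ and $T = H_p$. For the angle condition I would produce an invariant angle assignment from the complex dynamics: at a periodic critical (Fatou) vertex the first return map of $p$ is locally conjugate to $z \mapsto z^d$ in Böttcher coordinates, which prescribes the angles between incident internal rays, while at a Julia vertex the incident edges are separated by the finitely many external rays landing there, whose angular gaps determine the angles; invariance then follows from $p(H_p) \subseteq H_p$ (this is essentially Douady--Hubbard \cite{DH1}). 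For the expanding condition, a periodic Julia edge would be a regulated arc in the Julia set carried homeomorphically onto itself by an iterate of $p$, which is incompatible with the expansion of $p$ on its Julia set (there are no periodic arcs). This gives one implication.

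For the converse, suppose $T$ is an invariant tree for $(f,A)$ satisfying both conditions. The data consisting of $T$, the dynamical map $f_*$, an invariant angle assignment $\angle$, and the cyclic orderings of edges is exactly the structure Poirier calls an abstract Hubbard tree; here one must check that invariance of $\angle$ supplies Poirier's normalization at Fatou vertices (the first return map at such a vertex has local degree at least $2$, which forces an invariant angle assignment to rotate and scale angles correctly), and that the expanding condition is Poirier's expansivity requirement. Poirier's realization theorem then produces a post-critically finite polynomial $p$, with a suitable marking $B$, whose Hubbard tree $H_p$ with its dynamical map and angles is combinatorially isomorphic to $(T, f_*, \angle)$. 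If Poirier's statement is already phrased for marked topological polynomials, this step instead directly matches up definitions.

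It remains to upgrade this combinatorial isomorphism to a Thurston equivalence $f \to p$ carrying $T$ to $H_p$, and this is where I would invoke the Alexander method. The tree isomorphism extends to a homeomorphism $h_0 : (\R^2, A) \to (\R^2, B)$ with $h_0(T) = H_p$, and the agreement of dynamical maps gives $h_0 f = p h_0$ on $A$. The key point, and the step I expect to be the main obstacle, is to promote $h_0$ to a homeomorphism $h : (\R^2, f^{-1}(A)) \to (\R^2, p^{-1}(B))$ carrying $f^{-1}(T)$ to $p^{-1}(H_p)$ and satisfying hypotheses (1)--(3) of Proposition~\ref{prop:alexander}. Here the invariant angle assignment does the essential work: its lift to $f^{-1}(T)$ records, at every vertex and at every one of its preimages, the cyclic order of incident edges together with the local degree of $f$, and the corresponding data on the $p$ side is identical by construction, so the two branched covers $f^{-1}(T) \to T$ and $p^{-1}(H_p) \to H_p$ agree combinatorially; this is precisely enough to build $h$ compatibly on the preimage trees and then, after filling in the complementary disks (each carrying a single puncture at $\infty$), on all of $\R^2$. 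Once conditions (1)--(3) are verified, Proposition~\ref{prop:alexander} yields a Thurston equivalence between $f$ and $p$ carrying $T$ to $H_p$, so $T$ is a topological Hubbard tree for $(f,A)$. The delicate point throughout is that the abstract isomorphism of trees-with-self-maps alone does not pin down how $f^{-1}(T)$ is embedded relative to $T$; it is the invariance of the angle assignment that forces $f^{-1}(T)$ and $p^{-1}(H_p)$ to be isotopic in a manner compatible with $h_0$.
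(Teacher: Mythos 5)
Your proposal is correct and follows essentially the same route as the paper: the forward direction is Poirier's theorem that Hubbard trees satisfy both conditions (which you re-derive rather than cite), and the converse combines Poirier's realization theorem with the Alexander method (Proposition~\ref{prop:alexander}) to upgrade the combinatorial match to a Thurston equivalence. Your extra discussion of how the invariant angle assignment pins down the isotopy class of $f^{-1}(T)$ relative to $T$ is a more explicit unpacking of what the paper simply asserts Poirier's theorem delivers, but it is not a different argument.
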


\begin{proof}

Poirier proves \cite[Theorem~~1.1]{poirier} that the Hubbard tree for a marked polynomial (where critical points are marked) satisfies the angle condition and the expanding condition, and it follows immediately that the same holds for topological Hubbard trees.

For the converse, suppose that $T$ is an invariant tree for $(f,A)$ that satisfies the angle condition and the expanding condition.  Since the critical points for $f$ are marked, Poirier's theorem tells us that we can realize $T$ as the Hubbard tree for some marked polynomial.  That is, there exists a marked polynomial~$(g,B)$ and a homeomorphism $h\colon (\mathbb{R}^2,f^{-1}(A))\to(\mathbb{R}^2,g^{-1}(B))$ such that:
\begin{enumerate}
    \item $h(T)$ is the Hubbard tree $H_{(g,B)}$ for $(g,B)$, and\smallskip
    \item $h(f^{-1}(T))$ is isotopic to $g^{-1}(H_{(g,B)})$ in $(\R^2,g^{-1}(B))$.
\end{enumerate}
By the Alexander method (Proposition~\ref{prop:alexander}), it follows that $h$ gives a Thurston equivalence from $f$ and $g$, and therefore $T$ is a topological Hubbard tree for~$(f,A)$.
\end{proof}

We would like to apply Poirier's conditions to the case where $A$ is the post-critical set~$P_f$.  Since $P_f$ does not necessarily contain the critical points of~$f$, this requires the following proposition.

\begin{proposition}\label{prop:PoirierConditions} 
Let $f$ be a post-critically finite topological polynomial, and let $T$ be an invariant tree for~$f$. Then $T$ is a topological Hubbard tree for $f$ if and only if it satisfies the angle condition and the expanding condition.
\end{proposition}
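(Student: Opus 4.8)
The plan is to reduce to Lemma~\ref{lem:poirier} by enlarging the set of marked points so that it contains the critical points of $f$, as that lemma requires. Let $C_f$ denote the (finite) set of critical points of $f$ and set $A=P_f\cup C_f$. Since $f(P_f)\subseteq P_f$ and $f(C_f)\subseteq P_f$, we have $f(A)\subseteq A$, so $(f,A)$ is a marked topological polynomial whose marked set contains every critical point, and Lemma~\ref{lem:poirier} applies to it. The task is then to pass the relevant information back and forth between the markings $P_f$ and $A$.

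The key point is that neither the notion of topological Hubbard tree nor either of the two Poirier conditions really depends on the extra markings. The Hubbard tree of a polynomial $p$ is a union of regulated arcs between points of $P_p$, so its leaves lie in $P_p$, and it is already a legitimate tree in $(\R^2,P_p)$; adjoining the critical points $C_p$ merely subdivides edges at valence-$2$ critical points and leaves the underlying subset of $\R^2$ unchanged. Pulling back along a Thurston equivalence --- and noting that the lift of the conjugating homeomorphism carries critical points to critical points, so that an equivalence $(f,P_f)\sim(p,P_p)$ refines to an equivalence $(f,A)\sim(p,P_p\cup C_p)$ --- the same holds for topological Hubbard trees: an invariant tree $T$ for $f$ is a topological Hubbard tree for $(f,P_f)$ if and only if the tree $T^+$ in $(\R^2,A)$ obtained from $T$ by adjoining $C_f$ as vertices is a topological Hubbard tree for $(f,A)$. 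Likewise, the lifting procedure for angle assignments is computed from the underlying set of the tree together with the local degrees $d_f$, so an invariant angle assignment for $T$ over $P_f$ corresponds to one for $T^+$ over $A$; and since only preperiodic (hence Julia) critical points are adjoined, passing from $T$ to $T^+$ subdivides certain Julia edges but creates no new periodic Julia edge, so $T$ satisfies the expanding condition if and only if $T^+$ does. Granting all of this, Lemma~\ref{lem:poirier} applied to $T^+$ and $(f,A)$ --- together with the two transfers --- yields the proposition.

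The step I expect to be the main obstacle is the hidden hypothesis that $T^+$ makes sense, i.e.\ that every critical point of $f$ actually lies on the invariant tree $T$. This is precisely where invariance is essential. For a critical point $c$ we have $f(c)\in P_f\subseteq T$, so $c$ is a vertex of the preimage tree $f^{-1}(T)$, of valence $d_f(c)\cdot\mathrm{val}\bigl(f(c)\bigr)\geq 2$. I would then argue, using that the convex hull of $P_f$ in $f^{-1}(T)$ is isotopic to $T$, that $c$ cannot lie on a branch of $f^{-1}(T)$ that is discarded in forming this convex hull, so that after replacing $T$ by this convex hull within its isotopy class every critical point lies on $T$ and $T^+$ is defined. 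Establishing this non-pruning statement is the delicate part of the proof; once it is in hand, the routine verifications of the two transfers above complete the reduction to Lemma~\ref{lem:poirier}.
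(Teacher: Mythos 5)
Your reduction is genuinely different from the paper's, and it breaks exactly at the step you flagged: the ``non-pruning statement'' that every critical point of $f$ lies on an invariant tree $T$ is false, so $T^+$ need not exist. A counterexample: let $f(x)=x^3+\beta x^2$ with $\beta>0$ chosen so that the local maximum value $p_2=f(-2\beta/3)=4\beta^3/27$ is a fixed point of $f$ (the equation $16\beta^6/729+4\beta^4/27=1$ has a positive solution). The critical points are $0$, which is fixed of local degree $2$, and $c_2=-2\beta/3$, which maps to the non-critical fixed point $p_2$; hence $P_f=\{0,p_2\}$ and $f$ is post-critically finite. Every tree in $(\R^2,P_f)$ is a single arc from $0$ to $p_2$, and $T=[0,p_2]$ is invariant: $f^{-1}(T)$ is the real interval $[-\beta,p_2]$, i.e.\ the path with consecutive vertices $-\beta$, $c_2$, $0$, $p_2$, and the convex hull of $P_f$ in it is the last edge $[0,p_2]=T$. (This $T$ is in fact the Hubbard tree: both marked points are leaves, so the angle condition is vacuous, and the unique edge is a Fatou edge because $0$ is a periodic critical point, so the expanding condition is vacuous.) Thus $c_2$ sits on a branch of $f^{-1}(T)$ that is pruned when the hull is formed, and $T^+$ cannot be defined. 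Your valence argument does not exclude this: $c_2$ has valence $2$ in $f^{-1}(T)$, but so do interior vertices of pruned branches. The same example defeats your other transfer as well: the Hubbard tree for $(f,P_f\cup C_f)$ is the hull of $P_f\cup C_f$, namely the path through $c_2$, $0$, $p_2$, which is not a subdivision of the Hubbard tree $[0,p_2]$ for $(f,P_f)$.

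The paper sidesteps all of this by taking $A=f^{-1}(P_f)$ and applying Lemma~\ref{lem:poirier} to the tree $f^{-1}(T)$ rather than to $T$ itself. Since every critical value lies in $P_f\subseteq T$, every critical point automatically lies on $f^{-1}(T)$ and is marked in $A$, so no side condition is needed; the remaining work is to transfer the angle and expanding conditions between $T$ and $f^{-1}(T)$, which is routine. If you want to rescue your route, you would have to replace $T^+$ by the hull of $P_f\cup C_f$ in $f^{-1}(T)$ and then control the extra branches reaching out to critical points that miss $T$; that is strictly harder than the paper's choice of marking.
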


\begin{proof}

Let $A = f^{-1}(P_f)$.  Then $(f,A)$ is a marked topological polynomial, and $A$ contains the critical points of~$f$. The tree $T$ is a topological Hubbard tree for $f$ if and only if $f^{-1}(T)$ is a topological Hubbard tree for $(f,A)$, and by Lemma~\ref{lem:poirier} this occurs if and only if $f^{-1}(T)$ satisfies the angle condition and the expanding condition.  Thus it suffices to prove that $T$ satisfies the two conditions if and only if $f^{-1}(T)$ does.

For the angle condition, observe first that any invariant assignment of angles for $f^{-1}(T)$ restricts to an invariant assignment of angles for~$T$ (where each angle of $T$ is associated to an angle of $f^{-1}(T)$ via an isotopy from $T$ to a subtree of $f^{-1}(T)$).  Conversely, given any invariant assignment of angles for $T$ we can take the preimage to obtain an angle assignment for $f^{-1}(T)$, and this is invariant since the lift of an angle assignment on $f^{-1}(T)$ is entirely determined by the restriction of that angle assignment to~$T$. (Since $f^{-1}(T)$ is invariant, every vertex of $f^{-1}(f^{-1}(T))$ that is a vertex of $f^{-1}(T)$ maps to a vertex of~$T$, and we can therefore restrict the angle assignment obtained by lifting from~$T$.)

For the expanding condition, observe that every periodic vertex of $f^{-1}(T)$ is a vertex of~$T$, and therefore any periodic Julia edge of $f^{-1}(T)$ must correspond to an edge of~$T$.  For the converse, if $e$ is a periodic Julia edge in~$T$, then $f_*(e)$ is a single edge in $T$, so $e$ must correspond to an edge of $f^{-1}(T)$, specifically a periodic Julia edge.
\end{proof}

Note that Proposition~\ref{prop:PoirierConditions} does not assume that $f$ is unobstructed.  In the case where $f$ is obstructed, the proposition implies that there cannot exist any invariant tree $T$ for $f$ that satisfies both the angle condition and the expanding condition.


\subsection{Achieving the angle condition}\label{sec:p2}

The goal of this section is to prove the following proposition.

\begin{proposition}\label{prop:ExpandAngleCondition}Let $f$ be a post-critically finite topological polynomial, and let $T$ be an invariant tree for~$f$.  Then there exists an invariant tree $T'$ for $f$ that is a forest expansion of $T$ and satisfies the angle condition.
\end{proposition}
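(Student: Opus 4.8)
The plan is to construct $T'$ together with an explicit invariant angle assignment on it. The idea is to first produce an \emph{invariant generalized angle assignment} on $T$ itself --- one that is permitted to take the value $0$ --- and then to use its zero locus to guide a forest expansion that ``opens up'' each degenerate angle into a genuine edge.

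Since $T$ is invariant, the lifting operation on angle assignments from Section~\ref{sec:p1} (lift to $f^{-1}(T)$ by dividing each angle by the local degree, then restrict to $\lambda_f(T)\simeq T$) defines a self-map $L$ of the set of angle assignments of $T$. First I would enlarge the domain to the set $\mathcal P$ of \emph{generalized} angle assignments, i.e.\ functions $\Theta(T)\to[0,1]$ whose values at the angles around each vertex sum to $1$. This $\mathcal P$ is a nonempty, compact, convex polytope, and since the lifting formula only divides by positive integers and sums, $L$ extends to $\mathcal P$. The key point is that $L$ carries $\mathcal P$ into itself: at a preimage vertex of local degree $d$ the incident edges run $d$ times around the image vertex, so after dividing each lifted angle by $d$ the angles at that vertex again sum to $1$; and passing to the convex hull $\lambda_f(T)$ only amalgamates consecutive angles at a vertex, leaving their total equal to $1$. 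As $L$ is affine and $\mathcal P$ is compact and convex, $L$ has a fixed point $\angle_\ast\in\mathcal P$: an invariant generalized angle assignment.

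Let $Z\subseteq\Theta(T)$ be the set of angles on which $\angle_\ast$ vanishes. If $Z=\emptyset$ then $\angle_\ast$ is an honest invariant angle assignment and we take $T'=T$. In general, I would use $Z$ to blow up $T$: at each vertex $v$ the zero angles in $\Theta(T,v)$ partition the edges at $v$ into maximal cyclic blocks, and one replaces $v$ by a small tree realizing this block structure, introducing a new branch vertex for each block of size larger than one and a new edge between neighboring blocks. This yields a tree $T'$ in $(\R^2,P)$ and a collapsible forest $F$ with $T'/F=T$, so $T'$ is a forest expansion of $T$; and $\angle_\ast$ corresponds under the blow-up to an everywhere-positive angle assignment $\angle'$ on $T'$, since the surviving angles are precisely the positive values of $\angle_\ast$ while every newly created angle is forced to be positive.

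Two things then remain, and together they form the main obstacle. First, one must check that $T'$ is a legitimate tree in $(\R^2,P)$ --- in particular that the blow-up never creates an unmarked vertex of valence at most $2$; this is where the constraint that $\angle_\ast$ sums to $1$ at each vertex (so its zero locus at a vertex cannot be too large), together with some care at marked points, is used, possibly after choosing $\angle_\ast$ judiciously. Second, one must verify that $T'$ is $\lambda_f$-invariant and that $\angle'$ is invariant; for this the point is that the lifted generalized angle assignment $\widetilde{\angle_\ast}$ on $f^{-1}(T)$ vanishes exactly on the $f$-preimages of the angles in $Z$, so the blow-up of $f^{-1}(T)$ guided by \emph{its} zero locus is canonically the preimage $f^{-1}(T')$, and taking convex hulls commutes with the blow-ups. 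Invariance of $\angle_\ast$ then gives $\lambda_f(T')\simeq T'$ with invariant angle assignment $\angle'$. Setting up this dictionary between generalized angle assignments on $T$ and pairs (forest expansion of $T$, honest angle assignment) cleanly, and checking that it intertwines lifting with lifting, is the crux of the argument.
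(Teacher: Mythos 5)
Your strategy is the same as the paper's: find a fixed point of the lifting map on the compact convex set of non-negative (``generalized'') angle assignments, then expand $T$ along the zero angles (the paper calls this a \emph{folding}; see Lemmas~\ref{lem:NonNegativeAnglesExist} and~\ref{lem:FoldingWorks}). Your verification that $T'$ is invariant is also the right idea and matches the paper: the lift $\widetilde{\angle_\ast}$ to $f^{-1}(T)$ vanishes exactly on the preimage $\overline{S}$ of the zero set, the folding of $f^{-1}(T)$ along $\overline{S}$ is $f^{-1}(T')$, and invariance of $\angle_\ast$ forces the convex hull of $P_f$ in $f^{-1}(T')$ to be the folding of $T$ along $S$.

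There is, however, a genuine gap in your final claim that ``$\angle_\ast$ corresponds under the blow-up to an everywhere-positive angle assignment $\angle'$ on $T'$, since \ldots\ every newly created angle is forced to be positive.'' The folding creates new unmarked branch vertices (one for each maximal block of consecutive zero angles), and the angles of $T'$ at these new vertices are simply not determined by $\angle_\ast$: the fixed point gives you positive values only on the angles of $T$ surviving at the \emph{old} vertices. So you still owe an invariant, positive assignment at the new vertices, and nothing in your argument produces one; a priori the fixed-point construction applied to $T'$ could again have zeros there. The paper closes this by splitting $\Theta(T')$ into Fatou and Julia angles, which lifting preserves: Lemma~\ref{lem:InvariantAnglesJuliaVertices} shows an invariant positive assignment on the Julia vertices always exists (periodic Julia vertices in a cycle have equal valence, so one can take equal angles and then lift), while every Fatou vertex of $T'$ is an old vertex, where $\angle_\ast$ is already positive. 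Without this (or some substitute) the angle condition on $T'$ is not established. Your other flagged worry---that the folding might create an unmarked vertex of valence at most $2$---is handled by the structure of the folding itself (each new vertex acquires at least two old edges plus the new edge), so the Fatou/Julia step is the one piece of the paper's argument your outline is actually missing.
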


The invariant trees $T$ and $T'$ in Figure~\ref{fig:TreesCubicRabbit} illustrate the proposition.  The tree $T'$ fails the angle condition, and the tree $T$---which is obtained from $T'$ by expanding two edges---does satisfy the angle condition.

Our proof of the proposition will require us to consider two different types of angles.  We begin by discussing these.

\p{Angles at Fatou and Julia vertices}
If $T$ is an invariant tree for a post-critically finite topological polynomial~$f$, then we can partition the angles of $T$ into the disjoint union of two sets
\[
\Theta(T) = \Theta_F(T) \uplus \Theta_J(T)
\]
where $\Theta_F(T)$ and $\Theta_J(T)$ are the sets of angles at the Fatou vertices and Julia vertices, respectively.  Since every vertex in the preimage of a Fatou vertex is a Fatou vertex  and every vertex in the preimage of a Julia vertex is a Julia vertex, we can lift angle assignments separately on these two sets.  Thus a tree $T$ satisfies the angle condition if and only if it has both an invariant angle assignment on its Fatou vertices and an invariant angle assignment on its Julia vertices.

The following lemma is essentially due to Poirier \cite[Section 1]{poirier}.

\begin{lemma}\label{lem:InvariantAnglesJuliaVertices}Let $f$ be a post-critically finite topological polynomial, and let $T$ be an invariant tree for~$f$.  Then $T$ has an invariant angle assignment on its Julia vertices.
\end{lemma}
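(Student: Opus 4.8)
The plan is to construct an invariant angle assignment on the Julia vertices directly, by averaging over forward orbits, exploiting the fact that the collection of Julia vertices is finite and that the lifting operation on angles at Julia vertices involves no division (since Julia vertices are not in the forward orbit of a periodic critical point, the relevant local degrees that matter for \emph{invariance} are more tractable than at Fatou vertices). First I would set up the finite-dimensional linear-algebra picture: let $V$ be the real vector space of all functions $\Theta_J(T) \to \R$, and note that the operation $\angle \mapsto \angle'$ of lifting an angle assignment and restricting it back to $T$ (legitimate since $T$ is invariant, so every Julia vertex of $f^{-1}(T)$ lying on $T$ maps to a Julia vertex of $T$) is the restriction to $V$ of a linear map $L \colon V \to V$. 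The constraint that the angles around each Julia vertex sum to $1$ is an affine condition; the affine subspace $S \subseteq V$ it cuts out is nonempty, compact, and convex, and I would check that $L$ preserves $S$. Indeed each angle of $T'$ at a Julia vertex $v$ is a sum of angles of $f^{-1}(T)$ at $v$, and each of the latter is $\angle(f(e_1),f(e_2))$ divided by the local degree $d_f(v)$; since the angles of $T$ around the vertex $f(v)$ sum to $1$ and $f$ restricted to a small disk around $v$ is $d_f(v)$-to-one onto a small disk around $f(v)$, the angles of $f^{-1}(T)$ around $v$ sum to $1$, hence so do those of $T'$.

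With $L\colon S \to S$ an affine self-map of a nonempty compact convex subset of a finite-dimensional real vector space, the Brouwer fixed point theorem (or, since $L$ is affine, an even more elementary averaging argument) produces a fixed point $\angle^* \in S$, i.e.\ $L(\angle^*) = \angle^*$. This $\angle^*$ is by construction an invariant angle assignment on the Julia vertices, provided I also verify positivity: an \emph{angle assignment} must take values in $(0,1]$, not merely be nonnegative and sum to $1$ around each vertex. To handle this I would first note that if $\angle$ is strictly positive then so is $L(\angle)$ (a sum of positive numbers is positive, and division by $d_f(v) \geq 1$ preserves positivity), so $L$ maps the relatively open subset $S^\circ = S \cap (0,1]^{\Theta_J(T)}$ into itself; the issue is only that $S^\circ$ is not compact. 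The cleanest fix is the averaging argument: start from \emph{any} strictly positive $\angle_0 \in S^\circ$ and form the Cesàro averages $\angle_N = \frac{1}{N}\sum_{k=0}^{N-1} L^k(\angle_0)$; these all lie in $S^\circ$ (each summand does, and $S^\circ$ is convex), a subsequence converges in the compact set $S$ to some $\angle^* = \lim \angle_{N_j}$, and $\angle^*$ is $L$-fixed by the standard telescoping estimate $\|L(\angle_N) - \angle_N\| = \frac{1}{N}\|L^N(\angle_0) - \angle_0\| \to 0$ together with continuity of $L$. It remains to upgrade $\angle^* \geq 0$ to $\angle^* > 0$.

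The main obstacle, then, is exactly this positivity upgrade: ruling out that some angle of the fixed assignment $\angle^*$ is zero. I expect to resolve it by a propagation/irreducibility argument along the dynamics. Suppose $\angle^*(\theta) = 0$ for some angle $\theta$ at a Julia vertex $v$. Since $\angle^*$ is $L$-fixed, $\angle^*(\theta)$ equals a sum of terms $\angle^*(\eta)/d_f(\text{preimage vertex})$ over angles $\eta$ of $f^{-1}(T)$ that fuse to $\theta$, so all of those are zero too; unwinding the definition of the lift, each such $\eta$ has the form $(\tilde e_1, \tilde e_2)$ with $\angle^*(f(\tilde e_1), f(\tilde e_2)) = 0$ — that is, the zero set of $\angle^*$ is forward-invariant in a suitable sense under passing between $v$ and points of $f(v)$'s angle set. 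Because $T$ is a finite tree, every Julia vertex is pre-periodic, and I would chase the vanishing around a cycle to derive that an entire "block" of consecutive angles around some periodic Julia vertex must vanish — but the angles around any vertex sum to $1$, so they cannot all vanish, and a more careful bookkeeping (using that the dynamical map $f_*$ is surjective on $T$ and locally injective, hence angle-nondecreasing, at Julia vertices) should force a contradiction. Poirier's treatment in \cite[Section 1]{poirier} handles precisely this point, so I would cite and adapt his argument rather than reproduce the combinatorics in full; the statement that this lemma is "essentially due to Poirier" signals that the intended proof is exactly to quote his construction of invariant angles at Julia (repelling-periodic) vertices.
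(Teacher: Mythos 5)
Your averaging setup is fine as far as it goes, but the positivity upgrade --- which you correctly identify as the crux --- is not actually carried out, and the contradiction you sketch does not work as stated. At a periodic Julia vertex $v$ of period $p$, the $p$-fold lift acts on $\Theta(T,v)$ as a permutation (see below), so an invariant assignment need only be constant on each orbit of that permutation; chasing a single vanishing angle around the cycle therefore forces vanishing only on one orbit, not on all of $\Theta(T,v)$, and the ``angles sum to $1$'' objection does not apply. Deferring to Poirier at exactly this point means the essential content of the lemma is left unproved.

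The fact that rescues both your argument and the lemma is the one the paper's proof is built on: a \emph{periodic} Julia vertex cannot be a critical point (a periodic critical point would make it a Fatou vertex), so $f_*$ is locally injective along every periodic cycle of Julia vertices, valences are non-decreasing along orbits and hence constant around each such cycle, and consequently the lift at these vertices involves no division by a local degree and no fusing of angles --- it is just a relabeling bijection $\Theta(T,f_*(v))\to\Theta(T,v)$. The paper exploits this directly and constructively: assign the uniform measure $1/\mathrm{val}(v)$ to every angle at every periodic Julia vertex (invariant because the cyclic relabeling preserves the uniform assignment), then extend to the strictly preperiodic Julia vertices by pulling back, which terminates since every Julia vertex of the finite tree is preperiodic. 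No fixed-point theorem and no positivity analysis are needed. If you insist on the Ces\`aro route, the same observation is what you need: on the periodic part your averages are averages of a strictly positive initial assignment over a permutation orbit, hence bounded below, and pullback (a nonempty sum of positive terms divided by a positive integer) preserves strict positivity at the preperiodic vertices; but at that point you have reproved the paper's lemma inside your limit argument, so the detour through Brouwer buys nothing here. (The paper does use the Brouwer argument, but in the separate Lemma on non-negative assignments for the whole tree, where zeros are then eliminated by folding rather than by a positivity claim.)
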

\begin{proof}Let $f_*\colon T\to T$ be the dynamical map. If $v$ is any vertex of $T$ that is not a critical point, then $f_*$ must be locally one-to-one in a neighborhood of~$v$, and hence the degree of $f_*(v)$ is greater than or equal to the degree of~$v$.  It follows that all of the vertices in each periodic cycle of Julia vertices must have the same degree.  This means that we can construct an invariant angle assignment on the periodic Julia vertices by setting the angle measures at each such vertex to be all equal.  We can then extend the angle assignment to the remaining Julia vertices by lifting. \end{proof}

It follows from Lemma~\ref{lem:InvariantAnglesJuliaVertices} that an invariant tree $T$ satisfies the angle condition if and only if it has an invariant angle assignment on its Fatou vertices.

\p{Non-negative angle assignments}
Recall that an angle assignment $\angle\colon \Theta(T)\to (0,1]$  on a tree $T$ must be \textit{positive}, in the sense that all of the angles are required to have positive measure. A \textit{non-negative angle assignment} $\angle\colon \Theta(T)\to [0,1]$ is similar to an angle assignment, except that angles are allowed to have measure 0.  Note that we still require the angles at each vertex of a non-negative angle assignment to add up to 1.  Our definitions of lifting and invariance generalize to the case of non-negative angle assignments.

\begin{lemma}\label{lem:NonNegativeAnglesExist}Let $f$ be a post-critically finite topological polynomial, and let $T$ be an invariant tree for~$f$.  Then $T$ has an invariant non-negative angle assignment.
\end{lemma}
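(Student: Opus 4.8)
The plan is to produce the invariant non-negative angle assignment as a fixed point of the lifting operation, using Brouwer's fixed point theorem. Since $T$ is invariant I would first fix, once and for all, an ambient isotopy carrying $\lambda_f(T)$ onto $T$; this identifies the angles $\Theta(\lambda_f(T))$ with $\Theta(T)$, so that the lifting procedure of Section~\ref{sec:p1} (extended to non-negative angle assignments, as set up just above) becomes an operation $L$ sending a non-negative angle assignment on $T$ to another one on $T$, and an invariant non-negative angle assignment is precisely a fixed point of $L$. The set of all non-negative angle assignments on $T$ is the nonempty compact convex polytope
\[
\mathcal{P} \;=\; \prod_{v} \Delta_v,
\]
where $v$ ranges over the vertices of $T$ and $\Delta_v$ is the simplex of non-negative functions on $\Theta(T,v)$ summing to $1$.

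Next I would check the two properties needed to apply Brouwer. First, that $L$ maps $\mathcal{P}$ into itself: non-negativity of $L(\angle)$ is immediate, since lifting to $f^{-1}(T)$ divides non-negative numbers by the positive integers $d_f(v)$ and restricting to $\lambda_f(T)$ sums non-negative numbers, while the normalization is the usual bookkeeping — at a vertex $\tilde v$ of $f^{-1}(T)$ with image $v$ and local degree $d = d_f(\tilde v)$, one loop around $\tilde v$ covers $d$ loops around $v$, so the lifted angles at $\tilde v$ sum to $d\cdot(1/d)=1$, and passing from $f^{-1}(T)$ to $\lambda_f(T)$ only amalgamates consecutive angles at each surviving vertex, which leaves the sums equal to $1$. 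Second, that $L$ is continuous: both steps of the lift are given by linear formulas in the angle measures, so $L$ is the restriction to $\mathcal{P}$ of a linear map on $\R^{\Theta(T)}$ and in particular is continuous. Brouwer's theorem then yields $\angle \in \mathcal{P}$ with $L(\angle)=\angle$, which, after undoing the identification of $\lambda_f(T)$ with $T$, is the desired invariant non-negative angle assignment.

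The step I expect to demand the most care is verifying that $L$ genuinely lands back in $\mathcal{P}$ — in particular the local-degree and angle-merging accounting at the preimage vertices and across the forest expansion relating $\lambda_f(T)$ to $f^{-1}(T)$; everything else is formal. I should also note an alternative organization: one could invoke Lemma~\ref{lem:InvariantAnglesJuliaVertices} to dispose of the Julia vertices and run Brouwer only on the Fatou sub-polytope $\prod_{v\ \mathrm{Fatou}}\Delta_v$, which $L$ also preserves because preimages of Fatou vertices are Fatou; but since this lemma only asks for non-negativity (the positivity needed for the full angle condition is recovered later via forest expansion in Proposition~\ref{prop:ExpandAngleCondition}), applying Brouwer to all of $\mathcal{P}$ at once is the cleanest route.
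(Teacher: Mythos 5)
Your argument is the same as the paper's: both realize the set of non-negative angle assignments as a compact product of simplices, observe that lifting gives a continuous (linear) self-map of this set, and apply Brouwer's fixed point theorem. The extra bookkeeping you supply (that lifted angles at a preimage vertex sum to $1$ and that amalgamation preserves the sums) is correct and is exactly what the paper leaves implicit.
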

\begin{proof}Let $\bar{A}(T)\subseteq [0,1]^{\Theta(T)}$ be the set of all non-negative angle assignments for~$T$.  Observe that for each vertex $v$ of $T$ of valence~$k$, the set of possible non-negative angle assignments at $v$ is a closed $(k-1)$-simplex in $[0,1]^{\Theta(T,p)}$.  It follows that $\bar{A}(T)$ is a product of closed simplices, and is therefore homeomorphic to a closed, finite-dimensional ball.  Lifting defines a continuous function $L\colon \bar{A}(T)\to \bar{A}(T)$, so by Brouwer's fixed point theorem $L$ has a fixed point in~$\bar{A}(T)$. 
\end{proof}

The function $L$ in the proof of Lemma~\ref{lem:NonNegativeAnglesExist} is linear, and the fixed point of $L$ is an eigenvector with eigenvalue 1.  So, given Lemma~\ref{lem:NonNegativeAnglesExist}, the fixed point can be computed explicitly by linear algebra.  

Given an invariant tree $T$ with an invariant non-negative angle assignment, our strategy for proving Proposition~\ref{prop:ExpandAngleCondition} will be to modify $T$ to eliminate the angles that are assigned an angle measure of 0 by the map $L$.

\p{Foldings}
Let $T$ be a tree in $(\R^2,P)$ and let $v$ be a vertex of~$T$.  Given any proper subset $S \subset \Theta(T,v)$, the associated \textit{folding} of $T$ along $S$ is the tree obtained by identifying initial segments of pairs of edges adjacent to~$S$, as shown in Figure~\ref{fig:StarExpansion}.  Note that such a folding is actually a forest expansion of $T$ at $v$, with one new vertex-edge pair for each maximal set of consecutive angles in~$S$ (e.g.~$\{\theta_1\}$, $\{\theta_4,\theta_5\}$, and $\{\theta_7\}$ in Figure~\ref{fig:StarExpansion}).  The resulting angles of $\Theta(T',v)$ correspond to the angles of $\Theta(T,v)$ that do not lie in~$S$. 
\begin{figure}
\centering
\raisebox{-0.47\height}{\includegraphics{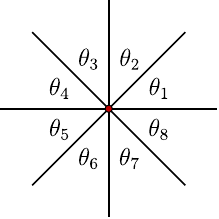}}
$\quad\qquad\longrightarrow\quad\qquad$
\raisebox{-0.47\height}{\includegraphics{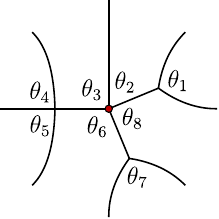}}
\caption{Folding at a vertex of valence 8 along the set~$\{\theta_1,\theta_4,\theta_5,\theta_7\}$}
\label{fig:StarExpansion}
\end{figure}

More generally, given a tree $T$ and a subset $S\subseteq \Theta(T)$ that does not include all the angles at any vertex, the \textit{folding} of $T$ along $S$ is the tree obtained by folding along $S\cap \Theta(T,v)$ at each vertex $v$ of~$T$. Again, note that such a folding is actually a forest expansion of~$T$.  

\begin{lemma}\label{lem:FoldingWorks}Let $f$ be a post-critically finite topological polynomial, and let $T$ be an invariant tree for~$f$. Let $\angle$ be an invariant non-negative angle assignment for~$T$, let
\[
S = \{\theta\in \Theta(T) \mid \angle(\theta)=0\}
\]
and let $T'$ be the folding of $T$ along $S$.  Then $T'$ is an invariant tree for $f$ that satisfies the angle condition.
\end{lemma}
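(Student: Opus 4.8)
The plan is to check three things, in order: that $T'$ is a legitimate tree in $(\R^2,P)$; that $T'$ is invariant for $f$; and that $T'$ satisfies the angle condition. The first point is essentially formal, since a folding is a forest expansion: $T'$ automatically satisfies conditions (1) and (2) in the definition of a tree in $(\R^2,P)$, and one need only check condition (3), that the folding produces no unmarked vertex of valence $\leq 2$. This follows from the structure of the particular invariant non-negative angle assignment $\angle$ being used (collapsing the zero angles at an unmarked vertex leaves enough edges).

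For invariance, the key point is that folding commutes with lifting. First I would argue that
\[
f^{-1}(T') \;\simeq\; \bigl(\text{folding of } f^{-1}(T) \text{ along } \widetilde S\,\bigr),
\]
where $\widetilde S$ is the zero set of the lifted angle assignment $\widetilde\angle$ on $f^{-1}(T)$. Indeed, since $\lambda_f$ is simplicial, the forest collapse $T' \to T$ lifts to a forest collapse $f^{-1}(T') \to f^{-1}(T)$, so $f^{-1}(T')$ is some forest expansion of $f^{-1}(T)$; and because $f$ is a local homeomorphism away from its critical points (and modeled on $z \mapsto z^{d}$ at a critical point), the folding pattern of $T$ along $S$ pulls back, vertex by vertex, to the folding pattern of $f^{-1}(T)$ along the full preimage of $S$. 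By the defining formula $\widetilde\angle(e_1,e_2) = \angle(f(e_1),f(e_2))/d_f(v)$, that full preimage is exactly $\widetilde S$.

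Next I would establish that taking the convex hull of $P$ also commutes with folding along a vanishing locus. Writing $Y = f^{-1}(T)$ and $X = \mathrm{hull}_P(Y) = \lambda_f(T)$, the claim is that
\[
\mathrm{hull}_P\bigl(\text{folding of } Y \text{ along } \widetilde S\,\bigr) \;\simeq\; \text{folding of } X \text{ along } S_X,
\]
where $S_X$ is the set of angles of $X$ whose constituent $Y$-angles all lie in $\widetilde S$. This is a local check at each vertex: folding a block of consecutive edges disjoint from $X$ leaves the hull unchanged up to isotopy, while folding a block meeting $X$ folds $X$ along the corresponding angle; the matching of $\widetilde S$ with $S_X$ uses only that angle measures are non-negative and add up when edges are amalgamated. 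Finally, invariance of $\angle$ means that the lift-and-restrict of $\angle$ to $\lambda_f(T) \simeq T$ is again $\angle$; hence, transported along this isotopy, $S_X$ is precisely the zero set $S$ of $\angle$ on $T$. Combining the three identifications,
\[
\lambda_f(T') = \mathrm{hull}_P\bigl(f^{-1}(T')\bigr) \simeq \text{folding of } \lambda_f(T) \text{ along } S_X \simeq \text{folding of } T \text{ along } S = T',
\]
so $T'$ is invariant.

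Finally, for the angle condition, let $\angle'$ be the assignment on $T'$ obtained by transporting $\angle$ across the natural bijection between the angles of $T'$ and the nonzero angles of $T$; it is strictly positive by construction. The same naturality used above shows that lifting $\angle'$ through $f$ and restricting to $\lambda_f(T') \simeq T'$ amounts to lifting $\angle$, restricting to $\lambda_f(T) \simeq T$, and discarding the zeros, which returns $\angle$ by invariance and hence returns $\angle'$. So $\angle'$ is an invariant positive angle assignment and $T'$ satisfies the angle condition. The main obstacle in all of this is the two commutation statements in the invariance step: both are transparent from the local pictures, but writing them carefully requires tracking cyclic orders of edges at each vertex, the local degrees of $f$, and exactly which angles get amalgamated on passing to the convex hull.
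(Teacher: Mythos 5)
Your treatment of invariance is correct and is essentially the paper's argument: identify $f^{-1}(T')$ with the folding of $f^{-1}(T)$ along the zero set $\widetilde S$ of the lifted assignment, use invariance of $T$ to see that the hull of $P_f$ in $f^{-1}(T')$ is a folding of $T$, and use invariance of $\angle$ to pin that folding down as the folding along $S$. You supply more detail on the two commutation statements than the paper does, but the route is the same.

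The angle-condition step, however, has a genuine gap. There is no ``natural bijection between the angles of $T'$ and the nonzero angles of $T$.'' Each maximal block of $k$ consecutive zero angles at a vertex $v$ produces a \emph{new} vertex of $T'$ of valence $k+2$; at that new vertex, $k$ of the angles are the old zero angles of the block and the remaining two (the ones adjacent to the new edge joining back to $v$) are genuinely new. So $T'$ has strictly more angles than $T$ has nonzero angles, and the only natural transport of $\angle$ to a new vertex assigns measure $0$ to $k$ of its angles --- which is exactly what a positive angle assignment forbids. Your $\angle'$ is therefore not a positive angle assignment on all of $T'$, and the final ``lift and discard the zeros'' computation does not produce one either. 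The paper's proof avoids this by splitting the angle condition by vertex type: the transported assignment is positive and invariant at every \emph{old} vertex of $T'$, all Fatou vertices of $T'$ are old, and an invariant positive assignment on the Julia vertices (which include all the new vertices) always exists by Lemma~\ref{lem:InvariantAnglesJuliaVertices}. You need that lemma, or some substitute for it, to finish; transporting $\angle$ alone cannot work at the new vertices.
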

\begin{proof}Let $\widetilde{\angle}$ be the lift of $\angle$ to $f^{-1}(T)$, and let 
\[
\overline{S} = \{\theta\in \Theta(f^{-1}(T)) \mid \widetilde{\angle}(\theta)=0\}.
\]
We have that $f^{-1}(T')$ is isotopic to the folding of $f^{-1}(T)$ along $\overline{S}$.  Since $T$ is an invariant tree for $f$, the convex hull of $P_f$ in $f^{-1}(T)$ is~$T$, so the convex hull of $P_f$ in $f^{-1}(T')$ must be a folding of~$T$.  But since $\angle$ is invariant, the angles of $T$ that are obtained by joining together angles of $\overline{S}$ are precisely the angles of~$S$.  It follows that the convex hull of $P_f$ in $f^{-1}(T')$ is precisely the folding of $T$ along~$S$, and therefore $T'$ is an invariant tree for~$f$.

Now, $T'$ has some ``old'' vertices that come from~$T$, while other vertices of $T'$ are ``new'' vertices that result from the folding.  The original non-negative angle assignment $\angle$ induces a positive angle assignment on each of the old vertices of~$T'$.  In particular, since all of the Fatou vertices of $T'$ are old, we have a positive, invariant angle assignment on all of the Fatou vertices of~$T$.  By Lemma~\ref{lem:InvariantAnglesJuliaVertices}, this extends to an invariant angle assignment on all of~$T$.
\end{proof}

\p{Completing the proof} We are now ready to prove Proposition~\ref{prop:ExpandAngleCondition}, which states that invariant trees have forest expansions with invariant angle assignments.  

\begin{proof}[Proof of Proposition~\ref{prop:ExpandAngleCondition}]By Lemma~\ref{lem:NonNegativeAnglesExist}, there exists a non-negative angle assignment $\angle$ for~$T$.  By Lemma~\ref{lem:FoldingWorks}, we can use $\angle$ to produce a folding $T'$ of $T$ which is invariant for~$f$ and satisfies the angle condition.
\end{proof}

We can illustrate Proposition~\ref{prop:ExpandAngleCondition} with our running example $f(z)=z^3-3z/4+i\sqrt{7}/4$ from Section~\ref{sec:alex}.  Let $T'$ be the invariant tree for $f$ shown in Figure~\ref{fig:TreesCubicRabbit}(c).  An angle assignment for $T'$ is a tuple $(\theta_1,\theta_2,\theta_3,\theta_4,\theta_5,\theta_6)$ of angles as shown in Figure~\ref{fig:InvariantTreeAngles}(a), where
\[
\theta_1=\theta_2+\theta_3=\theta_4+\theta_5=\theta_6=1.
\]
In particular, the polyhedron $\bar{A}(T')\subseteq [0,1]^6$ of non-negative angle assignments is a square. 

The lifting map $\bar{A}(T')\to \bar{A}(T')$ on angles is
\[
(\theta_1,\theta_2,\theta_3,\theta_4,\theta_5,\theta_6) \mapsto \bigl(\theta_6,\theta_4+\tfrac{1}{2}\theta_5,\tfrac{1}{2}\theta_5,\theta_2,\theta_3,\theta_1\bigr)
\]
as shown in parts (b) and (c) of Figure~\ref{fig:InvariantTreeAngles}. This map has a unique fixed point in $\bar{A}(T')$, namely  the point $(\theta_1,\theta_2,\theta_3,\theta_4,\theta_5,\theta_6)=(1,1,0,1,0,1)$.  Since $\theta_3$ and $\theta_5$ are $0$, we can fold along these angles to obtain a new invariant tree that satisfies the angle condition.  Specifically, we obtain the tree $T$ shown in Figure~\ref{fig:TreesCubicRabbit}(b), which satisfies the angle condition since all of its Fatou vertices are leaves.
\begin{figure}[t]
\centering
$\underset{\textstyle\text{(a)}}{\includegraphics{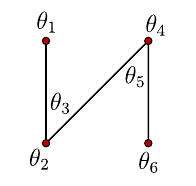}}$
\hfill
$\underset{\textstyle\text{(b)}}{\includegraphics{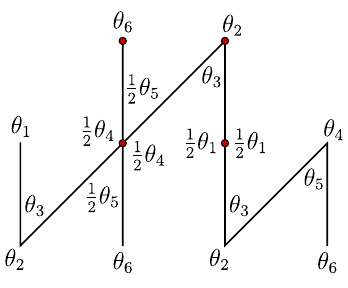}}$
\hfill
$\underset{\textstyle\text{(c)}}{\includegraphics{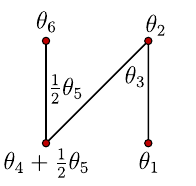}}$
\caption{(a)~An angle assignment for the tree~$T'$. (b)~The lift to $f^{-1}(T')$. (c)~The restriction of the lift to~$T'$}
\label{fig:InvariantTreeAngles}
\end{figure}


\subsection{Achieving the expanding condition}\label{sec:p3}

Our next goal is to prove the following proposition.

\begin{proposition}\label{prop:InvariantWithinTwo}Let $f$ be an unobstructed post-critically finite topological polynomial with topological Hubbard tree $H_f$, and let $T$ be an invariant tree for~$f$ that satisfies the angle condition.  Then $T$ is a forest expansion of~$H_f$.
\end{proposition}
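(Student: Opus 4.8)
The plan is to show that an invariant tree $T$ satisfying the angle condition admits a forest collapse onto a tree satisfying both Poirier conditions; by Proposition~\ref{prop:PoirierConditions} that collapsed tree must be $H_f$, so that $T$ is a forest expansion of $H_f$. The obstacle to $T$ itself satisfying Poirier's conditions is precisely the possible presence of periodic Julia edges, since the angle condition is assumed. So the strategy is to locate the ``bad'' edges, collapse exactly those, and check that nothing breaks.

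First I would analyze the periodic Julia edges of $T$. An edge $e$ is periodic Julia if $f_*^k$ maps $e$ homeomorphically to itself for some $k$ and both endpoints are Julia vertices. The key structural claim is that the union of all periodic Julia edges, together with the edges that eventually map into them, forms a collapsible subforest: each component contains at most one marked point. For this I would use that on a periodic Julia cycle of edges the dynamical map acts by a (cyclic) homeomorphism permuting them, so along such a cycle no two distinct post-critical points can be separated by non-collapsed edges only if there were a Levy cycle hidden in the complement; this is where unobstructedness of $f$ enters decisively. More precisely, if a periodic Julia edge $e$ had both endpoints carrying marked points (or if collapsing the periodic-Julia subforest merged two marked points), one could build from the $f_*$-orbit of $e$ a Levy cycle for $f$, contradicting the hypothesis that $f$ is unobstructed. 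Thus the periodic Julia edges (and their backward orbit within $T$ under $f_*$) can be collapsed.

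Next I would let $T_0 = T/F$ where $F$ is that collapsible subforest, and verify: (i) $T_0$ is still an invariant tree for $f$ — because forest collapses and the lifting map interact well, and because $F$ is chosen to be ``$f$-invariant'' in the appropriate sense, so the convex hull of $P_f$ in $f^{-1}(T_0)$ is again $T_0$; (ii) $T_0$ still satisfies the angle condition — an invariant angle assignment on $T$ descends to one on $T_0$, since collapsing edges only amalgamates angles at the surviving vertices, and the lifting of angle assignments is compatible with this amalgamation (the argument is parallel to the angle-condition half of the proof of Proposition~\ref{prop:PoirierConditions}); and (iii) $T_0$ has no periodic Julia edges, which is the whole point of the collapse — I would need to argue that collapsing $F$ does not create new periodic Julia edges, i.e.\ that an edge of $T_0$ which becomes periodic Julia under the induced dynamics lifts back to a periodic Julia edge of $T$ that should have been in $F$. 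With (i)--(iii) in hand, Proposition~\ref{prop:PoirierConditions} forces $T_0 = H_f$, and since $T$ collapses onto $T_0$, equivalently $T$ is a forest expansion of $H_f$.

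The main obstacle I anticipate is step (i) combined with the claim that $F$ is collapsible: one must rule out that collapsing the periodic Julia edges (and their pullbacks inside $T$) identifies two distinct post-critical points, and the natural way to do that is to extract a Levy cycle from a violating configuration. Making that extraction precise — producing from a periodic cycle of Julia edges joining marked points an actual collection of curves $\{c_0,\dots,c_{k-1}\}$ with the required degree-one preimage condition — is the delicate part and is where the hypothesis ``unobstructed'' must be used; everything else is bookkeeping about how forest collapses, dynamical maps, and angle assignments interact, all of which parallels arguments already appearing in Sections~\ref{sec:p1} and~\ref{sec:p2}.
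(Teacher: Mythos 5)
Your overall strategy matches the paper's essentially verbatim: take $F'$ to be the smallest invariant subforest of $T$ containing the periodic Julia edges, use unobstructedness to rule out a component of $F'$ with two marked points via a Levy cycle built from the $f_*$-orbit of that component, deduce that $F'$ is collapsible, and check that the collapsed tree $T/F'$ satisfies Poirier's conditions and is therefore $H_f$. This is exactly Lemma~\ref{lem:ContractExpandingCondition} together with the short proof of Proposition~\ref{prop:InvariantWithinTwo} in the paper, and you have correctly flagged the two genuinely delicate points (extracting the Levy cycle, and showing that no new periodic Julia edge is created in the quotient) that take up most of the work in that lemma.

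There is, however, a gap in your step (ii). You assert that ``an invariant angle assignment on $T$ descends to one on $T_0$, since collapsing edges only amalgamates angles at the surviving vertices.'' This is false as stated: if you collapse an edge $e$ joining Julia vertices $v_1$ and $v_2$ of valences $k_1$ and $k_2$, the merged vertex has valence $k_1+k_2-2$, and the natural amalgamation of angle measures --- keeping the $k_i-2$ angles at $v_i$ not incident to $e$ and fusing the remaining ones across $e$ --- produces measures that sum to $2$, not to $1$, since each of $v_1$ and $v_2$ contributed a full unit. So the na\"ive descent is not an angle assignment, and no parallel of the $T\subseteq f^{-1}(T)$ restriction argument (which only discards edges, never merges vertices) applies. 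The paper's fix exploits the fact that $F'$ consists entirely of Julia edges and hence contains no Fatou vertices, so the Fatou vertices of $T$ survive the collapse uncollapsed: the invariant angle assignment descends \emph{only} on the Fatou vertices, and then Lemma~\ref{lem:InvariantAnglesJuliaVertices} is invoked to manufacture an invariant angle assignment on the Julia vertices from scratch (equal subdivision on periodic Julia cycles, then lift to pre-periodic ones). That lemma is the missing ingredient you need to complete your outline.
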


The invariant tree $T$ in Figure~\ref{fig:TreesCubicRabbit} illustrates the proposition.  The tree $T$ fails the expanding condition, and the Hubbard tree $H_f$ is obtained from $T$ by an edge collapse.  

Before proceding to the proof of Proposition~\ref{prop:InvariantWithinTwo}, we begin with some preliminaries.

If $F$ is a subforest of~$T$, its \textit{lift} is the subforest $F'$ of $\lambda_f(T)$ consisting of all edges of $\lambda_f(T)$ that map entirely into $F$ under~$f$.  Assuming $F$ is collapsible, the lift  $F'$ is precisely the subforest of $\lambda_f(T)$ for which $\lambda_f(T)/F' = \lambda_f(T/F)$.  It follows that $F'$ must be collapsible whenever~$F$ is.

If $T$ is an invariant tree for $f$, then a subforest $F$ of $T$ is \textit{invariant} if it is equal to its own lift.  If $F$ is invariant and collapsible, then the quotient $T/F$ is again an invariant tree for~$f$.  Note that a subforest $F$ of $T$ satisfying $f_*(F)\subseteq F$ is not necessarily invariant, since its lift may properly contain~$F$.  In this case, the subforest $F'$ consisting of all edges that eventually map into~$F$ under $f_*$ is invariant, and is the smallest invariant subforest of $T$ that contains~$F$.

\begin{lemma}\label{lem:ContractExpandingCondition}Let $f$ be an unobstructed post-critically finite topological polynomial, and let $T$ be an invariant tree for~$f$.  Let $F$ be the union of all periodic Julia edges of~$T$, and let $F'$ be the smallest invariant subforest of $T$ that contains~$F$.  Then $F'$ is collapsible and the quotient $T/F'$ satisfies the expanding condition.
\end{lemma}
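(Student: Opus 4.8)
The plan is to show two things separately: first that $F'$ is collapsible, and second that $T/F'$ has no periodic Julia edges. For collapsibility, recall that $F$, the union of all periodic Julia edges, is collapsible: a periodic Julia edge $e$ has both endpoints Julia vertices, and Julia vertices are never in the post-critical set $P_f$ (they are not in the forward orbit of a periodic critical point, but more to the point, the critical values and their orbits pass through Fatou vertices — actually the key fact is that marked points whose orbit hits a periodic critical point are Fatou, so any marked leaf forces a Fatou vertex, and in any case we must verify no component of $F$ contains two points of $P_f$). The cleaner route: since $f$ is unobstructed, the only way a subforest fails to be collapsible is if some component contains $\geq 2$ marked points, which would give an arc between marked points collapsing to a point — but since $F$ consists of periodic Julia edges and Julia vertices are unmarked when they are not post-critical, I would argue that a bi-infinite periodic chain of Julia edges connecting two post-critical points would itself generate a Levy cycle (the boundary of a neighborhood of that chain), contradicting unobstructedness. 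Then, using that $F$ is collapsible, the preliminary discussion preceding the lemma gives that the lift of a collapsible forest is collapsible, hence by induction the increasing union $F' = \bigcup_k (\text{edges eventually mapping into } F)$ is collapsible (each stage is collapsible, and the stages stabilize since $T$ is finite).

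For the expanding condition on $T/F'$, suppose toward a contradiction that $\bar e$ is a periodic Julia edge of $T/F'$. Since $F'$ is an invariant collapsible subforest, $T/F'$ is an invariant tree for $f$ (as noted in the preliminaries), and its dynamical map $\bar f_* \colon T/F' \to T/F'$ is induced from $f_* \colon T \to T$. The edge $\bar e$ lifts to a unique edge $e$ of $T$ not in $F'$, with both endpoints Julia vertices (Julia/Fatou status is preserved under the quotient since $F'$ is collapsible and collapsing does not merge a Fatou orbit with a Julia orbit). Because $\bar e$ is periodic, $\bar f_*^{\,k}$ maps $\bar e$ homeomorphically to $\bar e$ for some $k \geq 1$; pulling this back, $f_*^{\,k}(e)$ is a path in $T$ from $e$'s image whose image in $T/F'$ is $\bar e$, so $f_*^{\,k}(e) = e \cup (\text{edges of } F')$ as a concatenated path. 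Iterating, the edge $e$ is periodic \emph{up to adding on $F'$-edges}; since $T$ is finite, some forward iterate $f_*^{\,Nk}(e)$ contains $e$ itself with multiplicity one and is otherwise composed of $F'$-edges. Now I would chase the definition of $F'$: $F'$ is the smallest invariant subforest containing the periodic Julia edges. The edge $e \notin F'$, yet $f_*^{\,Nk}(e) \subseteq e \cup F'$. I claim this forces $e$ to be a periodic Julia edge of $T$ itself (hence in $F \subseteq F'$, contradiction): the key point is that if $e$ maps over itself exactly once plus some $F'$-edges under an iterate, then on the edge $e$ alone the return map is a homeomorphism $e \to e$ after collapsing $F'$, and since collapsing $F'$ does not collapse $e$, the original $f_*^{\,Nk}$ restricted to a sub-segment of $e$ is a homeomorphism onto $e$ — but by finiteness and the fact that $e$ is a Julia edge (so no critical point in its interior causing folding, as critical points sit over Fatou vertices), $f_*$ is locally injective along $e$, so $e$ is genuinely periodic. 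This contradicts $e \notin F$.

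The main obstacle I anticipate is the second half: carefully converting "$\bar e$ is periodic in $T/F'$" into "$e$ is periodic in $T$" without circularity. The subtlety is that in $T/F'$ a Julia edge could in principle be periodic while in $T$ it is merely pre-periodic, with the intervening $F'$-edges absorbing the slack — I need to rule this out by exploiting that $F'$ consists of edges that \emph{eventually} map into periodic Julia edges, so an edge outside $F'$ can never have its entire forward orbit (in $T$, not the quotient) contained in $e \cup F'$ unless $e$ is itself periodic. Formalizing this likely requires a minimality argument on $F'$: define $F'' = F' \cup (\text{forward orbit of } e)$; if $e$ is not periodic in $T$ then $F''$ is a strictly larger invariant subforest, but if $\bar e$ is periodic in $T/F'$ one shows $F''$ is still collapsible and contains all periodic Julia edges, and then the contradiction is simply that $e$ becomes a Julia edge whose orbit is eventually periodic, forcing $e \in F'$ by minimality. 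I would also need the small lemma that a periodic Julia edge of $T$ remains a periodic Julia edge after collapsing an invariant collapsible subforest disjoint from it, which follows from the compatibility $\lambda_f(T)/F' = \lambda_f(T/F')$ recorded in the preliminaries together with the observation that Julia/Fatou labels descend to the quotient.
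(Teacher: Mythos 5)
Your collapsibility argument is essentially the paper's (a periodic chain of $F$-components each containing two or more marked points would produce a Levy cycle, contradicting unobstructedness, and $F'$ is obtained from $F$ by finitely many lifts, each preserving collapsibility), though the side remark that ``Julia vertices are never in the post-critical set'' is false in general --- for $z^2+i$ every post-critical point is a Julia vertex --- and the Levy-cycle contradiction you give afterwards is the actual reason.

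For the expanding condition you correctly identify the hard step --- passing from ``$\bar e$ is periodic in $T/F'$'' to ``$e$ is periodic in $T$'' --- but the minimality fix you propose does not close it. Forming $F'' = F' \cup (\text{orbit of } e)$ and checking that $F''$ is invariant, collapsible, and contains every periodic Julia edge only yields $F' \subseteq F''$ (which we already knew), not $e \in F'$: membership in $F'$ means that the forward $f_*$-orbit of $e$ eventually lands inside $F$, and the scenario you need to rule out is precisely one where the orbit of $e$ is $e$ padded by $F'$-edges and never touches $F$. Minimality of $F'$ alone cannot distinguish these. You also invoke ``critical points sit over Fatou vertices'' to get local injectivity of $f_*$ along $e$; that is false in general (again $z^2+i$: the critical point is a \emph{preperiodic} Julia point), and since you have not yet shown $e$ is periodic in~$T$, you cannot conclude (as one can for genuine periodic edges) that $e$ is critical-point free. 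The paper's proof instead extracts a structural fact you have not used: because $F$ consists of periodic Julia edges, $F$ contains no critical points, hence $f$ is one-to-one in a neighborhood of $F$ in $\R^2$. This forces (i) no edge outside $F$ incident to a vertex of $F$ can ever map into $F$, so the components of $F$ are exactly the periodic components of $F'$; and (ii) writing $f_*^k(e) = \alpha e \beta$ with $\alpha$ nontrivial, the endpoint $v$ of $e$ where $\alpha$ attaches lies in a periodic component of $F'$, hence in $F$, and the local injectivity of $f$ near $F$, iterated $k$ times, shows the initial germ of $e$ at $v$ cannot map into $F$ under $f^k$, contradicting $\alpha \subseteq F$. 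Without this local-injectivity input, the step remains genuinely open.
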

\begin{proof}After possibly modifying $f$ by an isotopy that fixes $P_f$,  we may assume that $f(T)\subseteq T$.   If $e$ is a periodic Julia edge of $T$, then clearly $e$ cannot have any critical points in its interior, and since the endpoints of $e$ are periodic Julia vertices they cannot be critical points either.  It follows that no critical points of $f$ lie in~$F$, so $f_*$ maps $F$ homeomorphically to~$F$, and is one-to-one in a neighborhood of~$F$ in~$\mathbb{R}^2$.

We claim that no connected component of $F$ can have more than one marked point.  For if $K_1$ were a component of $F$ with more than one marked point, then $K_1$ would be part of a periodic cycle $K_1,\ldots,K_m$ of connected components of~$F$, each of which would have more than one marked point.  Then the simple closed curves $c_1,\ldots,c_m$ that surround $K_1,\ldots,K_m$ are all essential and therefore form a Levy cycle for~$f$, a contradiction since $f$ is unobstructed.

We conclude that $F$ is a collapsible subforest of~$T$.  Since $F'$ can be obtained from $F$ by lifting finitely many times, it follows that $F'$ is collapsible as well.  All that remains is to prove that $T/F'$ has no periodic Julia edges.

Note first that, since $f$ is one-to-one in a neighborhood of~$F$, no edge in the complement of $F$ that is incident to a vertex of $F$ can map into~$F$, so such edges do not lie in~$F'$.  It follows that each connected component of $F$ is also a connected component of~$F'$, and indeed these are precisely the connected components of $F'$ that are periodic under~$f$.

Now suppose to the contrary that $e$ is an edge of $T$ not in $F'$ whose image $e'$ in $T/F'$ is a periodic Julia edge.  Let $f_*'$ be the dynamical map on $T/F'$, and let $k\geq 1$ so that $(f_*')^k$ maps $e'$ to itself in an orientation-preserving fashion.  Then $f_*^k$ maps $e$ to a path of the form $\alpha e \beta$ in~$T$, where each of $\alpha$ and $\beta$ is either trivial or a path in~$F'$. Note that $e$ must be a Julia edge in $T$ since Fatou vertices in $T$ map to Fatou vertices in $T/F'$.  But we know that $e$ is not a periodic Julia edge, so either $\alpha$ or $\beta$ must be nontrivial, say~$\alpha$.  Then the endpoint $v$ of $e$ at which $\alpha$ meets $e$ must lie in~$F'$.  Indeed, since $\alpha$ connects $v$ and $f^k(v)$, the component of $F'$ that contains~$v$ is periodic under~$f$, and is therefore a component of~$F$.  But $f$ is one-to-one in a neighborhood of~$F$, so the initial segment of $e$ near $v$ cannot map into $F$ under~$f^k$, a contradiction.
\end{proof}

\begin{proof}[Proof of Proposition~\ref{prop:InvariantWithinTwo}] By Lemma~\ref{lem:ContractExpandingCondition}, there exists a collapsible invariant subforest $F'$ of $T$ consisting entirely of Julia edges so that $T/F'$ satisfies the expanding condition.  Since $F'$ does not contain any of the Fatou vertices, the invariant angle assignment on the Fatou vertices of~$T$ descends to an invariant angle assignment on the Fatou vertices of $T/F'$. By Lemma~\ref{lem:InvariantAnglesJuliaVertices}, it follows that $T/F'$ satisfies the angle condition, so by Proposition~\ref{prop:PoirierConditions} the quotient $T/F'$ must be the topological Hubbard tree for~$f$.
\end{proof}

We can illustrate Proposition~\ref{prop:InvariantWithinTwo} with our running example $f(z)=z^3-3z/4+i\sqrt{7}/4$ from Section~\ref{sec:alex}. Let $T$ be the invariant tree $T$ for the polynomial $f$ shown in Figure~\ref{fig:TreesCubicRabbit}(b).  Since all of the Fatou vertices of $T$ are leaves, this tree satisfies the angle condition. However, it does not satisfy the expanding condition since the middle edge is a Julia edge that maps to itself.  The subforest $F$ consisting of this edge is an invariant subforest, and the quotient $T/F$ is the Hubbard tree for $f$ shown in Figure~\ref{fig:TreesCubicRabbit}(a).


\subsection{Proof of the first statement of Theorem~\ref{thm:main}}
\label{sec:p4}
We are now ready to prove the first statement of our main theorem.  

\begin{proof}[Proof of Theorem~\ref{thm:main}(1)]

As in the statement, let $f$ be an unobstructed post-critically finite topological polynomial and let $n = |P_f|$.

We claim that every vertex $T$ of $\T_n$ is either periodic or pre-periodic under the action of~$\lambda_f$.  In other words, for each vertex $T$ of $\T_n$, there is an $m \geq 0$ and a $r > 0$ so that
\[
\lambda_f^m(T) = \lambda_f^{m+r}(T).
\]
(Note that $m$ and $r$ depend on $T$.)  Since the action of $\lambda_f$ is simplicial (see Section~\ref{sec:lifting}) it never increases the distance between any two vertices.  In particular, since the Hubbard vertex $H_f$ is fixed under~$\lambda_f$, the ball of any finite radius around $H_f$ must map into itself.  Since $\T_n$ is locally finite, any such ball has only finitely many vertices.  Thus, using the fact that $\T_n$ is connected, we see that the orbit of any vertex $T$ of $\T_n$ under $\lambda_f$ must eventually repeat, whence the claim.

Now, given any vertex $T$ of $\T_n$, let $m$ and $r$ be as above.  Then $T'=\lambda_f^m(T)$ is fixed under~$\lambda_f^r$, so $T'$ is an invariant tree for $f^r$.  By Proposition~\ref{prop:ExpandAngleCondition}, there is a vertex $T''$ that has distance at most 1 from $T'$ and satisfies the angle condition for $f^r$.  Then by Proposition~\ref{prop:InvariantWithinTwo} the Hubbard vertex for $f^r$ has distance at most 1 from $T''$.  Since the Hubbard vertices for $f$ and $f^r$ are the same, the result follows. \end{proof}


\section{Finding Hubbard vertices for obstructed maps}
\label{sec:obs}

In this section we prove the second statement of Theorem~\ref{thm:main}, which says that if $f$ is an obstructed  post-critically finite topological polynomial with $|P_f|=n$ and $T$ is any vertex of~$\T_n$, then for all sufficiently large $k$ the vertex $\lambda_f^k(T)$ has distance at most 1 from the Levy set $L_f \subseteq \T_n$.

We begin in Section~\ref{sec:pilgrimselinger} by giving Pilgrim's original definition of the canonical obstruction (for an arbitrary post-critically finite branched self-cover of the sphere) and then describing Selinger's topological characterization of it.  In Section~\ref{sec:canonicalstructure} we prove some basic structural properties of the canonical obstruction, in the case of a topological polynomial; the main result of this section is Proposition~\ref{prop:CanonicalObstructionStructure}.  Using this proposition, we define in Section~\ref{sec:hubbardbubble} the Hubbard vertex for an obstructed topological polynomial, which lies in $\hat \T_n$.  We also give a version of the Poirier conditions for Hubbard vertices.  In Section~\ref{sec:canonical form} we describe and prove the existence of our normal form for topological polynomials, which is used in Section~\ref{sec:rabbit}.  Finally, in Section~\ref{sec:pfobs} we give the proof of the second statement of Theorem~\ref{thm:main}.  In the proof the Hubbard vertex plays the same role for obstructed maps that the analogous Hubbard vertex does for unobstructed maps.   At the end of Section~\ref{sec:pfobs} we explain two refinements of Theorem~\ref{thm:main} that are afforded by the proof.


\subsection{Canonical obstructions}
\label{sec:pilgrimselinger}

We briefly discuss here Thurston's theory of Thurston obstructions, Pilgrim's theory of canonical obstructions, and Selinger's characterization of the canonical obstruction.

\p{Thurston obstructions} Let $f : S^2 \to S^2$ be a post-critically finite branched covering map.  Given a multicurve $M$ in $(\R^2, P_f$), the \textit{lift} $\lambda_f(M)$ of $M$ is the multicurve obtained from~$f^{-1}(M)$ by deleting all inessential curves and deleting all but one curve from each isotopy class (this multicurve is well defined up to isotopy).  A multicurve $M$ is \textit{stable} under $f$ if every curve of $\lambda_f(M)$ is isotopic to a curve of~$M$, and is \textit{invariant} under $f$ if $\lambda_f(M)$ is isotopic to~$M$ (so any invariant multicurve is stable). Note that any invariant multicurve is stable, as is any multicurve whose lift is empty.

Thurston proved that $f$ fails to be Thurston equivalent to a rational map if and only if it has a stable multicurve that satisfies a certain contracting condition~\cite{DH,thurston1988}.  Such a multicurve is known as a \textit{Thurston obstruction} for~$f$.  In the case where $f$ is a post-critically finite topological polynomial, a Thurston obstruction for $f$ is precisely a stable multicurve in $\R^2\setminus P_f$ that contains a Levy cycle~\cite[Theorem 5.5]{BFH}.

\p{Canonical obstructions} The theory of canonical obstructions was initiated by Pilgrim \cite{pilgrim}.  In his work, the \textit{canonical obstruction} $\Gamma_f$ of post-critically finite branched cover $f\colon S^2\to S^2$ is defined to be the multicurve consisting of one representative from each isotopy class of essential simple closed curves in $S^2\setminus P_f$ whose geodesic length tends to $0$ in the sequence of hyperbolic metrics on $S^2 \setminus P_f$ obtained under iteration of Thurston's pullback map on Teichm\"uller space. This collection is a Thurston obstruction for~$f$, and is uniquely determined up to isotopy.

\p{Selinger's characterization} Selinger~\cite{selinger13} gave a purely topological description of the canonical obstruction.  For convenience, we state it only for the case of an obstructed topological polynomial~$f$.

If $M$ is a multicurve in $(\R^2,P_f)$, the \textit{complementary components} of $M$ are the connected components of $\R^2\setminus(M\cup P_f)$.  Each such component is homeomorphic to a sphere with punctures; these correspond precisely to the spheres of the noded surface obtained from $(\R^2,P_f)$ by contracting the curves of $M$ to points.  A complementary component $C$ is said to be \textit{periodic} if there exists a $k\geq 1$ and a finite set $Q\subseteq C$ so that $C\setminus Q$ is isotopic to a component $\widetilde{C}$ of $f^{-k}(C)$ in~$\R^2\setminus P_f$ (the elements of $Q$ correspond to points of $C$ that map to $P_f$ and to closed disks in $C$ that map to other complementary components).  The smallest such $k$ is the \textit{period} of~$C$.

Let $C$ be a periodic complementary component of a multicurve $M$ with period~$k$, with $\widetilde{C}$ the corresponding component of $f^{-k}(C)$, and with $Q$ the required finite set in $C$ as above.  We may restrict $f^k$ to obtain a covering map $\widetilde{C} \to C \setminus Q$.  After identifying $\widetilde{C}$ with $C \setminus Q$ via an ambient isotopy of $(\R^2,P_f)$, we may consider the map $\widetilde{C} \to C$ as a covering map $C \setminus Q \to C$.  By filling in all punctures in both $C \setminus Q$ and $C$ we obtain a map $f_C : S^2 \to S^2$ called the \textit{first return map} for~$C$.

Selinger's characterization of canonical obstructions for post-critically finite branched covers of $S^2$ reduces to the following statement for topological polynomials: a Thurston obstruction $M$ for a post-critically finite topological polynomial $f$ is the canonical obstruction $\Gamma_f$ if and only if it satisfies the following conditions:
\begin{enumerate}
    \item The first return map for each periodic complementary component of $M$ is either a homeomorphism or an unobstructed topological polynomial.\smallskip
    \item No Thurston obstruction $M'$ for $f$ that is properly contained in $M$ satisfies condition~(1).
\end{enumerate}
Note that the exterior complementary component $E$ of a stable multicurve $M$ is periodic under (and in fact preserved by) $f$.  If $M$ is un-nested, then $E$ can be viewed as a sphere with one puncture at $\infty$ and one puncture for each curve of~$M$, and the corresponding first return map $f_E$ is a topological polynomial.  Observe that $f_E$ maps punctures to punctures.  Specifically, $f_E$ maps the puncture corresponding to a curve $c$ in $M$ to the puncture corresponding to a curve $c'$ in $M$ if and only if $f^{-1}(c')$ has a component isotopic to~$c$.


\subsection{Structure of the canonical obstruction}
\label{sec:canonicalstructure}

In this section we use Selinger's characterization of the canonical obstruction in order to give a description of the canonical obstruction in terms of Levy cycles, Proposition~\ref{prop:CanonicalObstructionStructure} below.  

In order to state Proposition~\ref{prop:CanonicalObstructionStructure}, we require several definitions.  If $f$ is an obstructed topological polynomial, then we can view any Levy cycle $(c_1,\ldots,c_k)$ for $f$ as a multicurve in $(\R^2,P_f)$.  The curves of a Levy cycle all surround the same number of marked points, and therefore a Levy cycle is an un-nested multicurve.  More generally, a \textit{Levy multicycle} for $f$ is any multicurve in $(\R^2,P_f)$ that can be expressed as a union of Levy cycles. 

If $L$ is a Levy multicycle for~$f$, the \textit{invariant closure} of $L$ is the multicurve obtained from the set of iterated preimages of curves in~$L$, where an \textit{iterated preimage} of a curve $c$ is any essential component of $f^{-k}(c)$ for any $k\geq 1$; so that the result satisfies the definition of a multicurve, we take only one curve from each isotopy class.  Note that the invariant closure of $L$ contains~$L$, since each curve of $L$ is an iterated preimage of itself.

Central to our analysis going forward will be the Levy multicycles that are un-nested multicurves.  We place a partial order on the set of un-nested Levy multicycles for an obstructed topological polynomial $f$ as follows.  Given un-nested Levy multicycles $L$ and $L'$ for~$f$, we write $L\preceq L'$ if for each curve $c$ of $L$ there exists a curve $c'$ of $L'$ such that $c$ is isotopic to a curve that lies in the interior of~$c'$. We say that an un-nested Levy multicycle $L$ for $f$ is \textit{outermost} if it is maximal with respect to~$\preceq$.

\begin{proposition}\label{prop:CanonicalObstructionStructure}
Let $f$ be an obstructed post-critically finite topological polynomial. 
\begin{enumerate}
    \item The map $f$ has a unique outermost un-nested Levy multicycle~$L$.
    \item The invariant closure of $L$ is the canonical obstruction for~$f$. 
    \item The canonical obstruction for $f$ is an un-nested multicurve.
\end{enumerate}
\end{proposition}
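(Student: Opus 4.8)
The plan is to deduce all three statements from Selinger's topological characterization of the canonical obstruction (Section~\ref{sec:pilgrimselinger}) together with Pilgrim's description of $\Gamma_f$ as the invariant closure of a union of Levy cycles. The bulk of the argument is a ``laminarity'' lemma about how the curves of Levy cycles and their iterated preimages nest; given it, parts~(1)--(3) follow by bookkeeping.

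The laminarity lemma is the following. Let $\mathcal{L}$ be the set of isotopy classes of curves consisting of all curves that appear in a Levy cycle of $f$ together with all their essential iterated preimages. Then $\mathcal{L}$ can be realized by a family of pairwise disjoint simple closed curves that, measured only against the marked set $P_f$, are pairwise un-nested; writing $c\preceq c'$ when $c$ is isotopic into the disk bounded by $c'$, this says that $\preceq$ is a partial order on $\mathcal{L}$ and no two curves of $\mathcal{L}$ cross. Disjointness is automatic: since every essential curve is disjoint from $P_f$ and $P_f$ is forward invariant, one checks by induction that $f^{-k}$ of a disjoint union of essential curves is again a disjoint union of (essential or inessential, but disjoint) simple closed curves. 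The content is un-nestedness, which is genuinely delicate: a preimage of a topological disk under a branched cover need not be a union of disks, and two iterated-preimage curves enclosing different subsets of $P_f$ can a priori be nested. I would rule this out using Selinger's minimality condition: a strictly nested pair of curves inside $\Gamma_f$ would allow an orbit of curves to be deleted while retaining a Thurston obstruction satisfying Selinger's condition~(1), contradicting the minimality asserted in Selinger's condition~(2).

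Granting the lemma, part~(1) is immediate: among the curves of $\mathcal{L}$ that belong to Levy cycles, the $\preceq$-maximal ones are pairwise disjoint and pairwise non-isotopic, hence form an un-nested multicurve $L$, which is the union of the Levy cycles through its curves --- an un-nested Levy multicycle --- and any un-nested Levy multicycle $L'$ satisfies $L'\preceq L$ since each of its curves is $\preceq$ a maximal one. For parts~(2) and~(3), set $M$ equal to the invariant closure of $L$; by the lemma $M$ is a multicurve whose curves pairwise do not cross and are pairwise un-nested, which is part~(3) once we know $M=\Gamma_f$. To identify $M$ with $\Gamma_f$ I would verify Selinger's two conditions. $M$ is a Thurston obstruction: it is stable because every essential curve of $f^{-1}(M)$ is an iterated preimage of a curve of $L$, hence in $M$, and it contains the Levy cycles of $L$. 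Selinger's condition~(1) holds because a Levy cycle of any first-return map of $M$ transports back to a Levy cycle of $f$ not accounted for among the $\preceq$-maximal curves $L$, contradicting that $L$ contains a representative of every Levy cycle of $f$. Selinger's condition~(2) holds because a proper Thurston sub-obstruction $M'\subsetneq M$ satisfying~(1) must, by the un-nestedness of $L$, omit a curve of $L$ that is visible from the exterior component $E'$ of $M'$, so the Levy cycle through it descends to a Levy cycle of the unobstructed first-return map $f_{E'}$, which is impossible; hence $L\subseteq M'$, and stability of $M'$ forces $M\subseteq M'$. Therefore $M=\Gamma_f$, which with the lemma gives parts~(2) and~(3).

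The step I expect to be the main obstacle is the un-nestedness half of the laminarity lemma --- ruling out strictly nested iterated-preimage curves that planar topology alone permits, which is where Selinger's minimality must be invoked and where the argument is most delicate. A secondary point requiring care is the bookkeeping translating Levy cycles of the first-return maps of a multicurve into Levy cycles of $f$, used throughout the verification of Selinger's two conditions.
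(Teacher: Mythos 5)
There is a genuine gap, and it sits exactly where you locate the heart of your argument: the ``laminarity lemma'' is false as stated. You take $\mathcal{L}$ to be \emph{all} curves appearing in \emph{any} Levy cycle of $f$ (plus iterated preimages) and claim their pairwise disjointness is automatic. But your induction only shows that the preimages of a single disjoint family stay disjoint; it says nothing about curves coming from two \emph{different} Levy cycles, which can intersect essentially. Indeed, an obstructed topological polynomial may have infinitely many distinct Levy cycles, and a multicurve in $(\R^2,P_f)$ has at most $|P_f|-2$ components, so for such an $f$ the full collection $\mathcal{L}$ cannot possibly be realized disjointly. Your proof of the un-nestedness half is also circular: you invoke Selinger's minimality condition for $\Gamma_f$ to rule out nesting among curves of $\mathcal{L}$, but at that stage you do not yet know that the curves of $\mathcal{L}$ lie in $\Gamma_f$ --- identifying $\Gamma_f$ is the conclusion you are working toward. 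Since parts (1)--(3) in your write-up all flow from this lemma (the existence of the maximal multicurve $L$, the claim that every un-nested Levy multicycle is $\preceq L$, and the un-nestedness of the invariant closure), the argument does not go through.

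The paper sidesteps this by never comparing arbitrary Levy cycles. It restricts attention to \emph{un-nested Levy multicycles}, which are multicurves by definition, puts the partial order $\preceq$ on those, and gets a maximal element from the absence of infinite ascending chains (each step up either encloses more marked points or has fewer curves). Un-nestedness of the invariant closure comes from the elementary fact that the lift of an un-nested multicurve is un-nested (Proposition~\ref{prop:LevyCycleStructure}), not from Selinger's characterization; and the only nesting trichotomy needed (Lemma~\ref{lem:DisjointLevyCycles}) is proved for a pair of Levy cycles whose union is \emph{already} a multicurve. Your verification of Selinger's two conditions is in the right spirit and close to Proposition~\ref{prop:ExteriorComponentCheck}, but note two further points of care: you must also check that the first return maps of the \emph{bounded} periodic complementary components are homeomorphisms (this uses Proposition~\ref{prop:LevyCycleProperties}(2) to identify those components with the interiors of curves of $L$), and when a Levy cycle $L_E$ appears in the exterior you cannot simply say it ``should have been among the maximal curves'' --- you must assemble $L_E$ with the Levy cycles of $L$ not nested inside it into a strictly $\preceq$-larger un-nested Levy multicycle, which is where Lemma~\ref{lem:DisjointLevyCycles} is actually used.
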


In order to prove Proposition~\ref{prop:CanonicalObstructionStructure} we first prove three preliminary results.

\begin{proposition}\label{prop:LevyCycleStructure}
Let $L$ be an un-nested Levy multicycle for an obstructed topological polynomial~$f$.  Then the invariant closure of $L$ is an invariant, un-nested Thurston obstruction for~$f$.
\end{proposition}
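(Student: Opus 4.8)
The plan is to verify, in order, that the invariant closure $\overline{L}$ of $L$ is (a) invariant, (b) un-nested and hence a finite multicurve, and then (c) a Thurston obstruction; the last of these is immediate and most of the work lies in (a) and (b). For (c), recall that for a topological polynomial a Thurston obstruction is precisely a stable multicurve containing a Levy cycle \cite[Theorem 5.5]{BFH}. An invariant multicurve is stable, and $L \subseteq \overline{L}$ contains a Levy cycle, so (a) and (b) yield (c) at once. I also record at the outset the one geometric fact about polynomials that drives everything: since $f^{-1}(\infty) = \{\infty\}$, a short Euler-characteristic count shows that for any simple closed curve $\gamma$ in $\R^2 \setminus P_f$, with $D_\gamma$ the complementary disk not containing $\infty$, the preimage $f^{-1}(D_\gamma)$ is a disjoint union of disks, none containing $\infty$. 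In particular $\lambda_f$ sends un-nested multicurves to un-nested multicurves, and the ``interior'' convention (non-$\infty$ side) is preserved under passing to preimages.

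For invariance, the inclusion $\lambda_f(\overline{L}) \subseteq \overline{L}$ is immediate from the definition: an essential component of $f^{-1}$ of an iterated preimage of a curve $c_0 \in L$ is again an iterated preimage of $c_0$. For $\overline{L} \subseteq \lambda_f(\overline{L})$, take a curve $c$ of $\overline{L}$. If $c \in L$, then $c$ lies in some Levy cycle $(c_0,\dots,c_{m-1})$; writing $c = c_j$, the Levy cycle condition says $c_j$ is isotopic to an essential component of $f^{-1}(c_{j+1})$ with $c_{j+1} \in L \subseteq \overline{L}$, so $c \in \lambda_f(\overline{L})$. Otherwise $c$ is an essential component of $f^{-k}(c_0)$ for some $c_0 \in L$ and $k \geq 1$; since $f(c)$ is connected it lies in a single component $c'$ of $f^{-(k-1)}(c_0)$, and $c$ is then a component of $f^{-1}(c')$. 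It remains to check that $c'$ is essential, for then $c' \in \overline{L}$ (or $c' = c_0 \in L$ when $k = 1$) and $c \in \lambda_f(\overline{L})$. If $c'$ were inessential it would be isotopic, rel $P_f$, to an arbitrarily small circle around a single marked point (or around no marked points); lifting this isotopy carries $c$ to a component of the preimage of that small circle, which by the displayed fact bounds a small disk containing at most one marked point, contradicting that $c$ is essential. Combining the two inclusions, $\lambda_f(\overline{L}) = \overline{L}$ up to isotopy.

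For un-nestedness and finiteness, I would argue by induction on $k$ that the essential curves among $L \cup f^{-1}(L) \cup \cdots \cup f^{-k}(L)$ form an un-nested multicurve. The base case is the hypothesis that $L$ is un-nested. For the inductive step one passes from the disks bounded by the level-$\leq k$ curves to their $f$-preimages, using that preimages under a polynomial of disjoint (resp. nested) disks are disjoint (resp. nested-or-disjoint) disks, together with the fact that the marked points inside a preimage curve map under $f$ into the marked points inside its image curve; this keeps the whole collection laminar, i.e. pairwise disjointly realizable and un-nested. Finally, an un-nested multicurve in $(\R^2, P_f)$ has each of its curves surrounding a subset of $P_f$ of size at least $2$, and these subsets are pairwise disjoint, so it has at most $|P_f|/2$ curves; hence the increasing chain of finite-level closures stabilizes after at most $|P_f|/2$ steps, and $\overline{L}$ is a finite, un-nested multicurve.

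The step I expect to be the main obstacle is the inductive bookkeeping in the previous paragraph: as drawn, a component of $f^{-1}(D_\gamma)$ can straddle the boundaries of the disks bounded by curves of $L$, so one must work at the level of isotopy classes and argue that the geometric intersection numbers among \emph{all} members of $\overline{L}$ vanish, not merely among members of a single level. The invariance established above is a useful constraint here, since once $\lambda_f(\overline{L}) = \overline{L}$ the collection is reproduced by pullback; but turning this into a clean proof that $\overline{L}$ is simultaneously realizable requires care with how preimages of nested disks interleave, and this is where most of the technical effort goes.
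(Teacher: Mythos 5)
Your proposal is correct and follows essentially the same route as the paper: the Levy condition gives $L\subseteq\lambda_f(L)$ up to isotopy, the increasing chain of iterated lifts stabilizes because a multicurve in $(\R^2,P_f)$ has boundedly many components, and invariance, un-nestedness, and the Bielefeld--Fisher--Hubbard criterion then finish the argument. The one step you flag as the main obstacle --- simultaneous realizability of curves across different levels --- in fact dissolves via an observation you already make: since $L\subseteq\lambda_f(L)$ up to isotopy, the level-$\leq k$ closure \emph{is} $\lambda_f^k(L)$ up to isotopy, i.e.\ a single lift of an un-nested multicurve, hence automatically a pairwise disjoint, un-nested multicurve; no interleaving of nested disks ever arises. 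This is exactly how the paper organizes the proof, setting $M_n=\lambda_f(M_{n-1})$, noting $M_{n-1}\subseteq M_n$, and concluding that the invariant closure equals $M_N$ for some finite~$N$.
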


\begin{proof}Let $M_0=L$, and for each $n\geq 1$ let $M_n=\lambda_f(M_{n-1})$ be the the lift of $M_{n-1}$ under~$f$.  Since $L$ is an un-nested Levy multicycle, each curve of $M_0$ is isotopic to a curve of~$M_1$, so without loss of generality we may assume that $M_0\subseteq M_1$, and indeed that $M_{n-1}\subseteq M_n$ for all $n\geq 1$. As such, the union $M=\bigcup_{n\geq 0} M_n$ is precisely the invariant closure of~$L$.  But a multicurve in $(\R^2,P_f)$ consists of at most $|P_f|-2$ curves, so the sequence \[M_0\subseteq M_1\subseteq M_2\subseteq \cdots\] eventually stabilizes and therefore $M = M_N$ for some sufficiently large~$N$.

Now, since $M_0$ is un-nested and the lift of an un-nested multicurve is un-nested, each of the multicurves $M_n$ is un-nested, and hence $M$ is as well. Note also that $M$ is invariant and hence stable.  Since $M$ contains each of the Levy cycles of~$L$, it is a Thurston obstruction for~$f$.
\end{proof}

We will need some basic information about the structure of un-nested Levy multicycles.

\begin{proposition}\label{prop:LevyCycleProperties}
Let $L$ be an un-nested Levy multicycle for an obstructed topological polynomial~$f$.  Then:
\begin{enumerate}
    \item $L$ can be expressed uniquely as a disjoint union of Levy cycles.\smallskip
    \item If $c$ is a curve of the invariant closure of~$L$, then $c$ is isotopic to a curve of $L$ if and only if some iterated preimage of $c$ is isotopic to~$c$.
\end{enumerate}
\end{proposition}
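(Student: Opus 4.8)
The plan is to prove both statements simultaneously by attaching to $L$ a single combinatorial gadget, namely a self-map of the set of curves of the invariant closure $M$ of $L$, whose dynamics encode everything we need.

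\emph{Part (1).} For curves $c,c'$ of $L$ write $c\to c'$ if $c$ is isotopic to a component of $f^{-1}(c')$ on which $f$ restricts to a degree-$1$ map. Since $L$ is a union of Levy cycles and each Levy cycle $(c_0,\dots,c_{k-1})$ satisfies $c_{i-1}\to c_i$, every curve of $L$ has at least one successor in $L$; the crux is that it has exactly one. Suppose $c\to c'$ and $c\to c''$ with $c'\neq c''$, and choose preimage-component representatives $\tilde c_1\simeq c$ of $f^{-1}(c')$ and $\tilde c_2\simeq c$ of $f^{-1}(c'')$. These are distinct, hence disjoint, components of $f^{-1}(L)$, so the two bounded disks they cut off are disjoint or nested; since $\tilde c_1\simeq\tilde c_2$ in $\R^2\setminus P_f$ and any annulus cobounding two disjoint curves with disjoint disks would contain $\infty$, the disks are in fact nested, say $D_{\tilde c_2}\subseteq D_{\tilde c_1}$. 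Because $f$ is a topological polynomial (so $\infty$ has $\infty$ as its only preimage), the preimage of any disk not containing $\infty$ is a disjoint union of disks, and a degree-$1$ branched cover of a disk by a disk is a homeomorphism; thus $f$ carries $D_{\tilde c_i}$ homeomorphically onto $D_{c^{(i)}}$, and $D_{\tilde c_2}\subseteq D_{\tilde c_1}$ forces $D_{c''}\subseteq D_{c'}$, contradicting un-nestedness of $L$. Hence $\to$ is a well-defined function $\operatorname{succ}\colon L\to L$. In any way of writing $L$ as a disjoint union of Levy cycles, $\operatorname{succ}$ restricts to the cyclic shift on each Levy cycle; therefore $\operatorname{succ}$ is a permutation of the curves of $L$, the Levy cycles of the decomposition are exactly the cycles of $\operatorname{succ}$, and the decomposition is unique because a permutation has a unique cycle decomposition.

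\emph{Part (2), setup.} By Proposition~\ref{prop:LevyCycleStructure}, $M$ is an invariant, un-nested Thurston obstruction. The same nesting argument as above — now using only un-nestedness, without any degree-$1$ hypothesis — shows that every curve $c$ of $M$ is isotopic to an essential component of $f^{-1}(c')$ for a \emph{unique} curve $\nu(c)$ of $M$: existence because $\lambda_f(M)=M$, uniqueness because two competing choices of $c'$ would bound nested disks inside $M$. I then record three consequences. First, $M$ is closed under taking essential preimages (an essential preimage of an iterated preimage of a curve of $L$ is again one), whence $\lambda_f(\{c\})=\nu^{-1}(\{c\})$ for $c\in M$ and so $\lambda_f^{k}(\{c\})=\nu^{-k}(\{c\})$. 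Second, setting $M_n:=\lambda_f^{n}(L)$ one gets $M_n=\nu^{-n}(L)$, and since the $M_n$ increase to $M=M_N$ for some $N$ (as in the proof of Proposition~\ref{prop:LevyCycleStructure}), we conclude $\nu^{N}(M)\subseteq L$. Third, on $L$ the map $\nu$ agrees with $\operatorname{succ}$ from part (1) (a degree-$1$ preimage component is essential), so $\nu(L)=L$ and $\nu|_L$ is a permutation.

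\emph{Part (2), conclusion, and main obstacle.} For $c\in M$, the condition ``some iterated preimage of $c$ is isotopic to $c$'' says exactly that $c\in\lambda_f^{k}(\{c\})$ for some $k\ge1$, which by the first consequence means $\nu^{k}(c)=c$, i.e.\ $c$ is $\nu$-periodic. If $c$ is $\nu$-periodic, its $\nu$-orbit is a single cycle; by the second consequence this cycle meets $L$, and by the third consequence any $\nu$-cycle meeting $L$ lies entirely in $L$, so $c$ is a curve of $L$. Conversely, a curve of $L$ run once around its Levy cycle is an essential component of some $f^{-k}(c)$, hence an iterated preimage of itself. This completes the proof. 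The one genuinely load-bearing lemma is the uniqueness of successors (equivalently, the well-definedness of $\nu$), which rests on the two plane-topology facts that the preimage under a topological polynomial of a disk missing $\infty$ is a union of disks and that disjoint isotopic simple closed curves bound nested disks, together with the un-nestedness provided by Proposition~\ref{prop:LevyCycleStructure}. A secondary point requiring care is that ``iterated preimage'' is defined via components of $f^{-k}$ with only the outermost curve required essential, so the equivalence with $\nu$-periodicity should be phrased through $\lambda_f^{k}(\{c\})$ rather than by chasing a chain of possibly inessential intermediate curves.
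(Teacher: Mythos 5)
Your proof is correct and follows essentially the same route as the paper's: the paper's entire argument is the observation that un-nestedness of the invariant closure $M$ forces the ``successor'' relation (an edge from $c$ to $c'$ when $f^{-1}(c')$ has a component isotopic to $c$) to be a function, so the resulting directed graph is a union of cycles --- the Levy cycles of $L$ --- with trees feeding into them. Your write-up supplies the details the paper leaves implicit (the plane-topology nesting argument for uniqueness of the successor, and the identification of the $\nu$-periodic part with $L$ via $\nu^{N}(M)\subseteq L$ and $\nu|_L$ being a permutation), and even the one point you flag as delicate --- that ``some iterated preimage of $c$ is isotopic to $c$'' may in principle involve inessential intermediate curves --- is passed over silently in the paper's proof as well.
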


\begin{proof}Let $M$ be the invariant closure of $L$, and consider the directed graph with one vertex for each component of~$M$, with a directed edge from $c$ to $c'$ if $f^{-1}(c')$ has a component isotopic to~$c$.  By Proposition~\ref{prop:LevyCycleStructure}, $M$ is un-nested.  Therefore, no two edges in the graph have the same initial vertex.  It follows that the graph consists of finitely many directed cycles (the Levy cycles of~$L$) together with finitely many directed trees that feed into the cycles.  Statements (1) and (2) follow immediately.
\end{proof}

\begin{proposition}\label{prop:ExteriorComponentCheck}
Let $f$ be an obstructed topological polynomial, let $L$ be an un-nested Levy multicycle for~$f$, and let $M$ be the invariant closure of~$L$.  Then $M$ is the canonical obstruction for $f$ if and only if the first return map for the exterior complementary component of~$M$ is unobstructed.
\end{proposition}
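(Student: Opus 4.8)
The plan is to verify Selinger's topological characterization of the canonical obstruction directly for the multicurve $M$, which is already an invariant, un-nested Thurston obstruction by Proposition~\ref{prop:LevyCycleStructure}. Recall that $M=\Gamma_f$ exactly when (1) the first return map of every periodic complementary component of $M$ is either a homeomorphism or an unobstructed topological polynomial, and (2) no Thurston obstruction properly contained in $M$ satisfies~(1). The forward implication is immediate: if $M=\Gamma_f$, then, $M$ being un-nested, its exterior complementary component $E$ is preserved by $f$ and $f_E$ is a topological polynomial; since a topological polynomial has degree greater than~$1$ it is not a homeomorphism, so condition~(1) forces $f_E$ to be unobstructed.

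For the converse, suppose $f_E$ is unobstructed; I would verify conditions~(1) and~(2). The periodic complementary components of $M$ are $E$, whose first return map is unobstructed by hypothesis, and the bounded components, each of the form $D_c\smallsetminus P_f$ with $D_c$ the open disk bounded by a curve $c$ of $M$ (there are no curves of $M$ inside $c$ since $M$ is un-nested). By Proposition~\ref{prop:LevyCycleProperties}(2) such a component is periodic only when $c$ is isotopic to a curve of~$L$, that is, $c$ lies in a Levy cycle; and in that case I claim the first return map of $D_c\smallsetminus P_f$ is a homeomorphism, so condition~(1) holds. The point is that in the preimage graph of Proposition~\ref{prop:LevyCycleProperties} every vertex has out-degree at most~$1$, and moreover a single preimage $f^{-1}(c')$ contains at most one component in any given isotopy class; both facts follow because a bounded annulus or disk cannot map onto a region containing $\infty$, as $f^{-1}(\infty)=\{\infty\}$. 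Consequently the period of $D_c\smallsetminus P_f$ equals the length $\ell$ of the Levy cycle of~$c$, the unique component of $f^{-\ell}(c)$ isotopic to $c$ is precisely the one obtained by composing the degree-one preimages supplied by the definition of a Levy cycle, and this curve is the only end of the corresponding preimage region lying over the puncture of $D_c\smallsetminus P_f$ determined by~$c$; hence that first return map has degree~$1$ and is a homeomorphism.

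For condition~(2), let $M'\subsetneq M$ be a Thurston obstruction. Since $M'$ is stable it is closed under passing to essential iterated preimages, so $M'\cap L$ is a union of some of the Levy cycles whose disjoint union is $L$ (Proposition~\ref{prop:LevyCycleProperties}(1)); were it all of $L$, then $M'$ would contain the invariant closure of $L$, namely $M$, contradicting $M'\subsetneq M$. So some Levy cycle $Z$ of $L$ is disjoint from~$M'$. As $M$ is un-nested and $Z\cup M'\subseteq M$, no curve of $M'$ is nested with a curve of $Z$, so every curve of $Z$ lies in the exterior complementary component $E'$ of $M'$ and remains essential there; together with its degree-one preimages, $Z$ is a Levy cycle for the first return map $f_{E'}$. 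Thus $f_{E'}$ is an obstructed topological polynomial, so the periodic component $E'$ violates condition~(1) for~$M'$. This establishes~(2), and therefore $M=\Gamma_f$ by Selinger's characterization.

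I expect the degree computation in~(1) to be the main obstacle: one must check that the preimage region attached to a bounded periodic complementary component is a disk whose only end over the distinguished puncture is a single curve, so that total ramification of $f$ at infinity forces the first return map to be a homeomorphism rather than merely a topological polynomial with a fixed totally ramified puncture. The bookkeeping in~(2)---identifying isotopy classes of $f$-preimages with isotopy classes of $f_{E'}$-preimages inside the exterior sphere---is routine but needs care.
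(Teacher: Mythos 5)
Your proposal is correct and follows essentially the same route as the paper's proof: verify Selinger's two conditions directly, using Proposition~\ref{prop:LevyCycleProperties}(2) to see that the bounded periodic complementary components are exactly the interiors of the curves of $L$ (with degree-one first return maps by the Levy cycle condition), and for condition~(2) locate a Levy cycle of $L$ entirely missing from any proper stable submulticurve $M'$, which then obstructs the exterior first return map of $M'$. The extra bookkeeping you supply for the degree computation and for identifying preimages inside the exterior sphere is sound but not needed beyond what the paper records.
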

\begin{proof}
The forward direction follows immediately from Selinger's characterization.  For the converse, suppose that the first return map for the exterior complementary component is unobstructed.  We know from Proposition~\ref{prop:LevyCycleProperties}(2) that the only bounded periodic complementary components of $M$ are the the interiors of the curves of~$L$, and since $L$ is a union of Levy cycles the first return map for each such component is a homeomorphism (if the degree of the first return map were greater than 1, then the restriction of $f$ to some curve of $L$ would not have degree 1, and so $L$ would not be a Levy multicycle). Thus $M$ satisfies condition (1) of Selinger's characterization.

For condition (2) of Selinger's characterization, let $M'$ be any $f$-stable multicurve that is properly contained in~$M$.  Then $M'$ cannot contain all of~$L$, and indeed there must be a Levy cycle $L_0\subseteq L$ whose curves are completely absent from~$M'$.  Then $L_0$ is a Levy cycle for the first return map corresponding to the exterior complementary component of~$M'$, so $M'$ fails to satisfy condition~(1) of Selinger's characterization.  We conclude that $M$ satisfies condition~(2), so $M$ is the canonical obstruction.
\end{proof}

\begin{lemma}\label{lem:DisjointLevyCycles}
Let $L$ and $L'$ be distinct Levy cycles for an obstructed topological polynomial~$f$. If $L\cup L'$ is a multicurve, then exactly one of the following holds: $L\preceq L'$, $L'\preceq L$, or $L\cup L'$ is un-nested.  Moreover, these three cases are mutually exclusive.
\end{lemma}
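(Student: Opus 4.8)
The plan rests on the observation that every Levy cycle carries with it a distinguished $f$-invariant set of marked points. For a simple closed curve $c$ in $(\R^2,P_f)$ write $P(c)$ for the set of marked points enclosed by $c$, and for a Levy cycle $L=(c_0,\dots,c_{k-1})$ put $U_L=\bigcup_i P(c_i)$. The first step is to prove a lemma stating that $f$ carries $P(c_{i-1})$ bijectively onto $P(c_i)$ for every $i$ (indices mod $k$), so that $U_L$ is $f$-invariant. This uses the degree-$1$ clause in the definition of a Levy cycle: the component $\tilde c_{i-1}$ of $f^{-1}(c_i)$ homotopic to $c_{i-1}$ bounds a disk $\tilde D$ avoiding $\infty$; since $f$ is a topological polynomial we have $f^{-1}(\infty)=\{\infty\}$, so $f(\tilde D)$ cannot be the side of $c_i$ containing $\infty$ and is therefore the interior disk of $c_i$; a degree-$1$ branched cover of a disk by a disk is a homeomorphism, and as $f(P_f)\subseteq P_f$ this homeomorphism carries the marked points in $\tilde D$ — which, since $\tilde c_{i-1}$ is homotopic to $c_{i-1}$, are exactly the points of $P(c_{i-1})$ — bijectively onto the marked points of $P(c_i)$. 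Write $n_L$ for the common number of marked points enclosed by the curves of $L$, and define $n_{L'}$ analogously.

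Second, I would record the elementary planar fact that two disjoint, non-homotopic essential simple closed curves are un-nested precisely when the marked-point sets they enclose are disjoint, and otherwise one is strictly nested in the other with the inner one enclosing a proper subset of what the outer one encloses; in particular nesting strictly decreases the number of enclosed marked points. Since all curves of a Levy cycle enclose the same number of marked points, each of $L$ and $L'$ is individually un-nested; and because $L\cup L'$ is a multicurve — so that no curve of $L$ is homotopic to a curve of $L'$ — the condition $L\preceq L'$ is equivalent to: every $P(c_i)$ is a \emph{proper} subset of some $P(c'_j)$, which forces $n_L<n_{L'}$. Mutual exclusivity of the three alternatives then follows at once: $L\preceq L'$ together with $L'\preceq L$ would give $n_L<n_{L'}<n_L$, while $L\preceq L'$ (respectively $L'\preceq L$) exhibits a nested pair in $L\cup L'$ and so is incompatible with $L\cup L'$ being un-nested.

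For the existence statement I would argue as follows. Assume without loss of generality that $n_L\le n_{L'}$, and suppose $L\cup L'$ is not un-nested. Since $L$ and $L'$ are each un-nested, a nested pair witnessing this must consist of a curve $c_i$ of $L$ and a curve $c'_j$ of $L'$; because $n_L\le n_{L'}$, the nesting must be $c_i$ strictly inside $c'_j$, so $P(c_i)\subseteq P(c'_j)\subseteq U_{L'}$ and in particular $n_L<n_{L'}$. Running the first-step lemma around the cycle (applied to $L'$ as well, so that $f(U_{L'})=U_{L'}$): from $P(c_i)\subseteq U_{L'}$ we obtain $P(c_{i+1})=f(P(c_i))\subseteq U_{L'}$, and by induction $P(c_l)\subseteq U_{L'}$ for every $l$, hence $U_L\subseteq U_{L'}$. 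Finally, for each $l$ the nonempty set $P(c_l)$ meets the disjoint union $\bigsqcup_j P(c'_j)$, so by the elementary fact together with $n_L\le n_{L'}$ it is contained in a single $P(c'_j)$, and properly, since the curves are non-homotopic. Thus every curve of $L$ lies strictly inside some curve of $L'$, i.e.\ $L\preceq L'$; combined with mutual exclusivity this gives the lemma.

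The step I expect to require the most care is the first-step lemma — extracting from the bare degree-$1$ hypothesis both that the relevant disk maps homeomorphically and that $f$ therefore induces a bijection on the enclosed marked points, yielding the $f$-invariance of $U_L$. Once that invariance is available the remainder is bookkeeping about the laminar nesting structure of $L\cup L'$, with the hypothesis that $L\cup L'$ is a multicurve used only to keep the relevant set-containments proper.
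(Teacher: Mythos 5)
Your proof is correct, but it runs in the opposite direction from the paper's. The paper, given a curve $c$ of $L$ nested inside a curve $c'$ of $L'$, takes iterated \emph{preimages} of $c$: the Levy-cycle structure of $L'$ forces a unique essential component of $f^{-i}(c)$ to sit inside each successive disk of $L'$, producing a Levy cycle $L''$ through $c$ with $L''\preceq L'$, and then the uniqueness of the decomposition of an un-nested Levy multicycle into Levy cycles (Proposition~\ref{prop:LevyCycleProperties}(1)) identifies $L''$ with $L$. You instead push \emph{forward} the enclosed marked-point sets: once one $P(c_i)$ is trapped in the $f$-invariant set $U_{L'}$, forward-invariance traps all of $U_L$, and a counting argument on the disjoint disks of $L'$ pins each curve of $L$ properly inside a curve of $L'$. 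Your route is more self-contained (it bypasses Proposition~\ref{prop:LevyCycleProperties} and any control of preimage components) and it makes the mutual exclusivity explicit via the invariant $n_L$, which the paper's proof leaves implicit. One small point to tighten: the bijectivity of $f\colon P(c_{i-1})\to P(c_i)$ is not immediate from the degree-one hypothesis alone --- the homeomorphism $\tilde D\to D$ gives an \emph{injection} of marked points (a preimage of a marked point need not be marked), and surjectivity comes from running that injection around the cycle to see that all the $|P(c_i)|$ are equal. You need the genuine equality $f(P(c_i))=P(c_{i+1})$, not just an inclusion, for your forward-invariance induction, so this one-sentence patch should be made explicit.
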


We emphasize that Lemma~\ref{lem:DisjointLevyCycles} is only stated for Levy cycles, not for Levy multicycles.

\begin{proof}[Proof of Lemma~\ref{lem:DisjointLevyCycles}] Suppose $L\cup L'$ is not un-nested, so without loss of generality some curve $c$ of $L$ lies in the interior of some curve $c'$ of~$L'$.  Say that $L'$ consists of the curves $c'=c_0',\dots,c_{k-1}'$, in that order.  Since $L'$ is a Levy cycle we have for every $i \geq 0$ a unique component $c_i$ of $f^{-i}(c)$ that lies in the interior of $c_j'$, where $j \equiv i \mod k$, and moreover each $c_i$ maps to $c_{i-1}$ with degree 1.  As the invariant closure of $c$ is finite, the set of $c_i$ forms a Levy cycle $L''$.  By construction, $L'' \preceq L'$.   By Proposition~\ref{prop:LevyCycleProperties}(1) the curve $c$ can lie in at most one Levy cycle, and so $L''$ is equal to $L$.  The lemma follows.
\end{proof}

We are now ready to prove our main characterization of the canonical Thurston obstruction.

\begin{proof}[Proof of Proposition~\ref{prop:CanonicalObstructionStructure}]
Note first that if $L_1\preceq L_2$ are distinct un-nested Levy multicycles for~$f$, then either
\begin{enumerate}
    \item the curves of $L_2$ enclose more marked points than the curves of~$L_1$, or\smallskip
    \item $L_2$ has fewer curves than~$L_1$.
\end{enumerate}
It follows that there are no infinite ascending chains $L_1\preceq L_2\preceq L_3\preceq \cdots$ of distinct un-nested Levy multicycles, so $f$ must have at least one un-nested Levy multicycle $L$ that is maximal under~$\preceq$.  

Let $M$ be the invariant closure of~$L$.  We claim that $M$ is the canonical obstruction for~$f$.  The uniqueness of $L$ follows from this, since by Proposition~\ref{prop:LevyCycleProperties}(2) we can recover $L$ from $M$ by taking the components of $M$ that have an iterated preimage isotopic to themselves.

By Proposition~\ref{prop:ExteriorComponentCheck}, it suffices to prove that the first return map $f_E$ for the exterior complementary component $E$ of $M$ is unobstructed.
Suppose to the contrary that the first return map for the exterior component has a Levy cycle~$L_E$ in~$E$.  Then $L_E$ is also a Levy cycle for $f$,  and it does not intersect any of the curves of~$L$.  Moreover, since the curves of $L_E$ are essential in~$E$, no curve of $L_E$ can be isotopic to a curve of~$L$ in $(\R^2, P_f)$.  By Proposition~\ref{prop:LevyCycleProperties}(1), we can express $L$ as a disjoint union of Levy cycles. Let $L'$ be the multicurve obtained by taking the union of $L_E$ together with all Levy cycles of $L$ that do not fit inside of~$L_E$.   By Lemma~\ref{lem:DisjointLevyCycles} the multicurve $L'$ is un-nested, and is therefore an un-nested Levy multicycle.  But $L'$ is distinct from $L$ and $L\preceq L'$, which contradicts the maximality of~$L$.  We conclude that $f_E$ is unobstructed, so $M$ must be the canonical obstruction.
\end{proof}


\subsection{The Hubbard vertex}
\label{sec:hubbardbubble}

The goal of this section is to define the Hubbard vertex for an obstructed topological polynomial and describe some basic properties of it that will be used in the proof of the second statement of Theorem~\ref{thm:main}.  Specifically, we prove Proposition~\ref{prop:InvariantIterates}, which states all iterates of a given obstructed topological polynomial have the same Hubbard vertex.  We then prove Proposition~\ref{prop:PoirierConditionsBubbleTree}, which is an analogue of Proposition~\ref{prop:PoirierConditions} in that it gives a variant of Poirier's conditions for obstructed maps.  Finally, we prove Lemma~\ref{lem:FindingHubbardVertex}, which is an analogue of Lemma~\ref{lem:ContractExpandingCondition}.

We begin with the definition of the Hubbard vertex.  If $f$ is an obstructed topological polynomial, the canonical obstruction~$\Gamma_f$ is un-nested by Proposition~\ref{prop:CanonicalObstructionStructure}. Moreover, the first return map $f_E$ for the exterior component is an unobstructed topological polynomial by Selinger's characterization of the canonical obstruction, so $f_E$ has a topological Hubbard tree~$T$.
As in Section~2, the pair $(\Gamma_f,T)$ specifies a bubble tree $B_f$ in $(\mathbb{R}^2,P_f)$, which we refer to as the \textit{Hubbard bubble tree} for~$f$. (Like the topological Hubbard tree, the Hubbard bubble tree is only defined up to isotopy.)  The associated vertex of the augmented complex $\hat{\T}_n$ (where $n=|P_f|$) is the \textit{Hubbard vertex} for $f$, which we denote~$H_f$.


\begin{proposition}\label{prop:InvariantIterates}If $f$ is an obstructed topological polynomial, the Hubbard bubble tree for any iterate of $f$ is the same as the Hubbard bubble tree for $f$.
\end{proposition}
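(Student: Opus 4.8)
The plan is to show that the two data that determine the Hubbard bubble tree — the canonical obstruction $\Gamma_f$ and the topological Hubbard tree of the exterior first-return map — are each unchanged when $f$ is replaced by an iterate $f^k$. Since the Hubbard bubble tree $B_f$ is specified up to isotopy by the pair $(\Gamma_f, T)$ where $T$ is the topological Hubbard tree for the exterior first-return map $f_E$, it suffices to prove the two statements: (i) $\Gamma_{f^k} = \Gamma_f$, and (ii) the exterior first-return map of $f^k$ with respect to $\Gamma_{f^k}$ has the same topological Hubbard tree as the exterior first-return map of $f$ with respect to $\Gamma_f$.

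For step (i), first note that $P_{f^k} = P_f$, so $\Gamma_f$ and $\Gamma_{f^k}$ are both multicurves in the same surface $(\R^2, P_f)$. One clean route is to use Selinger's topological characterization directly: the canonical obstruction is the unique Thurston obstruction all of whose periodic-complementary-component first-return maps are homeomorphisms or unobstructed polynomials, and which is minimal with this property. So I would verify that $\Gamma_f$ remains a Thurston obstruction for $f^k$ satisfying conditions (1) and (2) of Selinger's characterization with respect to $f^k$. That $\Gamma_f$ is $f^k$-stable and is a Thurston obstruction for $f^k$ follows because any Levy cycle for $f$ of length $m$ gives a Levy cycle for $f^k$ (of length $m/\gcd(m,k)$, or one may simply observe that the union of preimages stabilizes and the contracting condition passes to iterates); the first-return map of a periodic complementary component $C$ of period $p$ under $f$ is a component-wise power of the first-return map under $f^k$, and a power of a homeomorphism is a homeomorphism while a power of an unobstructed polynomial is unobstructed (by Thurston rigidity / the Berstein--Levy criterion, an iterate of an unobstructed map is unobstructed). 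Minimality is inherited in the same way: a proper $f^k$-stable sub-multicurve whose first-return maps satisfy (1) would, after passing to a common power, violate minimality for $f$. Alternatively — and perhaps more simply — one can appeal to the original Pilgrim definition: $\Gamma_f$ consists of the curves whose geodesic length tends to $0$ under iteration of the Thurston pullback map $\sigma_f$ on $\Teich_{0,n+1}$, and the pullback map for $f^k$ is $\sigma_f^k$, which has the same orbit closure behavior, hence the same set of curves shrinking to zero length.

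For step (ii): by part (i) the exterior complementary component $E$ of $\Gamma_f$ is the same for $f$ and for $f^k$, and the first-return map $(f^k)_E$ is a power of $f_E$ — concretely, if $E$ has period $p$ under $f$ then under $f^k$ it has period $p/\gcd(p,k)$ and $(f^k)_E$ is a power of $f_E$ (fill in the precise exponent). But $f_E$ is an unobstructed topological polynomial, and as recorded in Section~\ref{sec:lifting} the Hubbard tree of an unobstructed (topological) polynomial equals the Hubbard tree of any of its iterates — because the Hubbard tree is defined purely from the post-critical set and the filled Julia set, both invariant under passing to iterates. Hence $T$ is unchanged, and therefore $B_{f^k} = B_f$ up to isotopy, giving $H_{f^k} = H_f$ in $\hat\T_n$.

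The main obstacle I anticipate is bookkeeping around periods: under $f^k$ a single periodic complementary component of $\Gamma_f$ may break into several distinct $f^k$-cycles, and the exterior component's first-return map is a power of $f_E$ only after tracking the right exponent; care is needed to be sure no new (Levy) obstruction can appear inside $E$ for $f^k$ that was invisible to $f$. This is controlled precisely because a Levy cycle for $(f^k)_E = f_E^{\,j}$ pulls back under iteration to a Levy cycle for $f_E$ (any essential curve shrinking under the pullback of a power shrinks under the pullback of the map itself), contradicting that $f_E$ is unobstructed. Making that pullback argument airtight — equivalently, invoking that an iterate of an unobstructed map is unobstructed — is the one place the argument needs to be written with some care rather than waved through.
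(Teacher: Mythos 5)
Your proposal is correct and follows essentially the same route as the paper: reduce to showing that the canonical obstruction and the Hubbard tree of the exterior first-return map are each unchanged under iteration; the paper simply cites Selinger's result that $\Gamma_{f^k}=\Gamma_f$ (where you sketch a proof, the minimality step of which is the only shaky part) and then notes that $(f^k)_E$ is $f_E^k$ up to isotopy. Your period bookkeeping for the exterior component is unnecessary, since for a topological polynomial the exterior complementary component of a stable multicurve is preserved by $f$ itself (period $1$), so the first-return map for $f^k$ there is exactly $f_E^k$.
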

\begin{proof}
Selinger proved that the canonical obstruction for any iterate $f^k$ of $f$ is the same as the canonical obstruction for $f$~\cite[Proposition 3.3]{selinger13}.  Furthermore, if $f_E$ is the first return map for $f$ on the exterior complementary component of~$\Gamma_f$, then $f_E^k$ is (up to isotopy) the first return map for~$f^k$ on that component.  Since the topological Hubbard tree $T$ for $f_E^k$ is the same as the topological Hubbard tree for~$f_E$, the Hubbard bubble trees for $f$ and $f^k$ are the same.
\end{proof}

Our next goal is to establish an analogue of Poirier's conditions for Hubbard bubble trees and prove that we can get from an arbitrary invariant tree to the Hubbard vertex through a forest expansion followed by a forest collapse.  The general outline is the same as for the unobstructed case in Section~\ref{sec:poirier}.

Let $f$ be a topological polynomial.  We say that a bubble tree $B$  is \textit{invariant} under $f$ if $\lambda_f(B)=B$.  If we specify $B$ by a pair $(M,T)$, then $(M,T)$ is invariant if the multicurve $M$ is invariant under $f$ and $T$ is an invariant tree for the first return map $f_E$ on the exterior complementary component of~$M$.  For example, the Hubbard bubble tree for an obstructed topological polynomial~$f$ is an invariant bubble tree.  

If $B$ is a bubble tree with multicurve $M$, then as above we refer to the curves of $M$ as bubbles.  In the case where $B$ is invariant under~$f$, we say that a bubble $c$ is \textit{periodic} if the corresponding marked point is periodic under the first return map for the exterior, i.e.~if some component of some iterated preimage of $c$ is isotopic to $c$.  We say that a bubble $c$ is \textit{critical} if the corresponding puncture is critical under the first return map, i.e.~if the curve of $f^{-1}(M)$ isotopic to $c$ maps to its image with degree two or greater.

As with invariant trees, every invariant bubble tree $B$ with exterior tree $T$ has a \textit{dynamical map} $f_*\colon T\to T$, namely the dynamical map on $T$ determined by~$f_E$.  We say that $B$ satisfies the \textit{angle condition} and \textit{expanding condition}, respectively, if $T$ satisfies the corresponding condition with respect to~$(f_E)_*$.

\begin{proposition}\label{prop:PoirierConditionsBubbleTree}
Let $f$ be a post-critically finite topological polynomial. If $f$ is obstructed and $B$ is the Hubbard bubble tree for $f$, then:
\begin{enumerate}
    \item $B$ satisfies the angle condition and the expanding condition, and\smallskip
    \item no periodic bubble of $B$ is critical.
\end{enumerate}
Conversely, if $f$ has an invariant bubble tree $B$ that satisfies conditions (1) and (2) above, then $f$ is obstructed and $B$ is the Hubbard bubble tree for~$f$.
\end{proposition}
\begin{proof}

Let $B$ be an invariant bubble tree for $f$.  Let $M$ be the multicurve of $B$, let $T_E$ be the exterior tree, and let $f_E$ be the first return map for the exterior complementary component of~$M$. 

Suppose first that $B$ is the Hubbard bubble tree for~$f$.  Then $T_E$ is a topological Hubbard tree for~$f_E$, so $T_E$ satisfies the angle and expanding conditions by Proposition~\ref{prop:PoirierConditions}. Furthermore, since $M$ is the canonical obstruction, by Proposition~\ref{prop:CanonicalObstructionStructure} it is the invariant closure of the outermost Levy cycle for~$f$. Then every periodic bubble of $M$ must be part of a Levy cycle by Proposition~\ref{prop:LevyCycleProperties}(2), and therefore no periodic bubble of $M$ is critical.

For the converse, suppose that $B$ satisfies the two conditions in the statement of the proposition, and let $L$ be the set of all periodic bubbles in~$M$.  Since no bubble of $L$ is critical and $L$ is un-nested, it is an un-nested Levy multicycle, and $M$ is the invariant closure of~$L$. Note then that $f$ is obstructed.  Moreover, since $B$ satisfies condition~(1), it follows from Proposition~\ref{prop:PoirierConditions} that $T_E$ is a topological Hubbard tree for~$f_E$, and therefore $f_E$ is unobstructed.  By Proposition~\ref{prop:ExteriorComponentCheck}, we conclude that $M$ is the canonical obstruction for~$f$, and therefore $B$ is the Hubbard bubble tree for~$f$.
\end{proof}

If $T$ is any tree and $F'$ is a subforest of $T$ that is not collapsible (meaning that some component of $F'$ contains more than one marked point), then the quotient $T/F'$ may be regarded as a bubble tree, with one bubble for each connected component of $F'$ that has more than one marked point.  Say that $F'$ is \emph{invariant} under $f$ if it is equal to its lift under $f$.  If $T$ is an invariant tree for a post-critically finite topological polynomial~$f$, then $T/F'$ corresponds to an invariant bubble tree for~$f$ if and only if $F'$ is an invariant subforest of~$T$.

\begin{lemma}\label{lem:FindingHubbardVertex}Let $f$ be an obstructed post-critically finite topological polynomial, and let $T$ be an invariant tree for $f$ that satisfies the angle condition. Let $F$ be the union of all periodic Julia edges of~$T$, and let $F'$ be the smallest invariant subforest of $T$ containing $F$.  Then $T/F'$ is the Hubbard vertex for~$f$.
\end{lemma}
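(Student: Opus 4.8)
The plan is to show that $B := T/F'$ is an invariant bubble tree for $f$ satisfying the two conditions of Proposition~\ref{prop:PoirierConditionsBubbleTree}, so that that proposition identifies $B$ with the Hubbard bubble tree for $f$ and hence identifies $T/F'$ with the Hubbard vertex $H_f$. The argument will parallel the treatment of the unobstructed case in Section~\ref{sec:p3}: where Lemma~\ref{lem:ContractExpandingCondition} produced a \emph{collapsible} invariant subforest (so that $T/F'$ was a tree), here $F'$ is allowed to have components with more than one marked point, and those components become the bubbles of $B$.

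First I would set up exactly as in the proof of Lemma~\ref{lem:ContractExpandingCondition}: after an isotopy fixing $P_f$ we may assume $f(T)\subseteq T$. A periodic Julia edge has no critical point in its interior and no critical endpoint, so $F$ contains no critical points of $f$ at all; moreover $f_*$ carries $F$ homeomorphically onto $F$ and is one-to-one in a neighborhood of $F$ in $\R^2$. The argument of that lemma then shows that every connected component of $F$ is a connected component of $F'$, and that these are precisely the components of $F'$ that are periodic under $f_*$. Since $F'$ is an invariant subforest of the invariant tree $T$, the quotient $B=T/F'$ is an invariant bubble tree, as recorded just before the statement of the lemma, and so Proposition~\ref{prop:PoirierConditionsBubbleTree} applies to it.

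It remains to verify the conditions of Proposition~\ref{prop:PoirierConditionsBubbleTree}. The \emph{angle condition} for $B$ follows verbatim from the argument in the proof of Proposition~\ref{prop:InvariantWithinTwo}: $F'$ consists of Julia edges and therefore contains no Fatou vertex, so the invariant angle assignment on the Fatou vertices of $T$ supplied by the hypothesis descends to an invariant angle assignment on the Fatou vertices of the exterior tree of $B$, and Lemma~\ref{lem:InvariantAnglesJuliaVertices} extends it to a full invariant angle assignment. For the condition that \emph{no periodic bubble of $B$ is critical}, observe that the periodic bubbles of $B$ come from the periodic components of $F'$, that is, from the components of $F$ that enclose at least two marked points; since $f_*$ maps $F$ homeomorphically onto $F$ and is locally one-to-one near $F$, the curve surrounding such a component is mapped by degree $1$ onto the curve surrounding its $f_*$-image, so the corresponding bubble is not critical. (These surrounding curves assemble into the outermost un-nested Levy multicycle of $f$, consistent with Proposition~\ref{prop:CanonicalObstructionStructure}.)

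The remaining and, I expect, most delicate point is the \emph{expanding condition} for $B$, namely that the exterior tree of $B$ has no periodic Julia edge. I would re-run the proof of Lemma~\ref{lem:ContractExpandingCondition}: a periodic Julia edge of the exterior tree pulls back to an edge $e$ of $T$ with $e\notin F'$ and $f_*^k(e)=\alpha e\beta$ for some $k\geq 1$, where each of $\alpha,\beta$ is trivial or a path in $F'$; since $e$ is not a periodic Julia edge, some endpoint $v$ of $e$ then lies in a periodic component of $F'$, i.e.\ in a component of $F$, and one-to-oneness of $f$ near $F$ gives a contradiction precisely as there. The one input that requires fresh justification in the obstructed setting is the assertion ``$e$ is a Julia edge of $T$'', i.e.\ that Fatou vertices of $T$ descend to Fatou vertices of the exterior tree of $B$; this reduces to showing that no periodic critical point of $f$ lies in a bubble of $B$, which holds because a periodic critical point is post-critical and hence a marked vertex of $T$, while if it lay in $F'$ it would lie in a periodic component of $F'$ --- a component of $F$ --- contradicting the fact that $F$ contains no critical points. (Note also that the single step in the proof of Lemma~\ref{lem:ContractExpandingCondition} that invoked unobstructedness --- ruling out a periodic cycle of components of $F$ each enclosing more than one marked point --- is exactly the place where the bubbles of $B$ now arise, and so causes no difficulty here.) With all conditions of Proposition~\ref{prop:PoirierConditionsBubbleTree} verified, that proposition yields that $B=T/F'$ is the Hubbard bubble tree for $f$, so $T/F'$ is the Hubbard vertex $H_f$.
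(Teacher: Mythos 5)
Your proof is correct and follows essentially the same route as the paper's: both verify the hypotheses of Proposition~\ref{prop:PoirierConditionsBubbleTree} by re-running the arguments of Lemma~\ref{lem:ContractExpandingCondition} (for the expanding condition and the non-criticality of periodic bubbles) and the angle-descent argument of Proposition~\ref{prop:InvariantWithinTwo} together with Lemma~\ref{lem:InvariantAnglesJuliaVertices}. Your write-up is in fact more careful than the paper's, which mostly cites those earlier proofs; in particular your observations that the only step of Lemma~\ref{lem:ContractExpandingCondition} using unobstructedness is exactly where the bubbles arise, and that periodic critical points cannot lie in $F'$ so Fatou vertices descend correctly, are precisely the points the paper leaves implicit.
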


\begin{proof}

Let $T/F'$ be the bubble tree obtained from $T$ by collapsing the subforest~$F'$, so that the bubbles of $T/F'$ correspond precisely to the simple closed curves that surround components of $F'$ that have more than one marked point. As in the proof of Lemma~\ref{lem:ContractExpandingCondition}, no connected component of $F$ has any critical points, and the connected components of $F$ are precisely the periodic connected components of~$F'$. It follows that $T/F'$ has no critical periodic bubbles.  As in the proof of Lemma~\ref{lem:ContractExpandingCondition}, the bubble tree $T/F'$ satisfies the expanding condition, and as in the proof of Proposition~\ref{prop:ExpandAngleCondition} it also satisfies the angle condition, so $T/F'$ is the Hubbard vertex by Proposition~\ref{prop:PoirierConditionsBubbleTree}.\end{proof}

We remark that unobstructed topological polynomials also may have invariant bubble trees that are not trees.  For instance, the tuning of the basilica polynomial with itself has an invariant bubble tree; condition (2) of Proposition~\ref{prop:PoirierConditionsBubbleTree} fails for this tree.


\subsection{Canonical form}\label{sec:canonical form} In this section we explain how to use the Hubbard bubble tree to give a complete topological description of an obstructed topological polynomial.  We will refer to this description as the canonical form for an obstructed map.  The canonical form will not be used in our proof of Theorem~\ref{thm:main}; rather, it will be used in our discussion of twisted $z^2+i$ problems in Sections~\ref{sec:z2+i} and~\ref{sec:genz2+i}.  

Let $f$ be an obstructed topological polynomial with post-critical set~$P = P_f$.  Then the Hubbard bubble tree $B$ for $f$ is a union $B_E\cup M$, where $B_E$ is the exterior forest and $M$ is the multicurve of bubbles (which must be isotopic to the canonical obstruction~$\Gamma_f$).  The preimage $\widetilde{B} = f^{-1}(B)$ is a bubble tree in $(\mathbb{R}^2,f^{-1}(P_f))$ with exterior forest $\widetilde{B}_E=f^{-1}(B_E)$ and multicurve $\widetilde{M}=f^{-1}(M)$.

Let $E$ be the exterior region for $M$, and let $\widetilde{E} = f^{-1}(E)$ be the exterior region for $\widetilde{M}$.  Both $E$ and $\widetilde E$ may be regarded as copies of $\R^2$, each with finitely many punctures corresponding to the bubbles.   We can view $E$ as having marked set $P_{f,E}=P_f\cap E$ and $\widetilde{E}$ as having marked set $\widetilde{P}_{f,E} = f^{-1}(P_{f,E})$.  We will refer to the restriction
\[
f_E\colon (\widetilde{E},\widetilde{P}_{f,E})\to (E,P_{f,E})
\]
of $f$ as the \textit{exterior map} for~$f$.  

Let $\Delta=\mathbb{R}^2\setminus E$, so $\Delta$ is a union of closed disks with $\partial\Delta = M$. Let $\widetilde{\Delta}=f^{-1}(\Delta)$, which is also a union of closed disks with $\partial\widetilde{\Delta}=\widetilde{M}$.  We can view $\Delta$ as having marked set $Q = Q_{\mathrm{int}}\cup Q_{\mathrm{bd}}$, where $Q_{\mathrm{int}} = P_f\cap \Delta$ is the set of post-critical points that lie inside the bubbles and $Q_{\mathrm{bd}} = B_E\cap M$ is the set points at which the bubbles intersect the exterior forest. Similarly, we can view $\widetilde{\Delta}$ as having marked set $\widetilde{Q}=f^{-1}(Q)$.  We will refer to the restriction
\[
f_I \colon (\widetilde{\Delta},\widetilde{Q})\to (\Delta,Q)
\]
as the \textit{interior map} for $f$.

We may regard $f_I$ as a collection of maps between closed disks.  Specifically, if $\widetilde{\Delta}_1,\ldots,\widetilde{\Delta}_m$ are the components of~$\widetilde{\Delta}$ and $\Delta_1,\ldots,\Delta_n$ are the components of~$\Delta$, then $f_I$ maps each $\widetilde{\Delta}_i$ to some~$\Delta_j$, so we can decompose $f_I$ into a collection of maps
\[
f_{ij}\colon (\widetilde{\Delta}_i,\widetilde{Q}\cap \widetilde{\Delta}_i)\to (\Delta_j,Q\cap \Delta_j).
\]
Each $f_{ij}$ is either a homeomorphism or a branched cover whose critical values are marked.  In particular, since no critical bubble of a Hubbard bubble tree is periodic, $f_{ij}$ must be a homeomorphism whenever $\widetilde{\Delta}_i$ is periodic under~$f$.

The pair of trees $\widetilde B_E$ and $B_E$ in $\widetilde E$ and $E$ can be extended in an arbitrary way to trees in $(\R^2,f^{-1}(P))$ and $(\R^2,P)$.  We therefore have the following consequence of the Alexander method.  

\begin{proposition}
\label{prop:canonical form}
Let $f$ be a post-critically finite topological polynomial, and let $f_E$ and $f_I$ be the exterior and interior maps for $f$.  Then $f$ is determined up to homotopy relative to~$f^{-1}(P_f)$ by the homotopy classes of the maps $f_E$ and $f_I$ relative to their respective sets of marked points.  
\end{proposition}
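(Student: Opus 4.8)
The plan is to present $f$, up to homotopy rel $f^{-1}(P_f)$, as the action of $f$ on a single spanning tree, and then to read off that action from $f_E$ and $f_I$. First I would build such a tree. For each component $\Delta_j$ of $\Delta$, choose a tree $S_j\subseteq\Delta_j$ whose vertex set is $Q\cap\Delta_j$ and which meets $\partial\Delta_j$ exactly in $Q_{\mathrm{bd}}\cap\partial\Delta_j$, arranged so that $\Xi:=B_E\cup\bigcup_j S_j$ is a tree in $(\R^2,P_f)$. Such a choice exists: each $\Delta_j$ is a disk, so there is no local obstruction, and collapsing all of the $\Delta_j$ carries $\Xi$ onto the exterior tree $T_E$, which is connected and simply connected, so the same holds for $\Xi$. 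Since $\Xi$ contains $P_f$ — the interior marked points lie on the $S_j$, the exterior marked points on $B_E$ — the Alexander method (Proposition~\ref{prop:alexander}) shows that $f$ is determined up to homotopy rel $f^{-1}(P_f)$ by the isotopy class of the tree $f^{-1}(\Xi)$ together with the combinatorial map $f^{-1}(\Xi)\to\Xi$.

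It then remains to recover this data from $[f_E]$ and $[f_I]$. Because the curves of $\Gamma_f$ avoid $P_f$ we have $P_f\cap\Delta\subseteq Q$, hence $f^{-1}(P_f)\cap\widetilde\Delta\subseteq\widetilde Q$, and also $f^{-1}(P_f)\cap\partial\widetilde\Delta=\emptyset$; thus the decomposition $S^2=\widetilde\Delta\cup\widetilde E$, where $\widetilde E$ denotes the closure of the complement of $\widetilde\Delta$, separates the set of marked points $f^{-1}(P_f)$ into its interior and exterior parts. Now $f^{-1}(\Xi)=\widetilde B_E\cup f^{-1}\bigl(\bigcup_j S_j\bigr)$, with $\widetilde B_E\subseteq\widetilde E$ and $f^{-1}\bigl(\bigcup_j S_j\bigr)\subseteq\widetilde\Delta$, and these two subtrees meet exactly in the finite set $f^{-1}(Q_{\mathrm{bd}})\subseteq\partial\widetilde\Delta$. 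On the exterior part, the combinatorial map $\widetilde B_E\to B_E$ is by definition part of the data of $f_E$; and since (by the Alexander-method assertion used to define $f_E$) the map $\widetilde B_E\to B_E$ determines $f_E$ up to homotopy rel $\partial\widetilde E$ and the exterior marked points, it also determines the cyclic positions of the points of $f^{-1}(Q_{\mathrm{bd}})$ on the circles of $\partial\widetilde E$ and the cyclic order of the edges of $\widetilde B_E$ at those points. On the interior part, $f^{-1}\bigl(\bigcup_j S_j\bigr)$ is the $f_I$-preimage of the fixed trees $S_j$, so its isotopy class rel $\widetilde Q$ — including the cyclic order of edges at the points of $f^{-1}(Q_{\mathrm{bd}})$ as seen from inside $\widetilde\Delta$ — and its combinatorial map to $\bigcup_j S_j$ depend only on the homotopy class of $f_I$ rel $\widetilde Q$. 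Splicing the two parts together along $f^{-1}(Q_{\mathrm{bd}})$ then recovers $f^{-1}(\Xi)$ up to isotopy rel $f^{-1}(P_f)$, together with the combinatorial map $f^{-1}(\Xi)\to\Xi$, and Proposition~\ref{prop:alexander} completes the proof.

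The step I expect to be the main obstacle is this splicing along $\partial\widetilde\Delta$: one has to check that $[f_E]$ and $[f_I]$ determine not just the two halves of $f^{-1}(\Xi)$ in isolation but also the way they are assembled inside $S^2$, namely the cyclic combinatorics at the points of $f^{-1}(Q_{\mathrm{bd}})$ and their cyclic placement on the boundary circles of $\widetilde\Delta$. An equivalent but clumsier route would be to splice directly a homotopy of $f|_{\widetilde\Delta}$ rel $\widetilde Q$ with a homotopy of $f|_{\widetilde E}$ rel $\partial\widetilde E$, but then one must first reconcile the restrictions of the two homotopies to $\partial\widetilde\Delta$; passing to the spanning tree $\Xi$ is a convenient way to organize that reconciliation. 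A minor routine point is to arrange $\Xi$ to have no superfluous valence-two vertices, so that it genuinely meets the definition of a tree in $(\R^2,P_f)$ before Proposition~\ref{prop:alexander} is invoked; smoothing such vertices changes nothing.
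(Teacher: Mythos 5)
Your proof is correct and follows essentially the same route as the paper: the paper derives the proposition as an immediate consequence of the Alexander method applied to the exterior restriction (via $\widetilde{B}_E\to B_E$) together with the fact that $f_I$ is literally the restriction of $f$ to $\widetilde\Delta$, leaving the gluing along $\partial\widetilde\Delta$ implicit. Your global spanning tree $\Xi=B_E\cup\bigcup_j S_j$ and the single application of Proposition~\ref{prop:alexander} is a legitimate and somewhat more explicit way to carry out that same splicing.
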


We refer to the pair $(f_E,f_I)$ as the \emph{canonical form} for $f$.  As in the statement of the proposition, we consider $f_E$ and $f_I$ to be defined up to homotopy relative to their respective sets of marked points.

We emphasize that $f_E$ maps the punctured surface $E$ to the punctured surface $\widetilde{E}$, and these punctures of course cannot move during homotopies.  Replacing these punctures with marked points would yield the first return map for the exterior component discussed earlier.  

The canonical form presented here is analogous to the Nielsen--Thurston normal form for mapping class groups; see \cite[Corollary 13.3]{primer}.  In general, the Nielsen--Thurston normal form is not canonical (for instance if $f=f_1f_2$, where $f_1$ and $f_2$ are supported on disjoint subsurfaces, each with a boundary component homotopic to the curve $c$, then we may also write $f$ as $f=(f_1T_c)(T_c^{-1}f_2)$).  On the other hand, the maps $f_E$ and $f_I$ in Proposition~\ref{prop:canonical form} are canonical.  A similar canonical form exists for braid groups; see the paper by the third author with Chen and Kordek \cite[Section 6]{CKM}.

\p{Infinitely many Levy cycles}
As a sample application of our canonical form, we give here an example of an obstructed topological polynomial $f$ that has infinitely many different Levy cycles. One way to accomplish this is to arrange for the Hubbard bubble tree to have a fixed bubble with at least three marked points whose interior maps to itself by the identity.  Then the interior of this fixed bubble will contain infinitely many essential curves, each of which is by itself a Levy cycle.

We will describe $f$ by giving its canonical form, including the Hubbard bubble tree $B$, its preimage $f^{-1}(B)$, the exterior map $f_E$, and the interior map $f_I$. Figure~\ref{fig:InfiniteLevyCycles} shows the Hubbard bubble tree $B$ and its preimage $f^{-1}(B)$.
\begin{figure}
\centering
\raisebox{-0.47\height}{\includegraphics{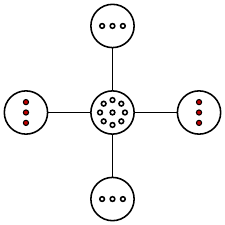}}\qquad$\overset{\textstyle f}{\longrightarrow}$\qquad
\raisebox{-0.47\height}{\includegraphics{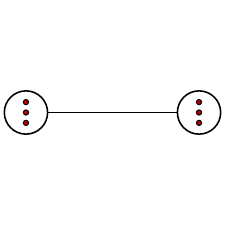}}
\caption{The Hubbard bubble tree $B$ and its preimage for a topological polynomial $f$ with infinitely many Levy cycles.  The rightmost bubble maps to itself by the identity.}
\label{fig:InfiniteLevyCycles}
\end{figure}
The exterior map $f_E$ is determined by $B$ and $f^{-1}(B)$, and the first return map for the exterior component is Thurston equivalent to the polynomial $z^4 - \sqrt[3]{2}$.  Under this Thurston equivalence, the left and right bubbles of $B$ correspond to the points $-\sqrt[3]{2}$ and $\sqrt[3]{2}$, respectively, and the top, center, and bottom bubbles of $f^{-1}(B)$ correspond to the points $i\sqrt[3]{2}$, $0$, and $-i\sqrt[3]{2}$, respectively.

We define the interior map $f_I$ to map the right bubble to itself by the identity, and map the top, left, and bottom bubbles of $f^{-1}(B)$ to the right bubble of $B$ by any homeomorphisms.  The center bubble of $f^{-1}(B)$ maps to the left bubble of $B$ by some degree four branched cover such that all three marked points of the left bubble are critical values. Figure~\ref{fig:InfiniteLevyCyclesInterior} shows an example of such a cover.  Since all three marked points of the left bubble are critical values, all three marked points of the right bubble are indeed post-critical, so the right bubble contains infinitely many different Levy cycles.
\begin{figure}
\centering
\raisebox{-0.47\height}{\includegraphics{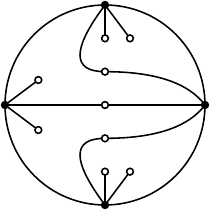}}\qquad$\overset{\textstyle f}{\longrightarrow}$\qquad
\raisebox{-0.47\height}{\includegraphics{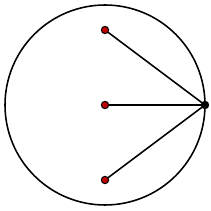}}
\caption{The portion of the interior map $f_I$ that maps the center bubble to the left bubble.}
\label{fig:InfiniteLevyCyclesInterior}
\end{figure}


\subsection{Proof of the theorem}
\label{sec:pfobs}

Like the proof of the first statement of Theorem~\ref{thm:main}, the proof of the second statement has two main steps.  In order to describe the steps, we require two definitions. In what follows, let $f$ be an obstructed post-critically finite topological polynomial.

First, we denote by $G_f$ the stabilizer of the Hubbard vertex $H_f$ in $\PMod(\R^2,P_f)$.  This is exactly the subgroup of $\PMod(\R^2,P_f)$ consisting of elements supported in the interiors of the components of the canonical obstruction~$\Gamma_f$.  
Next, we say that a vertex $T$ of $\T_n$ is invariant-modulo-$G_f$ under a map $\psi : \T_n \to \T_n$ if there is a $g \in G_f$ so that $\psi(T)=g \cdot T$.  

The two steps of the proof are:
\begin{enumerate}
    \item for each vertex $T$ of $\T_n$, some $\lambda_f^k(T)$ is invariant-modulo-$G_f$ under some power of $\lambda_f$, and \smallskip
    \item if a vertex $T$ of $\T_n$ is invariant-modulo-$G_f$ under a power of $\lambda_f$ then there is a vertex of the Levy set $L_f$ that is obtained from $T$ by a forest expansion. 
\end{enumerate}

As in Section~\ref{sec:p3}, we handle the second step in a separate proposition before proving the theorem.

\begin{proposition}
\label{prop:invtobs}
Let $f$ be an obstructed post-critically finite topological polynomial, let $n = |P_f|$, and let $T$ be a vertex of $\T_n$ that is invariant-modulo-$G_f$ under $\lambda_f$.  Then there is a vertex of the canonical Levy set $L_f \subseteq \T_n$ that is obtained from $T$ by a forest expansion. 
\end{proposition}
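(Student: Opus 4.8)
The plan is to reduce to the genuinely invariant case treated in Sections~\ref{sec:p2} and~\ref{sec:hubbardbubble} by absorbing the group element into a modified topological polynomial. Write $\lambda_f(T)=g\cdot T$ with $g\in G_f$, and fix a representing homeomorphism $\phi$ of $g$ that is supported in the interiors of the disks bounded by the curves of $\Gamma_f$. Set $f''=f\phi$. First I would record the elementary identity $\lambda_{fh}(X)=h^{-1}\cdot\lambda_f(X)$, valid for any $h\in\PMod(\R^2,P_f)$ and any vertex $X$ of $\T_n$ or of $\hat\T_n$; it follows from $(fh)^{-1}(X)=h^{-1}(f^{-1}(X))$ together with $h^{-1}(P_f)=P_f$, working with convex hulls (for $\T_n$) or with arc systems (for $\hat\T_n$). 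Taking $h=\phi$ gives $\lambda_{f''}(T)=g^{-1}\cdot\lambda_f(T)=g^{-1}\cdot(g\cdot T)=T$, so $T$ is an invariant tree for $f''$. One checks directly that $f''$ is a post-critically finite topological polynomial with $P_{f''}=P_f$, using that $\phi$ fixes $P_f$ pointwise and that $P_f$ is forward invariant (so that the $f''$-orbit of each critical point agrees with its $f$-orbit after the first step).

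Next I would show that $f''$ is obstructed with $\Gamma_{f''}=\Gamma_f$, so that $H_{f''}=H_f$ and $L_{f''}=L_f$. Since $\phi$ is the identity on each curve of $\Gamma_f$ and on the complement of the disks it bounds, the directed graph on the curves of $\Gamma_f$ that records which curve is isotopic to a component of the preimage of which, together with the associated local degrees, is identical for $f$ and for $f''$; in particular the outermost un-nested Levy multicycle $L$ of $f$ is still a Levy multicycle for $f''$, so $f''$ is obstructed. Using the identity of the previous paragraph, $\lambda_{f''}(H_f)=g^{-1}\cdot\lambda_f(H_f)=g^{-1}\cdot H_f=H_f$, so $H_f$ is an invariant bubble tree for $f''$. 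I would then verify the two conditions of Proposition~\ref{prop:PoirierConditionsBubbleTree} for $H_f$ with respect to $f''$: the condition that no periodic bubble is critical transfers from $f$ to $f''$ because the curve dynamics on $\Gamma_f$ and the local degrees agree; the angle and expanding conditions hold because the exterior first return map of $f''$ along $\Gamma_f$ coincides, up to isotopy relative to the marked points, with $f_E$, which is unobstructed — indeed, after the disks bounded by $\Gamma_f$ are collapsed to points in the first-return-map construction, $\phi$ becomes supported in a union of once-marked disks and is therefore isotopic to the identity. Proposition~\ref{prop:PoirierConditionsBubbleTree} then yields $H_{f''}=H_f$, hence $\Gamma_{f''}=\Gamma_f$ and $L_{f''}=L_f$.

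It then remains to run the obstructed-case machinery on the pair $(f'',T)$. By Proposition~\ref{prop:ExpandAngleCondition} there is an invariant tree $T'$ for $f''$ that is a forest expansion of $T$ and satisfies the angle condition, and by Lemma~\ref{lem:FindingHubbardVertex} the quotient $T'/F'$ is the Hubbard vertex $H_{f''}$, where $F'$ is the smallest $f''$-invariant subforest of $T'$ containing the periodic Julia edges. Because $f''$ is obstructed, $F'$ is not collapsible, and the bubbles of $H_{f''}=H_f$, namely the curves of $\Gamma_f$, are precisely the boundary curves of a regular neighborhood of the union of those components of $F'$ that contain at least two marked points. Hence $\Gamma_f$ is the boundary of a neighborhood of a subforest of $T'$, i.e.\ $T'$ is a vertex of $L_{f''}=L_f$; and $T'$ is obtained from $T$ by a forest expansion. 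This is exactly the assertion of the proposition.

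I expect the main obstacle to be the assertion that the exterior first return map of $f''$ along $\Gamma_f$ agrees, up to isotopy relative to the marked points, with $f_E$ — equivalently, the precise statement that pre-composing $f$ with a homeomorphism supported in the interiors of the canonical obstruction curves does not alter the exterior dynamics. Making this rigorous requires carefully tracking the first-return-map construction through the collapse of the obstruction disks; every other step is a routine combination of the identity $\lambda_{fh}=h^{-1}\lambda_f$ with Propositions~\ref{prop:ExpandAngleCondition} and~\ref{prop:PoirierConditionsBubbleTree} and Lemma~\ref{lem:FindingHubbardVertex}.
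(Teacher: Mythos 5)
Your argument is correct and follows essentially the same route as the paper's proof: replace $f$ by $fg$ (your $f''=f\phi$), observe that $T$ is genuinely invariant for $fg$ and that $fg$ has the same exterior first return map and hence the same Hubbard vertex as $f$ by Proposition~\ref{prop:PoirierConditionsBubbleTree}, then apply Proposition~\ref{prop:ExpandAngleCondition} and Lemma~\ref{lem:FindingHubbardVertex} to land in $L_f$. The step you flag as the main obstacle — that precomposing with a homeomorphism supported inside the obstruction disks does not change the exterior first return map — is exactly the justification the paper gives, stated there as immediate from the support condition.
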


\begin{proof}

By hypothesis, there exists $g \in G_f$ so that $\lambda_f(T) = g \cdot T$.  It follows that $g^{-1} \cdot \lambda_f(T) = T$ and so $\lambda_{fg}(T)=T$, i.e.~$T$ is an invariant tree for the map~$fg$. Since $g$ is supported on the interiors of the curves of the canonical obstruction $\Gamma_f$, the maps $f$ and $fg$ induce the same first return map $f_E$ on the exterior complementary component of~$\Gamma_f$, so it follows from Proposition~\ref{prop:PoirierConditionsBubbleTree} that $fg$ has the same Hubbard vertex as~$f$.

By Proposition~\ref{prop:ExpandAngleCondition}, there is a forest expansion $T'$ of $T$ that is invariant and satisfies the angle condition with respect to $fg$.  Since $fg$ is obstructed, it follows from Lemma~\ref{lem:FindingHubbardVertex} that the Hubbard vertex $H_{fg}$, hence $H_f$, it obtained from $T'$ by a forest collapse.  Therefore $T'$ lies in the Levy set~$L_f$.
\end{proof}

\begin{proof}[Proof of Theorem~\ref{thm:main}(2)]

As in the statement, let $f$ be an obstructed post-critically finite topological polynomial and let $n = |P_f|$.  Let $T_0$ be a vertex of $\T_n$.

Since the lifting map $\lambda_f : \hat \T_n \dasharrow \hat \T_n$ fixes the Hubbard vertex $H_f$, it follows that $\lambda_f$ induces a simplicial map on the ``partially augmented tree complex'' $\T_n^{\bigcdot}$ which is the subcomplex of $\hat \T_n$ spanned by $\T_n \cup H_f$.

Since $G_f$ fixes $H_f$, it follows that $G_f$ acts on $\T_n^{\bigcdot}$.  The quotient $\T_n^{\bigcdot}/G_f$ is a locally finite cell complex.  Indeed, the only vertex of $\T_n^{\bigcdot}$ that is not locally finite is $H_f$, and the action of $G_f$ on the set of edges incident to $H_f$ is cofinite.  This is because the quotient of $\hat \T_n$ by $\PMod(\R^2,P_f)$ is finite, and two cells incident to $H_f$ are in the same orbit under the stabilizer $G_f$ if and only if they are in the same orbit under $\PMod(\R^2,P_f)$.  

Let $\pi : \T_n^{\bigcdot} \to \T_n^{\bigcdot}/G_f$ denote the quotient map.  We consider the sequence of vertices $T_i = \lambda_f^i(T_0)$ and the corresponding sequence $\pi(T_i)$ in $\T_n^{\bigcdot}/G_f$.  As in the proof of Theorem~\ref{thm:main}(1), we may use the local finiteness of $\T_n^{\bigcdot}/G_f$, the fact that $\T_n^{\bigcdot}$ is connected, and the fact that $\lambda_f$ is simplicial to conclude that the sequence $\pi(T_i)$ is either periodic or pre-periodic.  

Thus, there is an $m \geq 0$ and an $r > 0$ so that
\[
\pi(\lambda_f^{m}(T_0)) = \pi(\lambda_f^{m+r}(T_0)).
\]
Let $T = \lambda^{m}(T_0)$.  The previous equality can be restated as follows: there exists a $g \in G_f$ so that
\[
\lambda_{f^r}(T) = g \cdot T.
\]
We thus have $g^{-1} \cdot \lambda_{f^r}(T) = T$, which means that $\lambda_{f^r g}(T) = T$.  By Proposition~\ref{prop:invtobs}, there is a vertex of $L_{f^r g}$ that is obtained from $T$ by a forest expansion.  Since $L_f = L_{f^r}$ by Proposition~\ref{prop:InvariantIterates} and since $g$ preserves $L_f$ we have that $L_{f^r g} = L_f$.  The theorem now follows.
\end{proof}

\p{Refinements of Theorem~\ref{thm:main}} Our proof of Theorem~\ref{thm:main} gives more information than what is given by the statement.  We give here two successive refinements that are immediate from the proof.

Let $X$ be a set of vertices of $\hat \T_n$.  We define $E(X)$ to be the subset of $\T_n$ consisting of all vertices obtained from an element of $X$ by performing a (possibly trivial) forest expansion.  Similarly, we define $C(X)$ to be the subset of $\T_n$ consisting of vertices obtained by performing a (possibly trivial) forest collapse along a collapsible forest.  We write $CE(X)$ for $C(E(X))$.  

Our proof of Theorem~\ref{thm:main} shows that the nucleus for an unobstructed $f$ is contained in $CE(H_f)$ and that the nucleus for an obstructed $f$ is contained in $C(L_f)$, hence in $CE(H_f)$.  This is the first of the two refinements of Theorem~\ref{thm:main}.

Consider, for example, the rabbit, co-rabbit, and airplane polynomials.  As shown in Figure~\ref{fig:nuclei}, the minimal nuclei for the rabbit and co-rabbit polynomials are equal to the corresponding sets $CE(H_R)$ and $CE(H_C)$.  On the other hand, the minimal nucleus for the airplane polynomial is strictly smaller than $CE(H_A)$: it is equal to $\{H_A\}$.  And so for all three polynomials, the refinement gives an improvement over what is stated in Theorem~\ref{thm:main}(1), but for the airplane polynomial the first refinement does not give the minimal nucleus.

As for our obstructed example $D_a^{-1} I$, where $I(z)=z^2+i$, we have that $E(H_{D_a^{-1} I})$ is equal to the given nucleus (the Levy set), and that $CE(H_{D_a^{-1} I})$ is strictly larger.  In fact, $CE(H_{D_a^{-1} I})$ is the 1-neighborhood of the Levy set, which is the nucleus guaranteed by Theorem~\ref{thm:main}(2).  In other words, for this example, the first refinement does not improve upon Theorem~\ref{thm:main}(2).

In order to state our second refinement, let $f$ be a post-critically finite topological polynomial and let $H_f$ be the Hubbard vertex for $f$ in $\hat \T_n$.  We define $E'(H_f)$ to be the subset of $\T_n$ consisting of all vertices that are obtained from $H_f$ by performing expansions at the Julia vertices.  For each vertex of $E'(H_f)$ we may then collapse any collection of edges that correspond to non-Julia edges in $H_f$ and that form a collapsible forest; we denote by $C'E'(H_f)$ the set of all vertices obtained in this way.   For any $f$ we have $C'E'(H_f) \subseteq CE(H_f)$.  

The second refinement of Theorem~\ref{thm:main} is that $C'E'(H_f)$ is a nucleus for $f$.  For the rabbit polynomial we have that $C'E'(H_R)$ is again equal to the minimal nucleus since $C'E'(H_R) \subseteq CE(H_R)$ and since $CE(H_R)$ was already equal to the minimal nucleus (and similarly for the co-rabbit polynomial).  As for the airplane polynomial, there are no Julia vertices in $H_A$, and there are no edges that can be contracted, either, and so the minimal nucleus $\{H_A\}$ is equal to $C'E'(H_A)$.  So in all three of these cases $C'E'(H_f)$ is exactly the minimal nucleus.  There are, on the other hand, examples of unobstructed topological polynomials, such as the rabbit polynomial tuned with the basilica polynomial, where the minimal nucleus is strictly smaller than $C'E'(H_f)$.

For the obstructed example $D_a^{-1} I$, the set $C'E'(H_{D_a^{-1} I})$ is equal to the nucleus given in the introduction (the Levy set).  And so for this example, the second refinement is an improvement over what is given by Theorem~\ref{thm:main}(2).


\section{Twisted polynomial problems}
\label{sec:rabbit}

The goal of this section is to give concrete applications of our tree lifting algorithm.  A feature of our methods is that they allow us to readily discover and give unified arguments for infinitely many recognition problems, with arbitrary numbers of post-critical points.

We emphasize that the recognition problems we solve in this section are intended to be a sampling of the types of problems that can be solved with our methods, and to demonstrate how our methods can be applied without added difficulty in the presence of arbitrarily many post-critical points. There are many other generalizations and variants of the twisted rabbit problem that one can solve using our methods; for instance one can solve problems that involve twisting along different curves, or that twist other families of polynomials. We expect that recognizing structure in the solutions of many such problems will allow for the discovery of new phenomena and lead to new questions.  

We begin in Section~\ref{sec:onear} by applying our methods to solve Hubbard's original twisted rabbit problem, first solved by Bartholdi--Nekrashevych.  Then in Section~\ref{sec:manyear} we explain a generalization of this problem to a family of rabbit polynomials with arbitrarily many post-critical points, which we call the twisted many-eared rabbit problem.  

In Section~\ref{sec:z2+i} we use our methods to solve another twisted polynomial problem where the rabbit polynomial is replaced by the polynomial $z^2+i$; again this was originally solved by Bartholdi--Nekrashevych.  Finally in Section~\ref{sec:genz2+i} we give a generalization of this problem to a family of polynomials with arbitrarily many post-critical points.  As in the introduction, an important distinction between the twisted rabbit problems and the twisted $z^2+i$ problems is that in the latter case the twisted polynomials are sometimes obstructed.  

\begin{figure}
$\underset{\textstyle\text{(a)}}{\includegraphics{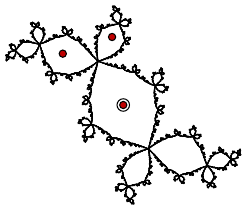}}
\qquad\qquad
\underset{\textstyle\text{(b)}}{\includegraphics{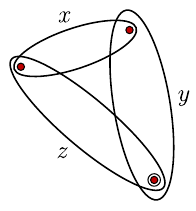}}$
\caption{(a) The Julia set for~$R$. (b) The curves $x$, $y$, and $z$ in $(\R^2,P_R)$}
\label{fig:generators}
\end{figure}

\subsection{The original twisted rabbit problem}
\label{sec:onear}

It follows from the Berstein--Levy theorem that if we post-compose the rabbit polynomial $R$ with an element of $\PMod(\R^2,P_R)$ the result is Thurston equivalent to either $R$, the co-rabbit polynomial $C$, or the airplane polynomial $A$.  Let $x$ be the curve in $(\R^2,P_R)$ shown in Figure \ref{fig:generators}, and let $D_x$ denote the Dehn twist about $x$ (as in the introduction, we take Dehn twists to be left-handed in this paper).  In this figure, and in each of the figures that follow, circled marked points are the critical points. (Bartholdi--Nekrashevych denote $D_x$ by $T$ and $D_z$ by $S$.)

The original twisted rabbit problem of Hubbard is:
\begin{quote}
    \emph{Let $m \in \Z$.  To which polynomial is $D_x^m R$ Thurston equivalent?}
\end{quote}

Bartholdi--Nekrashevych give an algorithm \cite[Theorem~4.8]{BaNe} that computes the Thurston equivalence class of $gR$ for any $g \in \PMod(\R^2,P_R)$. They also give a closed-form answer for the equivalence class of $D^m_xR$, showing that this can be read off from the 4-adic expansion of $m$ \cite[Theorem 4.7]{BaNe}.  Specifically, they show  that 
\[
D_x^m R \simeq
\begin{cases}
A&\text{ if the 4-adic expansion of }m\text{ contains a 1 or a 2}\\
R&\text{ if the 4-adic expansion of  }m\text{ contains only 0's and 3's and } m\geq 0\\
C&\text{ if the 4-adic expansion of }m\text{ contains only 0's and 3's and } m<0.
\end{cases}
\]
Here and throughout this section, we will use the symbol $\simeq$ to denote Thurston equivalence. 

\p{The basic strategy} Following Bartholdi--Nekrashevych, the basic strategy for our solution to the twisted rabbit problem has two parts.
\begin{enumerate}
    \item Give a set of reduction formulas that allow us to simplify $D_x^{m}R$ to one of the ``base cases,'' where $m \in \{-1,0,1\}$.
    \item Determine the base cases by showing that $D_x R \simeq A$ and $D^{-1}_x R \simeq C$.
\end{enumerate}
With these two steps in hand, it is straightforward to deduce the formula for $D_x^m R$ in terms of 4-adic expansions.  Before explaining the first step, we describe our main tool for producing the reduction formulas.

\begin{figure}
\centering
\raisebox{-0.47\height}{\includegraphics{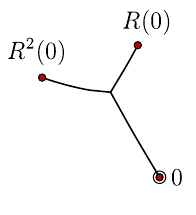}}
\qquad\quad
\raisebox{-0.47\height}{\includegraphics{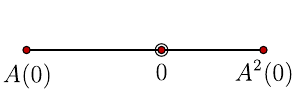}}
\quad\qquad
\raisebox{-0.47\height}{\includegraphics{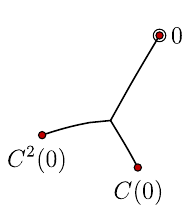}}
\caption{The Hubbard trees for \ $R$, \ $A$, \ and \ $C$}
\label{fig:HubbardTrees}
\end{figure}

\p{Lifting and lifting by borrowing} Let $f$ be a topological polynomial with post-critical set $P$.  There is an equivalence relation on $\PMod(\R^2,P)$ defined as follows: $g_1 \sim g_2$ if $g_1f \simeq g_2f$.  Our goal here is to give a procedure for replacing a given $g \in \PMod(\R^2,P)$ with another equivalent (and hopefully simpler) one.  

We require some preliminaries.  Let $h : (\R^2,P) \to (\R^2,P)$ be a homeomorphism that fixes $P$ pointwise.  We say that $h$ is \emph{liftable} through $f$ if there is a homeomorphism $\tilde h : (\R^2,P) \to (\R^2,P)$ that fixes $P$ pointwise and satisfies $f \tilde h = hf$.  The liftable mapping class group $\LMod(\R^2,P)$ is the subgroup of $\PMod(\R^2,P)$ consisting of elements with representatives that are liftable through $f$.  We emphasize here that $\LMod(\R^2,P)$ depends on $f$; it will be clear from context which topological polynomial is being used to define $\LMod(\R^2,P)$.  Also associated to $f$ there is a homomorphism 
\[
\psi : \LMod(\R^2,P) \to \PMod(\R^2,P)
\]
given by lifting through $f$.  The following lemma is due to Bartholdi--Nekrashevych.  They did not state it in exactly this form (for instance they only discuss the case of the rabbit polynomial), but the proof is the same \cite[Proposition 4.1]{BaNe}.

\begin{lemma}
\label{lem:liftbyborrow}
Let $f$ be a post-critically finite topological polynomial.  Let $g \in \PMod(\R^2,P)$ and let $h \in \PMod(\R^2,P)$ be any element with $h^{-1}g \in \LMod(\R^2,P)$.  Then
\[
g \sim \psi(h^{-1}g)h.
\]
\end{lemma}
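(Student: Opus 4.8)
The plan is to prove Lemma~\ref{lem:liftbyborrow} by a direct computation showing that $\psi(h^{-1}g)h$ and $g$ represent the same class in $\PMod(\R^2,P)$ modulo the relation $\sim$; that is, to show $\psi(h^{-1}g)h \cdot f \simeq g \cdot f$. First I would unpack the definitions. Write $k = h^{-1}g \in \LMod(\R^2,P)$, so that by definition of the liftable mapping class group there is a representative homeomorphism of $k$ (which I will also call $k$) together with a lift $\tilde k$ fixing $P$ pointwise and satisfying $f \tilde k = k f$. By definition of the homomorphism $\psi$, the class $\psi(k)$ is represented by $\tilde k$. So the claim to be verified is that $\tilde k \, h \, f$ is Thurston equivalent to $g f = h k f$.

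The key step is the chain of equalities $\tilde k h f = \tilde k (h f)$ versus $h k f = h (k f) = h (f \tilde k) = (h f) \tilde k$. These two maps $\tilde k (h f)$ and $(h f) \tilde k$ need not be equal on the nose, but they are conjugate: $(hf)\tilde k = (hf) \tilde k (hf)^{-1} (hf)$, which is visibly of the form (conjugate of $\tilde k$) composed with $hf$. More precisely, I would argue that Thurston equivalence is insensitive to this reordering. Indeed, if $\phi_0 = \mathrm{id}$ and $\phi_1 = \tilde k$ — wait, that does not quite fit the diagram. Let me instead use the standard fact: for a topological polynomial $F$ and a homeomorphism $u$ fixing $P$ pointwise, $uF \simeq F u$. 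This follows directly from the definition of Thurston equivalence by taking $\phi_0 = u$, $\phi_1 = \mathrm{id}$: then we need $\phi_0 F = F u \phi_1$, i.e. $u F = F u$ as maps from $(\R^2,P)$ — hmm, that is not an identity either. The correct bookkeeping is: $uF$ and $Fu$ are Thurston equivalent via $\phi_1 = u$ and $\phi_0 = \mathrm{id}$, since then the square reads $\phi_0 (uF) = (Fu)\phi_1$, i.e. $uF = Fu u = Fu^2$ — still wrong. So the actual identity I want is simply $u F u^{-1} \simeq F$ via $\phi_0 = \phi_1 = u$ (both isotopic to each other iff $u$ is isotopic to the identity, which it need not be — but Thurston equivalence allows $\phi_0,\phi_1$ isotopic to each other, and $u$ is isotopic to $u$). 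Then $\phi_0 F = (uFu^{-1})\phi_1$ becomes $uF = uFu^{-1}u = uF$. Good: so $u F u^{-1} \simeq F$ always, for any homeomorphism $u$ fixing $P$ pointwise.

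Armed with that, the computation runs: $g f = h k f = h f \tilde k$ (using $kf = f\tilde k$), and I want to compare this with $\psi(k) h f = \tilde k h f$. Now $h f \tilde k = (\tilde k^{-1})(\tilde k h f \tilde k) = \tilde k^{-1} (\tilde k h f) \tilde k$, so $hf\tilde k$ is conjugate to $\tilde k h f$ by $\tilde k$; hence by the conjugation fact with $u = \tilde k^{-1}$ applied to $F = \tilde k h f$ we get $hf\tilde k = \tilde k^{-1}(\tilde k h f)\tilde k \simeq \tilde k h f = \psi(k) h f$. Therefore $gf \simeq \psi(h^{-1}g) h f$, which is exactly the statement $g \sim \psi(h^{-1}g) h$. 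I would then remark that this is the argument of Bartholdi--Nekrashevych \cite[Proposition 4.1]{BaNe}, adapted to the present generality.

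The main obstacle, such as it is, is purely notational: keeping straight which maps are required to fix $P$ pointwise (so that all compositions live in the correct category of marked homeomorphisms and all the identities above make sense as equalities of based maps), and being careful that the conjugation trick only uses that $\tilde k$ fixes $P$ pointwise — which it does by the definition of liftability built into $\LMod(\R^2,P)$. I do not anticipate any genuine difficulty; the lemma is a formal consequence of the definitions of $\LMod$, $\psi$, and Thurston equivalence, together with the elementary observation that conjugating a topological polynomial by a homeomorphism fixing $P$ pointwise does not change its Thurston class.
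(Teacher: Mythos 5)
Your proof is correct and follows essentially the same route as the paper's: both rest on the defining identity $h^{-1}gf = f\,\psi(h^{-1}g)$ for the lift and on the fact that conjugating a topological polynomial by a homeomorphism fixing $P$ pointwise preserves its Thurston class (via $\phi_0=\phi_1=u$). The only difference is bookkeeping — you conjugate once by $\tilde k$ to pass from $hf\tilde k$ to $\tilde k hf$, while the paper conjugates the two sides of the equality $f\,\psi(h^{-1}g)h = h^{-1}gfh$ by $\psi(h^{-1}g)h$ and by $h$, respectively.
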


When $g$ already lies in $\LMod(\R^2,P)$, we may take $h=\textrm{id}$ in Lemma~\ref{lem:liftbyborrow}, and we obtain the following special case: 
\[
g \sim \psi(g).
\]
In words, $g$ is equivalent to the lift of $g$.  The more general case of Lemma~\ref{lem:liftbyborrow} can be phrased as: $g$ is equivalent to the mapping class obtained by the process of ``lifting by borrowing.'' 

\begin{proof}[Proof of Lemma~\ref{lem:liftbyborrow}]

Since $h^{-1}g$ lies in $\LMod(\R^2,P)$, we may apply the map $\psi$ to obtain the lift $\psi(h^{-1}g)$ of $h^{-1}g$ under $f$.  By the definition of a lift we have $h^{-1}gf=f\psi(h^{-1}g).$  Composing both sides of this equality with $h$ we obtain
\[
f\psi(h^{-1}g)h=h^{-1}gfh.
\]
Conjugation by $\PMod(\R^2,P)$ preserves the Thurston equivalence class.  So after conjugating the left side of the above equality by $\psi(h^{-1}g)h$ and the right side by $h$ we obtain that 
\[\psi(h^{-1}g)hf\simeq gf,\]
as desired.
\end{proof}

An additional difference between our Lemma~\ref{lem:liftbyborrow} and the corresponding statement in the work of Bartholdi--Nekrashevych is that they fix once and for all a set of coset representatives for $\LMod(\R^2,P)$, and they always take $h$ to lie in this set of coset representatives.  As a result, they obtain a well-defined set map $\bar \psi : \PMod(\R^2,P) \to \PMod(\R^2,P)$.

\p{A topological description of the rabbit polynomial} We will now use the Alexander method (Proposition~\ref{prop:alex2}) to give a combinatorial description of a topological polynomial that is homotopic to the rabbit polynomial.  We will use this substitute when computing the lifts of curves, so that the lifting operation can be carried out by means of combinatorial topology, rather than through an actual analytic map.  The topological description is a composition of a topological polynomial and a homeomorphism; we think of this description as being analogous to a decomposition of a mapping class into a product of Dehn twists.  In what follows we denote by $R$ the rabbit polynomial, by $P = \{0,R(0),R^2(0)\}$ the post-critical set of $R$, and by $\Delta$ the triangle in $\R^2$ with vertex set $P$.

Let $\Sq$ be any orientation-preserving double branched cover $(\R^2,P)\to(\R^2,P)$ that is branched over 0, and that fixes $\Delta$ pointwise.  Any such map fixes the isotopy class of any tree contained in $\Delta$.  Thus, it follows from Proposition~\ref{prop:alex2} that all such double covers are homotopic relative to $P$, so there is no ambiguity.

Next, let $\Rot$ be a homeomorphism of $(\R^2,P)$ that rotates the points $P$ counterclockwise and preserves $\Delta$ as a set.  Again, any two such maps are homotopic relative to $P$ (here we can even apply the Alexander method for mapping class groups).  

We claim that the map $\Rot\Sq$ is homotopic to the rabbit polynomial relative to $P$.  This can be seen by applying Proposition~\ref{prop:alex2} to the two maps $\Rot\Sq$ and $R$.  One convenient tree to use is the tripod contained in $\Delta$.  

\p{Basic facts about mapping class groups} Below, we will frequently use without mention the following basic facts from the theory of mapping class groups.  First, we have necessary and sufficient conditions for a power of a Dehn twist to lie in the group $\LMod(\R^2,P)$ for a topological polynomial $f$ of degree 2, and we also have descriptions of the lifts:

\medskip

\begin{enumerate}[itemsep=2ex]
    \item $D_c$ lifts if and only if $f^{-1}(c)$ has two components $\tilde c_1$ and $\tilde c_2$; in this case $\psi(D_c) = D_{\tilde c_1}D_{\tilde c_2}$, and
    \item $D_c^2$ lifts if $f^{-1}(c)$ has one component $\tilde c$; in this case $\psi(D_c^2) = D_{\tilde c}$.
\end{enumerate}

\medskip

We will also use the formula
\[
hD_ch^{-1}=D_{h(c)}
\]
for any $h, D_c \in \PMod(\R^2,P)$.  Finally, we will also use several applications of the lantern relation \[
D_xD_yD_z=\textrm{id}
\]
where $x$, $y$, and $z$ are the curves in Figure~\ref{fig:generators}.  All of these facts (or versions of them) are also used by Bartholdi--Nekrashevych \cite[Section 4]{BaNe}.

\p{A triviality lemma} It will be useful in our resolution of the twisted rabbit problem and the twisted many-eared rabbit problem to have a condition under which a power of a Dehn twist lifts through a polynomial to the trivial mapping class.  We require some preliminaries.  Let $f$ be a topological polynomial of degree 2.  First, a \emph{branch cut} will mean any arc $b$ in $(\R^2,P_f)$ that connects the critical value to $\infty$.  The preimage $f^{-1}(b)$ is a pair of arcs in $(\R^2,P_f)$ that connect the critical point to $\infty$ and that intersect only at the critical point.  We say that $b$ is \emph{special} if all of the points of $P_f$ lie on one side of $f^{-1}(b)$.  Each point of $P_f$ that is not the critical value has two preimages, one in $P_f$ and one not in $P_f$.   Necessarily the marked and unmarked preimages lie on opposite sides of $f^{-1}(b)$.

Next, suppose $c$ is a curve in $(\R^2,P_f)$ that surrounds exactly two points $p_1$ and $p_2$ of $P_f$.  Then $c$ is the boundary of a neighborhood of an arc $a$ in $(\R^2,P_f)$ connecting $p_1$ to $p_2$; we refer to $a$ as a \emph{defining arc} for $c$.  The defining arc $a$ is well defined up to isotopy.  

\begin{lemma}\label{lem:triviality}
Let $f$ be a topological polynomial of degree 2 and let $b$ be a special branch cut for $f$.  Suppose $c$ is a curve in $(\R^2,P_f)$ that surrounds exactly two points of $P_f$, neither of which is the critical value, and that $a$ is a defining arc for $c$.  If $a$ is transverse to $b$ and crosses $b$ in an odd number of points, then the lift of $D_c$ is trivial.
\end{lemma}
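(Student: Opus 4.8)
The plan is to compute the preimage $f^{-1}(c)$ explicitly and to check that each of its components bounds a disk containing at most one marked point, so that the criterion recalled above for lifting a Dehn twist — namely that $D_c$ lifts through a degree $2$ map exactly when $f^{-1}(c)$ has two components $\widetilde c_1,\widetilde c_2$, in which case $\psi(D_c)=D_{\widetilde c_1}D_{\widetilde c_2}$ — forces $\psi(D_c)=\mathrm{id}$. First I would set up notation: write $L=f^{-1}(b)$, so by hypothesis $L$ is an embedded line through the critical point $p_0$ and $\R^2\setminus L$ has exactly two components $U_1$ and $U_2$. Since $b$ contains the only critical value and $\R^2\setminus b$ is simply connected, $f$ restricts to a homeomorphism $U_i\to\R^2\setminus b$ for $i=1,2$. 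The hypothesis that $b$ is special says precisely that, after relabeling, every marked point other than $p_0$ lies in $U_1$; in particular $P_f\cap U_2=\varnothing$. Because $a$ is transverse to $b$ it avoids the endpoints of $b$, hence the critical value, so $f$ restricts to an honest double cover over a neighborhood of $a$; as $a$ is an arc this cover is trivial and $f^{-1}(a)=\widetilde a_1\sqcup\widetilde a_2$ is a disjoint union of two arcs, each carried homeomorphically onto $a$ by $f$.

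Next I would track the endpoints of $\widetilde a_1$ and $\widetilde a_2$. For each $i$ we have $\widetilde a_i\cap L=(f|_{\widetilde a_i})^{-1}(a\cap b)$, a set of exactly $|a\cap b|$ points, and these are transverse intersections of the arc $\widetilde a_i$ with the line $L$. No endpoint of $\widetilde a_i$ lies on $L$: such an endpoint is a preimage of $p_1$ or $p_2$, and if it lay on $L$ then $p_1$ or $p_2$ would lie on $b$, which is impossible since $p_1,p_2\in P_f$ are not endpoints of $b$ and the interior of $b$ misses $P_f$. Since $L$ separates the plane, $\widetilde a_i$ switches between $U_1$ and $U_2$ at each of its crossings with $L$, so its two endpoints lie in the same component of $\R^2\setminus L$ if and only if $|a\cap b|$ is even. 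As $|a\cap b|$ is odd, one endpoint of $\widetilde a_i$ lies in $U_1$ and the other in $U_2$.

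Finally I would conclude. The interior of $\widetilde a_i$ misses $P_f$, since its image $\mathrm{int}(a)$ does and $f(P_f)\subseteq P_f$; hence the only marked points that a thin regular neighborhood $N(\widetilde a_i)$ can contain are the two endpoints of $\widetilde a_i$. But one of those endpoints lies in $U_2$, and $P_f\cap U_2=\varnothing$, so $N(\widetilde a_i)$ contains at most one marked point and $\widetilde c_i:=\partial N(\widetilde a_i)$ is inessential, giving $D_{\widetilde c_i}=\mathrm{id}$. Since $f^{-1}(c)=f^{-1}(\partial N(a))=\partial N\bigl(f^{-1}(a)\bigr)=\widetilde c_1\sqcup\widetilde c_2$ has two components, the lifting criterion applies, so $D_c$ is liftable and $\psi(D_c)=D_{\widetilde c_1}D_{\widetilde c_2}=\mathrm{id}$, as desired.

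The step I expect to be the main obstacle is converting the parity of $|a\cap b|$ into the side-switching behavior of the lift $\widetilde a_i$ relative to $L$: making this precise requires the local model of the branched double cover near the branch cut — one must check that a path meeting $b$ transversely lifts to a path meeting $L$ transversely, and hence toggling between $U_1$ and $U_2$ — together with enough care with transversality to guarantee that $a$, and therefore its lift, avoids the critical point so that $f^{-1}(a)$ is genuinely a $1$-manifold. The remaining steps are routine manipulations of regular neighborhoods.
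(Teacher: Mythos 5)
Your proposal is correct and follows essentially the same route as the paper: lift $a$ to two disjoint arcs, use the parity of $a\cap b$ to show each lifted arc crosses $f^{-1}(b)$ an odd number of times and hence has endpoints on opposite sides, and conclude that each component of $f^{-1}(c)$ surrounds at most one marked point, so $\psi(D_c)=D_{\widetilde c_1}D_{\widetilde c_2}=\mathrm{id}$. The paper's version is terser (it phrases the conclusion as each arc joining a point of $P_f$ to a point of $f^{-1}(P_f)\setminus P_f$), but the argument is the same.
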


\begin{proof}

Since $c$ does not surround the critical value of $f$, the arc $a$ does not have an endpoint at the critical value.  Therefore $f^{-1}(a)$ is a pair of arcs that are disjoint, including at their endpoints.  Since $a$ intersects $b$ in an odd number of points, the endpoints of each component of $f^{-1}(a)$ lie on opposite sides of $f^{-1}(b)$.  Since $b$ is a special branch cut, it follows that each component of $f^{-1}(a)$ connects a point of $P_f$ to a point of $f^{-1}(P_f) \setminus P_f$.  The lift $\psi(D_c)$ is equal to the product of the Dehn twists about the curves of the boundary of a neighborhood of $f^{-1}(a)$.  Since each such curve surrounds only one point of $P_f$, this product is trivial.
\end{proof}

\p{A convenient notation}  Let $f$ be a topological polynomial and let $\LMod(\R^2,P)$ and $\psi$ be the associated liftable mapping class group and homomorphism.  We will use the notation
\[
g_1 \stackrel{h}{\leadsto} g_2 
\]
to mean that $g_2 = \psi(h^{-1}g_1)h$ (the element $h$ above the arrow lies in the coset $g_1\LMod(\R^2,P)$ as in Lemma~\ref{lem:liftbyborrow}).  We emphasize that, by Lemma~\ref{lem:liftbyborrow}, we have that $g_1 \stackrel{h}{\leadsto} g_2$ implies $g_1 \sim g_2$.  When $h=\textrm{id}$ we write $g_1 \leadsto g_2$.  In such a case, since $\psi$ is a homomorphism on $\LMod(\R^2,P)$ we have $g_1^k \leadsto g_2^k$ for any $k$.  While the arrow notation $g_1 \leadsto g_2$ belies the fact that $g_1$ and $g_2$ are equivalent, it is on the other hand meant to suggest a simplification process.

\p{Reduction formulas} We are now ready to explain the first of the two steps in our solution to the twisted rabbit problem.  The reduction formulas of Bartholdi--Nekrashevych are:
\[
D_x^{m} R\simeq 
\begin{cases} D_x^{k}R&m=4k\\
D_x R&m=4k+1\\
D_xR&m=4k+2\\
D_x^k R&m=4k+3.
\end{cases}
\]
We give here our version of the Bartholdi--Nekrashevych calculations that justify the reduction formulas \cite[Section 4]{BaNe}.  Our calculations in Section~\ref{sec:manyear} will be modeled on these.  Our arguments are much shorter than the corresponding ones by Bartholdi--Nekrashevych; where they use iterated monodromy groups to compute the lifts of Dehn twists, we simply lift the corresponding curves.  

\begin{figure}
\centering
\raisebox{-0.47\height}{\includegraphics{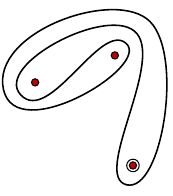}}
$\qquad\quad\longrightarrow\quad\qquad$
\raisebox{-0.47\height}{\includegraphics{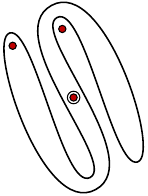}}
\caption{The curve $D_x^{-1}(y)$ and its lift $\lambda_{\Sq}\Rot^{-1}(D_x^{-1}(y))$}
\label{fig:rabbit_lift}
\end{figure}

To begin, we observe the following facts.  In what follows, let $y$ and $z$ be the curves in $(\R^2,P)$ shown in Figure~\ref{fig:generators}.  The curve $z$ has two curves in its preimage under $R$, and the only essential one is homotopic to $x$.   Therefore, as above, $D_z$ lies in $\LMod(\R^2,P)$ and $\psi(D_z)=D_x$; we may write this as $D_z \leadsto D_x$.  The preimage of $x$ has a single component, isotopic to $y$.  Therefore $D_x$ does not lie in $\LMod(\R^2,P)$, but $D_x^2$ does and $\psi(D_x^2) = D_y$, or $D_x^2 \leadsto D_y$.  Similarly $D_y^2 \leadsto D_z$.  Finally, it follows from Lemma~\ref{lem:triviality} that $D_{D_x^{-1}(z)} \leadsto \textrm{id}$; the special branch cut here is the straight ray from the critical value to $\infty$ that avoids the interior of the triangle determined by $P$.  

\bigskip

\noindent \emph{Case 1: $m=4k$.} In this case we have
\[
D_x^{4k}=(D_x^2)^{2k}\leadsto D_y^{2k} = (D_y^{2})^k\leadsto D_z^k \leadsto D_x^k.
\]
Thus $D_x^{4k} \sim D_x^k$, as desired.

\bigskip

 \noindent\emph{Case 2: $m=4k+1$.}  
 In this case we require one additional fact, namely that  $D^2_{D_x^{-1}(y)} \leadsto D_z$.  This follows from the fact that  $R^{-1}(D_x^{-1}(y))$ has one component, namely $z$; see Figure \ref{fig:rabbit_lift}.  We have
\[
D_x^{4k+1}\stackrel{D_x}{\leadsto}D_y^{2k}D_x\stackrel{D_x}{\leadsto}D_z^k D_x
\stackrel{D_x}{\leadsto}
D_x.
\]
Thus, $D_x^{4k+1} \sim D_x$, as desired.

\bigskip

\noindent\emph{Case 3: $m=4k+2$.} In this case we have
\[ D_x^{4k+2}
\leadsto D_y^{2k+1}
\stackrel{D_y^{-1}}{\leadsto}
D_z^{k+1}D_y^{-1}=D_z^{k+2}D_x
\stackrel{D_x}{\leadsto}
D_x,\]
where the equality uses the lantern relation.  Thus $D_x^{4k+2}\sim D_x$ as desired.
\bigskip

\noindent\emph{Case 4: $m=4k+3$.}  In this case we have 
\begin{align*}
D_x^{4k+3}
&\stackrel{D_x}{\leadsto}
D_y^{2k+1}D_x=D_y^{2k}D_x^{-1}D_z^{-1}D_x = D_y^{2k}D_{D_x^{-1}(z)}^{-1}
\leadsto
D_z^k
\leadsto
D_x^k,
\end{align*}
where in the first equality we used the lantern relation.  Thus, $D_x^{4k+3} \sim D_x^k$, as desired.

\begin{figure}
\centering
$\begin{array}{c}
\raisebox{-0.47\height}{\includegraphics{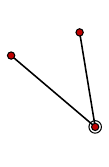}}
\xrightarrow{\textstyle D_x^{-1}}\;
\raisebox{-0.47\height}{\includegraphics{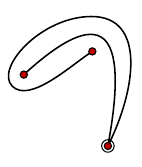}}
\;\xrightarrow{\textstyle \mathrm{Rot}^{-1}}\;
\raisebox{-0.47\height}{\includegraphics{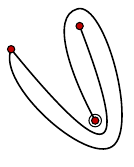}}
\xrightarrow{\textstyle \lambda_{\mathrm{Sq}}}
\raisebox{-0.47\height}{\includegraphics{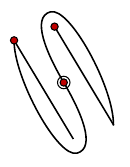}}
=
\raisebox{-0.47\height}{\includegraphics{RabbitBigLift5}}
\end{array}$
\caption{An invariant tree for $D_xR$}
\label{fig:invariant_under_DxR}
\end{figure}

\begin{figure}
\centering
$\begin{array}{c}
\raisebox{-0.47\height}{\includegraphics{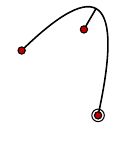}}
\xrightarrow{\textstyle D_x}\;
\raisebox{-0.47\height}{\includegraphics{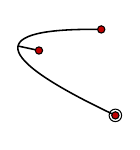}}
\;\xrightarrow{\textstyle \mathrm{Rot}^{-1}}\;
\raisebox{-0.47\height}{\includegraphics{Rabbit2ndBigLift1}}
\xrightarrow{\textstyle \lambda_{\mathrm{Sq}}}
\raisebox{-0.47\height}{\includegraphics{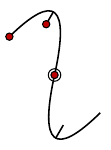}}
=
\raisebox{-0.47\height}{\includegraphics{Rabbit2ndBigLift1}}
\end{array}$
\caption{An invariant tree for $D_x^{-1}R$}
\label{fig:invariant_under_Dx-1R}
\end{figure}

\p{The base cases} We now execute the second step of the solution to the twisted rabbit problem, which is to show that $D_xR \simeq A$ and that $D_x^{-1}R \simeq C$.  Here we use the Alexander method in place of the theory of iterated monodromy groups.  

In the two base cases we replace $D_x^{m} R$ with $D_x^{m}\Rot\Sq$.  We further observe that 
\[
\lambda_{D_x^{m}\Rot\Sq}=\lambda_{\Sq}\Rot^{-1} D_x^{-m}.
\]
We begin with the case $m=1$.  The leftmost tree in Figure~\ref{fig:invariant_under_DxR} is invariant under $\lambda_{\Sq}\Rot^{-1} D_x^{-1}$.  Since there is no tree with 2 edges invariant under $\lambda_R$ or $\lambda_C$ (cf. Figure~\ref{fig:nuclei}), we conclude that $D_x\Rot\Sq$, hence $D_xR$, is Thurston equivalent to $A$.

We now treat the case $m=-1$.  The leftmost tree in Figure~\ref{fig:invariant_under_Dx-1R} is invariant under $\lambda_{\Sq}\Rot^{-1} D_x$, and the action of $D_x^{-1}\Rot\Sq$ on the tree is a clockwise rotation (meaning that the half-edges emanating from the vertex of degree 3 are permuted in the clockwise direction).  Since the nucleus for $A$ contains no tripods, and since the invariant tree for $R$ is rotated in the counterclockwise direction by $R$, it follows that $D_x^{-1}R$ is Thurston equivalent to $C$.

\begin{figure}
\centering
$\begin{array}{c}
\raisebox{-0.47\height}{\includegraphics{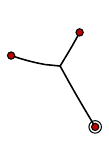}}
\xrightarrow{\textstyle D_x^{-1}}
\raisebox{-0.47\height}{\includegraphics{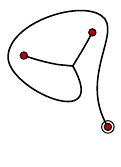}}
\xrightarrow{\textstyle \mathrm{Rot}^{-1}}
\raisebox{-0.47\height}{\includegraphics{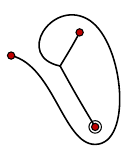}}
\xrightarrow{\textstyle\lambda_{\mathrm{Sq}}} \raisebox{-0.47\height}{\includegraphics{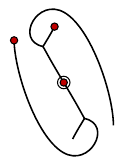}}
=
\raisebox{-0.47\height}{\includegraphics{RabbitBigLift5}}
\end{array}$
\caption{Applying the tree lifting algorithm to find the Hubbard tree for $D_xR$}
\label{fig:tree_lift}
\end{figure}

\p{The hidden role of the tree lifting algorithm} Our approach to the base cases of the twisted rabbit problem is very direct: to show the given Thurston equivalences, we simply produce the correct topological Hubbard trees.  But how would one guess the topological Hubbard trees?  The answer is not to guess, but to apply our tree lifting algorithm.  For example, Figure~\ref{fig:tree_lift} shows the image of $H_R$ under $\lambda_{D_xR}$.  So after a single iterate of the tree lifting algorithm, we arrive at the Hubbard tree for $D_xR$, and hence recognize that $D_xR$ is equivalent to $A$.  Using the tree lifting algorithm is in fact how we determined that these maps are equivalent in the first place.  After the fact, however, this step is not required for the proof.

\bigskip\noindent\emph{Why 4-adic?} We can see from our solution to the twisted rabbit problem why the 4-adic expansion of the power $m$ appears in the answer.  In the derivation of the reduction formulas, the lifting operation permutes the curves $x$, $y$, and $z$ cyclically.  Each time we lift an even power of $D_x$ or $D_y$, the power divides by 2, and when we lift a power of $D_z$, the power stays the same.  The reason is because $x$ and $y$ surround the critical value, while $z$ does not.  So when repeatedly lifting a power of $D_x$ that is divisible by 4, we ``lose'' a factor of 4 in the power for every three iterations of the lifting map (as in Case 1 of the reduction formulas).  We will see the same phenomenon in our solution to the twisted many-eared rabbit problem below.

\subsection{The twisted many-eared rabbit problem}
\label{sec:manyear}

In this section we pose and give a closed-form answer to a generalization of the Hubbard's twisted rabbit problem. To pose the problem, we turn to the setting of post-critically finite topological polynomials $f$ with $|P_f| > 3$. 

\p{Quadratic polynomials with periodic critical point} Consider the quadratic polynomials of the form $z^2+c$ where the unique critical point (namely, 0) is $n$-periodic. Denote this set $PC_{n}$ (the PC stands for ``periodic critical point'').  For $n=3$ we have $PC_3 = \{R,C,A\}$.  The cardinality of $PC_n$ grows exponentially with $n$ and its elements correspond to the solutions of $0=f^{n}(0)$ that do not satisfy the same equation for any smaller value of $n$. 

As in the case $n=3$, it follows from the Berstein--Levy theorem that if we compose any element $f$ of $PC_n$ with an element of $\PMod(\R^2,P_f)$ then the result is Thurston equivalent to a polynomial and is in particular Thurston equivalent to an element of $PC_n$. 

\p{The set of $1/n$-rabbit polynomials} For each $n \geq 3$ and $1\leq q < n$ relatively prime to $n$, there is a polynomial in $PC_n$ called the $q/n$-rabbit polynomial.  In this paper we will focus on the $1/n$-rabbit polynomials, which we denote $R_n$.  One description of $R_n$ is that its address in the Mandelbrot set is $1/n$.  Another description of $R_n$ is that it is a quadratic polynomial $R_n(z) = z^2+c$ with the following properties: (1) the critical point 0 is $n$-periodic, (2) the post-critical set lies on the boundary of a convex polygon $\Delta$, (3) the Hubbard tree $H_{R_n}$ is an $n$-pod contained in $\Delta$  (here, an $n$-pod is a tree with leaves at all $n$ post-critical points and with one unmarked vertex of degree $n$), and (4) the action of $R_n$ on $H_{R_n}$ is counterclockwise rotation by $1/n$; cf. Figure~\ref{fig:ngen}.  We will give a combinatorial description of $R_n$ below.  

The $1/n$-rabbit polynomial is sometimes called the $(n-1)$-eared rabbit polynomial, so that the $1/3$-rabbit polynomial $R_3$ is the usual rabbit polynomial and the $1/4$-rabbit polynomial $R_4$ is the 3-eared rabbit polynomial. (The so-called basilica polynomial $z^2-1$ can be thought of as the $1/2$-rabbit polynomial $R_2$, and the map $z^2$ can be thought of as the $1/1$-rabbit polynomial.)    The $q/n$-rabbit polynomial is described in the same way as the $1/n$-rabbit polynomial, with the $1/n$-rotation replaced by a $q/n$-rotation.  For example, the $2/3$-rabbit polynomial is the co-rabbit polynomial.

\p{Statement of the problem} For each $1/n$-rabbit polynomial $R_n$, we define $x_n$ to be the curve in $\R^2$ obtained as the boundary of a regular neighborhood of the straight line segment between $R_n(0)$ and $R^{2}_n(0)$.  For instance $x_6$ is the curve shown in Figure~\ref{fig:ngen}.

\begin{figure}
\centering
$\underset{\textstyle\text{(a)}}{\includegraphics{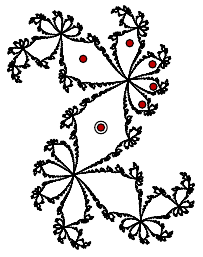}}$
\qquad\qquad\quad
$\underset{\textstyle\text{(b)}}{\includegraphics{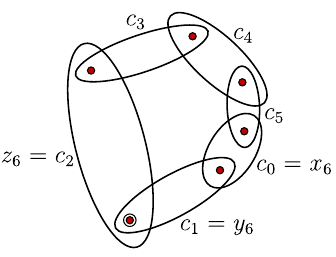}}$
\caption{(a) The Julia set and post-critical set of $R_6$.  (b) The curves $c_i$ for $R_6$}
\label{fig:ngen}
\end{figure}

We are now ready to state our twisted many-eared rabbit problem: 
\begin{quote}
\emph{Let $m \in \Z$.  To which polynomial is $D_{x_n}^m R_{n}$ Thurston equivalent?} 
\end{quote}
The cases $n=1$ and $n=2$ are trivial, and the case $n=3$ is Hubbard's original problem. 

In what follows we give a closed-form answer to the twisted many-eared rabbit problem for $n \geq 4$.  Our analysis treats all $n \geq 4$ with a single argument. As in Section~\ref{sec:onear}, we proceed in two steps, first producing reduction formulas and then computing base cases.

\p{The answer} Before stating the answer to our twisted many-eared rabbit problem, Theorem~\ref{thm:many} below, we need to describe the polynomials that appear.  There are three sequences of polynomials, $A_n$, $B_n$, and $K_n$, all defined for $n \geq 4$ and lying in $PC_n$.  The polynomial $A_n$ is the real polynomial of period $n$ that is furthest from the main cardioid of the Mandelbrot set.  The polynomial $A_4$ is sometimes called the airbus polynomial, as it is a period 4 version of the airplane polynomial $A \in PC_3$.  The polynomial $B_n$ is the second furthest real polynomial of period $n$ from the main cardioid of the Mandelbrot set; the polynomial $B_4$ is the tuning of the basilica polynomial with the basilica polynomial (the other polynomials $B_n$ are not tunings).  The polynomial $K_4$ is sometimes called the Kokopelli polynomial.  To each $p/n$-rabbit polynomial, there is an associated Kokopelli polynomial that lies nearby in the Mandelbrot set; the polynomial $K_n$ is the Kokopelli polynomial associated to $R_n$.  The polynomial $K_n$ is given by the complex kneading sequence $(1\,\; 1 \;\cdots\; 1\; {-1}\; \ast)$,  where the symbol 1 appears $n-2$ times (see Schleicher \cite{schleicher} for the definition of kneading sequence). 

Because of the Alexander method for quadratic polynomials (Proposition~\ref{prop:alex2}), the only things we need to know about $A_n$, $B_n$, and $K_n$ are the dynamical maps on their Hubbard trees.  The Hubbard trees are indicated in Figure~\ref{fig:fig}.  The dynamical maps on the Hubbard trees are described below.  

\[
\begin{array}{r@{\;=\;}l}
(A_n)_*(e_i) &
\begin{cases}
e_1e_2\cdots e_{n-1} & i=1 \\
e_{i-1}& 2\leq i\leq n-1
\end{cases} \\[20pt]
(K_n)_*(e_i) &
\begin{cases}
e_2e_3\hspace*{7.9ex}  & i=1 \\
e_{i+1}& 2\leq i\leq n-1\\
e_1e_2 & i=n
\end{cases}
\end{array}
\qquad
(B_n)_*(e_i) =
\begin{cases}
e_3\cdots e_{n-1}\hspace*{2ex} & i=1 \\
e_1e_2& i=2\\
e_1 & i=3 \\
e_2e_3 & i=4 \\
e_{i-1} & 5\leq i\leq n-1
\end{cases}
\]

\begin{figure}
\centering
\includegraphics{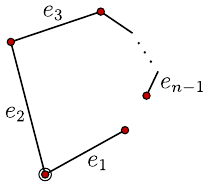}
\quad\qquad
\includegraphics{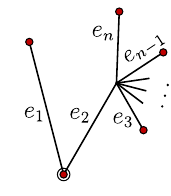}
\quad\qquad\qquad
\includegraphics{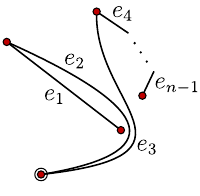}
\caption{The topological Hubbard trees for $D_x  R_n\simeq A_n$, $ D_y R_n\simeq K_n$, and $D_x^{-1} R_n\simeq B_n$}
\label{fig:fig}
\end{figure}

Finally, in the statement of the theorem, the \emph{4-free part} of a nonzero integer $m$ is $m/4^\ell$, where $4^\ell$ is the largest power of 4 that divides $m$.  We also define the 4-free part of 0 to be 0.  

\begin{theorem}
\label{thm:many}
Let $n \geq 4$, let $x=x_n$, and let $m \in \Z$.  Let $m'$ be the 4-free part of $m$.  Then  
\[D_x^m R_n\simeq \begin{cases}
R_n& m'=m=0\\
A_n& m' \equiv 1 \mod 4\\
K_n& m' \equiv 2 \mod 4\\
B_n& m' \equiv 3 \mod 4.
\end{cases}\]
\end{theorem}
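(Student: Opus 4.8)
The plan is to follow exactly the two-step template from Section~\ref{sec:onear}, but to carry out both steps uniformly in $n$. First I would set up notation: let $\Delta$ be the convex polygon with vertex set $P=P_{R_n}$, write $c_0,\dots,c_{n-1}$ for the post-critical points in cyclic order with $c_i = R_n^i(0)$, and let $x=x_n,y,z$ be the curves surrounding $\{c_1,c_2\},\{c_2,c_3\},\{c_3,c_4\}$ respectively (matching Figure~\ref{fig:ngen}), so that $x$ and $y$ surround the critical value $c_1$ but $z$ does not. As in the classical case I would replace $R_n$ by the topological model $\Rot\,\Sq$, where $\Sq$ is a double branched cover over $0$ fixing $\Delta$ and $\Rot$ rotates $P$ by $1/n$ preserving $\Delta$; Proposition~\ref{prop:alex2} applied to the $n$-pod inside $\Delta$ shows $\Rot\,\Sq\simeq R_n$, and the same reasoning gives the lift formula $\lambda_{D_x^m R_n} = \lambda_{\Sq}\,\Rot^{-1}\,D_x^{-m}$ that we will use to check the base cases.

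Next I would establish the lifting dictionary among the twists about $x,y,z$, which is where the uniformity in $n$ really comes from: one computes $R_n^{-1}(z)$, $R_n^{-1}(x)$, $R_n^{-1}(y)$ by hand (each preimage configuration is visibly independent of $n$ for $n\ge 4$, which is the point of the statement that ``the generalization is readily apparent from $n=4$''), obtaining $D_z\leadsto D_x$, $D_x^2\leadsto D_y$, $D_y^2\leadsto D_z$, together with $D_{D_x^{-1}(z)}\leadsto\mathrm{id}$ via the triviality criterion Lemma~\ref{lem:triviality} (using the special branch cut from $c_1$ to $\infty$ avoiding the interior of $\Delta$) and $D^2_{D_x^{-1}(y)}\leadsto D_z$. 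With these and the lantern relation $D_xD_yD_z=\mathrm{id}$, the four reduction formulas $D_x^{4k}R_n\simeq D_x^k R_n$, $D_x^{4k+1}R_n\simeq D_x R_n$, $D_x^{4k+2}R_n\simeq D_x^2 R_n\simeq D_y R_n$, $D_x^{4k+3}R_n\simeq D_x^{-1}R_n$ follow by the \emph{same four computations} (Cases 1--4) written in Section~\ref{sec:onear}, with $D_y R_n$ now playing the role of a genuinely new base case rather than collapsing to $D_x R_n$. Iterating the reductions reduces any $m$ to its $4$-free part $m'$ and to the residue of $m'$ mod $4$, giving the case split in the theorem modulo identifying the three base polynomials.

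Finally I would identify the base cases $D_x R_n$, $D_y R_n$, $D_x^{-1}R_n$ with $A_n$, $K_n$, $B_n$. For each, I apply the tree lifting algorithm a few times starting from the $n$-pod $H_{R_n}$ to guess an invariant tree (as in Figures~\ref{fig:invariant_under_DxR}--\ref{fig:invariant_under_Dx-1R}), then verify via the degree-2 Alexander method (Proposition~\ref{prop:alex2}) that the dynamical map on this tree agrees with the prescribed $(A_n)_*$, $(K_n)_*$, $(B_n)_*$ from Figure~\ref{fig:fig}; since the candidate trees and their dynamical maps are described uniformly in $n$, one verification handles all $n\ge 4$. One must also rule out that the base cases are $R_n$ itself or any other competing polynomial in $PC_n$: here the shape of the invariant tree (it is not an $n$-pod, or the rotation is in the wrong direction) together with the fact that $H_{R_n}$ is the unique invariant $n$-pod with counterclockwise $1/n$-rotation does the job, exactly as ``no tripod in the nucleus'' did in the $n=3$ case. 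The main obstacle I anticipate is not any single hard step but the bookkeeping of the preimage curves: one must be genuinely careful that the configurations of $R_n^{-1}(x),R_n^{-1}(y),R_n^{-1}(z),R_n^{-1}(D_x^{-1}(z)),R_n^{-1}(D_x^{-1}(y))$ stabilize for all $n\ge 4$ and that the relevant branch cut is special for every $n$ — once that stability is checked, the algebra is literally the $n=3$ algebra with $D_y R_n$ split off, and the Alexander-method verifications are finite pictures.
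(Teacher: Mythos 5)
Your overall architecture (reduction formulas, then base cases identified via the degree-2 Alexander method) matches the paper, and your base-case step is essentially the paper's: one exhibits invariant trees for $D_xR_n$, $D_yR_n$, $D_x^{-1}R_n$ and matches their dynamical maps against $(A_n)_*$, $(K_n)_*$, $(B_n)_*$ via Proposition~\ref{prop:alex2}. But there is a genuine gap in your reduction step: you import the $n=3$ lifting dictionary and the lantern relation wholesale, and both fail for $n\geq 4$. First, $D_z\leadsto D_x$ is false for $n\geq 4$: the lift of $z$ is the curve surrounding the \emph{next} pair of consecutive post-critical points, and one must traverse the full length-$n$ cycle $D_y^2\leadsto D_z\leadsto D_{c_3}\leadsto\cdots\leadsto D_{c_{n-1}}\leadsto D_x$ before returning to $x$. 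Second, the lantern relation $D_xD_yD_z=\mathrm{id}$ holds in $\PMod(\R^2,P)$ only when $|P|=3$ (the outer curve of the lantern then surrounds all marked points and its twist is trivial); for $n\geq 4$ the three curves sit among four distinct post-critical points and no such relation is available. Since the Case~3 and Case~4 computations of Section~\ref{sec:onear} each invoke the lantern relation, they cannot be rerun verbatim --- indeed you cannot get $D_x^{4k+3}\sim D_x^{-1}$ out of a computation that, for $n=3$, provably yields $D_x^{4k+3}\sim D_x^k$. (Your shortcut $D_x^{4k+2}\sim D_x^2$ is also unjustified: $\sim$ is an equivalence of post-compositions, not a congruence, so you cannot cancel $D_x^{4k}\sim\mathrm{id}$ inside a product.)

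What the paper actually does for $n\geq 4$ is replace the lantern relation with new triviality and transfer facts obtained from Lemma~\ref{lem:triviality} and explicit curve lifts --- namely $D_{D_x^{-1}(c_{n-1})}\leadsto\mathrm{id}$, $D_{D_y^{-1}(z)}\leadsto\mathrm{id}$, $D_{D_xD_y(z)}\leadsto\mathrm{id}$, and $D_{D_x(y)}\leadsto D_{D_y(z)}$ --- together with the observation that for $n\geq 4$ the twists $D_z,D_{c_3},\dots,D_{c_{n-2}}$ commute with $D_x$ (their curves are disjoint, which is exactly what fails at $n=3$ and is why the $4k+3$ answer changes from $D_x^k$ to $D_x^{-1}$). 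To repair your proposal you would need to state and verify these facts and redo Cases 2--4 with them; the $n=3$ algebra does not transport.
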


Our answer to the twisted many-eared rabbit problem can be put into a similar form to the way Bartholdi--Nekrashevych phrased the answer to the original twisted rabbit problem:  if the first non-zero digit in the 4-adic expansion of $m$ is 1, 2, or 3, then $D_x^mR_n$ is Thurston equivalent to $A_n$, $K_n$, or $B_n$, respectively; otherwise, if $m=0$ then $D_x^mR_n$ is Thurston equivalent to $R_n$.

\p{Combinatorial description of the $1/n$-rabbit polynomial}  We will now give a combinatorial topological description of a topological polynomial that is Thurston equivalent to $R_n$.   In what follows we denote by $P_n$ the post-critical set of $R_n$.  

Let $\Sq_n$ be any double branched cover $(\R^2,P_n)\to(\R^2,P_n)$ that is branched over 0 and fixes pointwise the convex polygon $\Delta$ determined by $P_n$.  Also, let $\Rot_n$ be a homeomorphism of $(\R^2,P_n)$ that rotates the points $P_n$ counterclockwise and preserves $\Delta$.  As in Section~\ref{sec:onear}, it follows from the Alexander method (Proposition~\ref{prop:alex2}) that $\Rot_n \Sq_n$ is homotopic to $R_n$ relative to $P_n$, hence is Thurston equivalent to $R_n$.  (Similarly, the $q/n$-rabbit polynomial is homotopic to $\Rot_n^q \Sq_n$.)

\p{Reduction formulas} The reduction formulas for the twisted many-eared rabbit problem are similar to the reduction formulas for the original twisted rabbit problem. The main difference is in the $4k+3$ case: in the original twisted rabbit problem, the power of $D_x$ subtracts 3 and divides by 4, while here it immediately drops to $-1$. The underlying reason for the difference in this case is that for $n>3$ there are certain Dehn twists that arise in the calculation that commute, while for $n=3$ no two distinct Dehn twists commute.

Another difference to highlight is that the $4k+1$ and $4k+2$ cases reduce to different base cases in the twisted many-eared rabbit problem, whereas for the original twisted rabbit problem, they reduce to the same base case. 

Throughout this section, let $y=y_n$ and $z=z_n$ be the curves shown in Figure~\ref{fig:ngen}.

\begin{lemma}
\label{n_reduction}
Let $n \geq 4$, let $x=x_n$, and let $m \in \Z$.  Then 
\[D_x^m R_n\simeq \begin{cases}
D_x^k R_n&m=4k\\
D_x  R_n&m=4k+1\\
D_y R_n&m=4k+2\\
D_x^{-1} R_n&m=4k+3.
\end{cases}
\]
\end{lemma}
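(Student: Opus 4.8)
The plan is to mirror the structure of the calculation in Section~\ref{sec:onear} for the original twisted rabbit problem, working with the combinatorial model $R_n \simeq \Rot_n\Sq_n$ so that lifting of Dehn twists is computed by lifting curves through the genuine double cover $\Sq_n$. First I would record the basic lifting data for the three curves $x = x_n$, $y = y_n$, $z = z_n$ of Figure~\ref{fig:ngen}: as in the $n=3$ case, $z$ does not surround the critical value $R_n(0)$ while $x$ and $y$ do, so $D_z$ itself lies in $\LMod(\R^2,P_n)$ with $\psi(D_z)=D_x$ (write $D_z \leadsto D_x$), whereas $D_x^2 \leadsto D_y$ and $D_y^2 \leadsto D_z$. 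These three facts come from identifying, for each curve, the components of its preimage under $\Sq_n$ (equivalently $R_n$) and using the degree-$2$ lifting rules (1)--(2) recalled just before Lemma~\ref{lem:triviality}; one must check the preimage of $x$ (resp.\ $y$) is a single essential curve isotopic to $y$ (resp.\ $z$), and the preimage of $z$ has a single essential component isotopic to $x$. I would also use Lemma~\ref{lem:triviality} to obtain $D_{D_x^{-1}(z)} \leadsto \mathrm{id}$, with the special branch cut being the straight ray from $R_n(0)$ to $\infty$ avoiding the interior of $\Delta$; the defining arc for $D_x^{-1}(z)$ crosses this branch cut an odd number of times, just as in the $n=3$ case.

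Given this data, the cases $m=4k$, $4k+1$, $4k+2$ are handled essentially verbatim as Cases~1--3 of Section~\ref{sec:onear}, using $g \stackrel{h}{\leadsto} g'$ notation and Lemma~\ref{lem:liftbyborrow}. For $m=4k$ I'd write $D_x^{4k}=(D_x^2)^{2k}\leadsto D_y^{2k}=(D_y^2)^k\leadsto D_z^k\leadsto D_x^k$. For $m=4k+1$ I need the supplementary fact that $D^2_{D_x^{-1}(y)}\leadsto D_z$ (because $R_n^{-1}(D_x^{-1}(y))$ has a single component isotopic to $z$ — this is the direct analogue of Figure~\ref{fig:rabbit_lift} and should be verified by an explicit curve-lift picture for general $n$), and then $D_x^{4k+1}\stackrel{D_x}{\leadsto}D_y^{2k}D_x\stackrel{D_x}{\leadsto}D_z^kD_x\stackrel{D_x}{\leadsto}D_x$. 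The $m=4k+2$ case, however, is where the answer genuinely differs from $n=3$: here we reduce to $D_yR_n$ rather than back to $D_xR_n$. I would compute $D_x^{4k+2}\leadsto D_y^{2k+1}=(D_y^2)^kD_y\leadsto D_z^kD_y$ and then lift-by-borrowing with $h=D_y$, giving $D_z^kD_y\stackrel{D_y}{\leadsto}\psi(D_y^{-1}D_z^k D_y)D_y = \psi(D_{D_y^{-1}(z)}^k)D_y$; one then checks, via curve-lifting, that $D_{D_y^{-1}(z)}\leadsto\mathrm{id}$ (by a second application of Lemma~\ref{lem:triviality}, or by noting $D_y^{-1}(z)$ has the same relevant intersection parity), so this collapses to $D_y$, i.e.\ $D_x^{4k+2}\sim D_yR_n$. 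The absence of the "$4$-adic recursion" here — the exponent drops straight to $1$ on $D_y$ rather than shrinking by a factor of $4$ — is exactly the behavioral difference noted in the text.

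The $m=4k+3$ case is the one I expect to be the main obstacle, and it is where the hypothesis $n\geq 4$ is used. As in Case~4 of Section~\ref{sec:onear} I would start with $D_x^{4k+3}\stackrel{D_x}{\leadsto}D_y^{2k+1}D_x$ and rewrite using an appropriate lantern-type relation among the relevant curves; but the key point flagged in the surrounding text is that for $n>3$ there are distinct Dehn twists appearing in the computation that \emph{commute} (because the corresponding curves are disjoint and non-nested in the larger surface $(\R^2,P_n)$), whereas for $n=3$ no two distinct Dehn twists commute. Exploiting this commutation lets the power of $D_x$ drop immediately to $-1$ rather than recursing: schematically, after rewriting $D_y^{2k+1}D_x$ and peeling off a factor via $D_{D_x^{-1}(z)}\leadsto\mathrm{id}$ one is left with something of the form $D_y^{2k}D_x^{-1}$-type expression whose lift, using the commuting twists to cancel the bulk of the exponent, is equivalent to $D_x^{-1}$. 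I would make this precise by identifying explicitly which two curves commute (likely $x_n$ and a lift-related curve that is disjoint from it for $n\geq 4$ but not for $n=3$), proving commutation from disjointness of representatives, and then carrying out the bookkeeping; the honest check that every curve-lift and every relation used is valid for all $n\geq 4$ uniformly — not just for small $n$ one can draw — is the real content. Once all four cases are established, the lemma follows, and Theorem~\ref{thm:many} then follows by iterating these reductions down to the base cases $m\in\{-1,1\}$ (and $m$ with first nonzero $4$-adic digit $2$, giving $D_yR_n$), whose Thurston classes $A_n$, $B_n$, $K_n$ are identified by exhibiting the invariant Hubbard trees of Figure~\ref{fig:fig} and applying the degree-$2$ Alexander method, Proposition~\ref{prop:alex2}.
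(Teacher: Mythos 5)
Your overall strategy (the combinatorial model $\Rot_n\Sq_n$, lift-by-borrowing via Lemma~\ref{lem:liftbyborrow}, Lemma~\ref{lem:triviality}, reduction to base cases identified with Proposition~\ref{prop:alex2}) is the same as the paper's, and your cases $m=4k$ and $m=4k+2$ essentially go through. But two of your ``basic facts'' are false for $n\geq 4$, and they break the $4k+1$ and $4k+3$ cases. First, $\psi(D_z)\neq D_x$: the essential preimage of $z=c_2$ is $c_3$, not $x$, so the correct statement is the length-$(n-2)$ chain $D_z\leadsto D_{c_3}\leadsto\cdots\leadsto D_{c_{n-1}}\leadsto D_x$ (for $m=4k$ this only lengthens the computation, since $\sim$ is transitive, but elsewhere it matters). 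Second, and more seriously, $D_{D_x^{-1}(z)}\leadsto\mathrm{id}$ fails for $n\geq4$: here $x$ and $z$ are disjoint, so $D_x^{-1}(z)=z$, its defining arc (an edge of the convex hull of $P_n$) meets the special branch cut zero times --- even parity --- and in fact $D_z\leadsto D_{c_3}\neq\mathrm{id}$. Your step $D_z^kD_x\stackrel{D_x}{\leadsto}D_x$ in the $4k+1$ case therefore fails. The paper instead continues $D_z^kD_x\stackrel{D_x}{\leadsto}D_{c_3}^kD_x\stackrel{D_x}{\leadsto}\cdots\stackrel{D_x}{\leadsto}D_{c_{n-1}}^kD_x\stackrel{D_x}{\leadsto}D_x$, using that $D_z,D_{c_3},\dots,D_{c_{n-2}}$ all commute with $D_x$ (this is where the commutation you anticipated actually enters --- in the $4k+1$ case, not the $4k+3$ case) and that $D_{D_x^{-1}(c_{n-1})}\leadsto\mathrm{id}$ by Lemma~\ref{lem:triviality}: it is $c_{n-1}$, the curve adjacent to $x$, not $z$, whose image under $D_x^{-1}$ has odd intersection with the branch cut.

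For $m=4k+3$ your sketch does not constitute a proof and points in the wrong direction: there is no lantern relation among $x$, $y$, $z$ when $n\geq4$ (the lantern lives in a disk with three marked points), and you again invoke the false fact $D_{D_x^{-1}(z)}\leadsto\mathrm{id}$. The paper's actual argument borrows with $h=D_x^{-1}$ rather than $D_x$, giving $D_x^{4k+3}\stackrel{D_x^{-1}}{\leadsto}D_y^{2k+2}D_x^{-1}$, and then uses two curve-lift facts you have not identified: $D_{D_x(y)}\leadsto D_{D_y(z)}$, which yields $D_yD_z^{2k+2}D_y^{-1}D_x^{-1}$, and $D_{D_xD_y(z)}\leadsto\mathrm{id}$ (again Lemma~\ref{lem:triviality}), which collapses everything to $D_x^{-1}$ in one further step. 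So the exponent drops to $-1$ not because commuting twists ``cancel the bulk of the exponent'' but because the entire conjugated power $D_{D_xD_y(z)}^{2k+2}$ lifts to the identity. These are concrete missing ingredients, not merely unchecked details of a correct outline.
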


\begin{proof}

As in Section~\ref{sec:onear} we write $g \sim h$ if $gR_n \simeq hR_n$.  As in the proof of the reduction formulas for the original twisted rabbit problem, we begin by listing some basic formulas that will be used in the four cases.  Let $P_n = P_{R_n}$ and let $\{c_0,\dots,c_{n-1}\}$ be the cyclically ordered curves in $(\R^2,P_n)$ obtained by taking the boundary of a regular neighborhood of the straight line segments between pairs of consecutive post-critical points (for $R_6$, the curves $c_0,c_1,c_2,c_3,c_4,c_5$ are shown in Figure~\ref{fig:ngen}).  The curves $c_0$, $c_1$, and $c_2$ are $x_n$, $y_n$, and $z_n$, respectively; in what follows we refer to these curves as $x$, $y$, and $z$.

The basic formulas we will use are 
\[
D_x^2 \leadsto D_y
\]
and
\[D_y^2 \leadsto D_z \leadsto D_{c_3} \leadsto \cdots \leadsto D_{c_{n-1}} \leadsto D_x.
\]

We will treat the cases where $m$ is equal to $4k$, $4k+1$, $4k+2$, and $4k+3$ in turn.  In the second, third, and fourth cases, we will apply Lemma~\ref{lem:triviality}.  The special branch cut is again the straight ray from the critical value to $\infty$ that avoids the convex hull of $P_n$.

\bigskip

\noindent \emph{Case 1: $m=4k$.} In this case we have
\[D_x^{4k}\leadsto D_y^{2k} 
\leadsto D_z^k
\leadsto D_{c_3}^k \leadsto \cdots \leadsto D_{c_{n-1}}^k  \leadsto
D_x^k.\]
We conclude that $D_x^{4k}  \sim D_x^k$, as desired.\\

\begin{figure}
\centering
\raisebox{-0.47\height}{\includegraphics{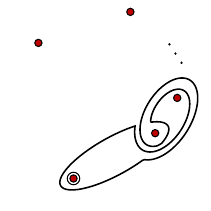}}
$\quad\qquad\longrightarrow\quad\qquad$
\raisebox{-0.47\height}{\includegraphics{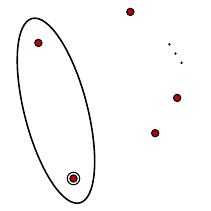}}
\caption{\emph{Left:} the curve $D_x^{-1}(y)$; \emph{Right:} its lift $\lambda_{R_n}(D_x^{-1}(y))=z$}
\label{fig:twistz}
\end{figure}

\bigskip

\noindent \emph{Case 2: $m=4k+1$.} In this case we require three new facts.  The first fact is that
\[
D^2_{D_x^{-1}(y)} \leadsto D_z.
\]
Indeed, the curve $D_x^{-1}(y)$ and its preimage under $R_n$ are shown in Figure \ref{fig:twistz}.  The second fact is that $D_x^{-1}(c_{n-1}) \leadsto \textrm{id}$; this follows from Lemma~\ref{lem:triviality}.  The third is that the Dehn twists $D_z, D_{c_3}, \cdots, D_{c_{n-2}}$ all commute with $D_x$.  With these facts in hand we have
\[
D_x^{4k+1}\stackrel{D_x}{\leadsto}D_y^{2k}D_x\stackrel{D_x}{\leadsto}D_z^k D_x
\stackrel{D_x}{\leadsto}
D_{c_3}^k D_x \stackrel{D_x}{\leadsto}
\cdots
\stackrel{D_x}{\leadsto}D_{c_{n-1}}^kD_x\stackrel{D_x}{\leadsto} D_x.
\]
Therefore we conclude that $D_x^{4k+1} \sim D_x$, as desired.\\

\bigskip

\noindent \emph{Case 3: $m=4k+2$.} In this case we use the fact that $D_{D_y^{-1}(z)} \leadsto \textrm{id}$; again this follows from Lemma~\ref{lem:triviality}.  We thus have
\[D_x^{4k+2}\leadsto D_y^{2k+1}\stackrel{D_y}{\leadsto}D_z^kD_y\stackrel{D_y}{\leadsto}D_y.\] 
So $D_x^{4k+2} \sim D_y$, as desired.

\bigskip

\noindent \emph{Case 4: $m=4k+3$.} In this case we use two additional facts. The first is that $D_{D_x(y)} \leadsto D_{D_y(z)}$ (similar to Figure~\ref{fig:twistz}). The second is that $D_xD_yD_zD_y^{-1}D_x^{-1}=D_{D_xD_y(z)} \leadsto \textrm{id}$; again this follows from Lemma~\ref{lem:triviality}.  We have
\begin{align*}
D_x^{4k+3}&\stackrel{D_x^{-1}}{\leadsto}D_y^{2k+2}D_x^{-1}\stackrel{D_x^{-1}}{\leadsto} D_yD_z^{2k+2}D_y^{-1}D_x^{-1}\stackrel{D_x^{-1}}{\leadsto}D_x^{-1}.
\end{align*}
We conclude that $D_x^{4k+3}  \sim D_x^{-1}$, as desired.
\end{proof}

\p{Base cases} To complete our solution to the twisted many-eared rabbit problem, it remains to determine the base cases.

\begin{proof}[Proof of Theorem~\ref{thm:many}]

It follows from Lemma~\ref{n_reduction} that when $m \neq 0$ the map $D_x^m R_n$ is Thurston equivalent to either $D_xR_n$, $D_yR_n$, or $D_x^{-1} R_n$. 
It remains to check that $D_xR_n$, $D_yR_n$, and $D_x^{-1} R_n$ are Thurston equivalent to $A_n$, $K_n$, and $B_n$, respectively.  We may do this by applying the Alexander method (Proposition \ref{prop:alex2}) using the Hubbard trees and dynamical maps for $A_n$, $K_n$, and $B_n$ given above.
\end{proof}

\p{More generalizations} Notice that because each of $c_2,c_3,\dots,c_{n-1}$ lifts to $x$ under iteration of the lifting map for $R_n$, we have that $D_{c_i}^m \simeq D_x^m$ for all $i\neq 1$ and $m \in \Z$.  This means that our solutions to the twisted rabbit problems of the form $D_x^mR_n$ also immediately give solutions to all twisted rabbit problems of the form $D_{c_i}^mR_n$ with $i \in \{2,\dots,n-1\}$.


\subsection{Twisting \boldmath$z^2+i$}
\label{sec:z2+i}

In this section, we use our methods to recover results of Bartholdi--Nekrashevych on twisting the polynomial $I(z)=z^2+i$ by the elements of a particular cyclic subgroup of $\PMod(\R^2,P_I)$.  In the case that the twisted map is obstructed, we give a complete topological description, using the canonical form for topological polynomials from Section~\ref{sec:canonical form}.  At the end of the section, we explain how to classify twistings by other elements of $\PMod(\R^2,P_I)$.

\begin{figure}
\centering
\raisebox{-0.47\height}{\includegraphics{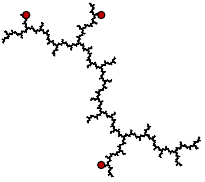}}
\qquad\qquad
\raisebox{-0.47\height}{\includegraphics{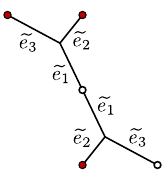}}
$\;\;\rightarrow\;\;$
\raisebox{-0.47\height}{\includegraphics{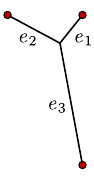}}
\caption{The Julia set and post-critical set for $I=I_3$, as well as the topological Hubbard tree for~$I$ and its preimage.  Preimages of marked points are shown in white, and each edge labeled $\widetilde{e_i}$ maps isomorphically to~$e_i$}
\label{fig:I3}
\end{figure}

Let $a$, $b$, and $c$ be the curves in $(\R^2,P_{I})$ shown in Figure~\ref{fig:I3curves}(a); these curves are situated similarly as in the twisted rabbit problems, but we match notation for $a$ and $b$ with that of Bartholdi--Nekrashevych (see \cite[Section~6.3]{BaNe}).   

The twisted $z^2+i$ problem that we solve here is:
\begin{quote}
    \emph{Let $m \in \Z$.  To which topological polynomial is $D_b^mI$ Thurston equivalent?}
\end{quote}
We pose the problem using the twist about $b$ rather than $a$ or $c$, as this choice yields the most interesting phenomena.

The Hubbard tree for $I$ as well as its full preimage under $I$ are depicted in Figure~\ref{fig:I3}.  This pair of trees will play an analogous role in our calculations that the $\Rot \Sq$ map played in our calculations for the rabbit polynomial.  Indeed, the data of the pair of trees suffices to determine the lifts of trees and curves.

Recall from the introduction that $\bar I(z) = z^2-i$.  It follows from the work of Bartholdi--Nekrashevych that the answer to the above twisted $z^2+i$ problem is
\[
D_b^m I \simeq
\begin{cases}
I &m \equiv 0 \mod 4\\
D_b^{-1} D_a^{-1} I&m \equiv 1 \mod 4\\
D_a^{-1} I &m \equiv 2 \mod 4\\
\bar{I}&m \equiv 3 \mod 4.\\
\end{cases}
\]
The maps $D_b^{-1} D_a^{-1} I$ and $D_a^{-1} I$ are obstructed; we describe them below.

Note that in their calculations, Bartholdi--Nekrashevych use right-handed Dehn twists for twisting $z^2+i$, while they use left-handed Dehn twists for twisting the rabbit polynomial. We will stay with our convention of using left-handed Dehn twists for twisting $z^2+i$; the reader should be cognizant of this difference when comparing our answers and calculations to those of Bartholdi--Nekrashevych.

\begin{figure}
\centering
$\underset{\textstyle\text{(a)}}{\includegraphics{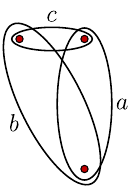}}$
\qquad\qquad
$\underset{\textstyle\text{(b)}}{\includegraphics{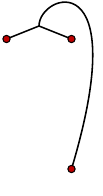}}$
\qquad\qquad
$\underset{\textstyle\text{(c)}}{\includegraphics{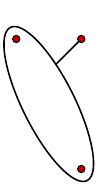}}$
\caption{(a)~The curves $a$, $b$, and $c$ in $(\R^2,P_I)$. (b)~The topological Hubbard tree for $D_c^{-1}I$. (c)~The Hubbard bubble tree for $D_a^{-1}I$ and $D_b^{-1}D_a^{-1}I$}
\label{fig:I3curves}
\end{figure}

\p{Reduction formulas} The reduction formulas for the twisted $z^2+i$ problem are:
\[
D_b^{m} I\simeq 
\begin{cases}
I &m=4k\\
D_b^{-1} D_a^{-1} I &m=4k+1\\
D_a^{-1} I&m=4k+2\\
D_c^{-1} I&m=4k+3\\
\end{cases}
\]
We now justify the reduction formulas. We will use the following facts: $D_a^2 \leadsto \textrm{id}$, $D_b \leadsto D_c$, and $D_c^2 \leadsto D_a$.  We will also use the fact that the preimage of $D_c(a)$ under $I$ is a trivial curve.  We treat the four cases in turn.

\bigskip

\noindent \emph{Case 1: $m=4k$.}  $D_b^{4k} \leadsto D_c^{4k} \leadsto D_a^{2k} \leadsto \textrm{id}$.

\medskip

\noindent\emph{Case 2: $m=4k+1$.}\,$D_b^{4k+1}\/{\leadsto} D_c^{4k+1} \stackrel{D_c^{-1}}\leadsto D_a^{2k+1}D_c^{-1} \stackrel{D_a^{-1}D_c^{-1}}\leadsto D_a^{-1}D_c^{-1} = D_b \leadsto D_c=D_b^{-1}D_a^{-1}$.

\medskip

\noindent\emph{Case 3: $m=4k+2$.}  $D_b^{4k+2} \leadsto D_c^{4k+2} \leadsto D_a^{2k+1} \stackrel{D_a^{-1}}\leadsto D_a^{-1}$.

\medskip

\noindent\emph{Case 4: $m=4k+3$.} $D_b^{4k+3} \leadsto D_c^{4k+3} \stackrel{D_c^{-1}} \leadsto D_a^{2k+2}D_c^{-1} \stackrel{D_c^{-1}}\leadsto D_c^{-1}$.

\p{Base cases} From the above topological description of $I$, we see that the dynamical map $I_*$ on the Hubbard tree $H_I$ is 
\begin{align*}
I_*(e_i) &= 
\begin{cases}
e_2 e_3 e_3 & i=1 \\
e_{i-1}& 2\leq i\leq 3.
\end{cases}
\end{align*}
The Hubbard tree for $\bar I$ is the reflection of $H_I$ across the $x$-axis, and the dynamical map $\bar I_*$ is the conjugate of $I_*$ by this reflection.  

The Hubbard trees for $I$ and $\bar I$ are both tripods.  We can distinguish their Thurston equivalence classes as follows: the polynomial $I$ rotates two edges of its Hubbard tree in the counterclockwise direction and $\bar I$ rotates two of the edges of its Hubbard tree in the clockwise direction (in both cases there is a third edge that gets stretched over three edges).  

As per the reduction formulas, there are three base cases to consider: $D_c^{-1} I$, $D_a^{-1} I$, and $D_b^{-1} D_a^{-1} I$. Figure~\ref{fig:I3curves}(b) shows the topological Hubbard tree for $D_c^{-1} I$; this tree lies in the same $\Mod(\R^2,P_I)$-orbit as the trees for $I$ and $\bar I$. The map $D_c^{-1} I$ is Thurston equivalent to $\bar{I}$ (and not $I$), since $(D_c^{-1} I)_*$ rotates two of the edges in the clockwise direction.  

The maps $D_a^{-1}I$ and $D_b^{-1} D_a^{-1} I$ are both obstructed; we will describe them in terms of the canonical form from Section~\ref{sec:canonical form}. They each have the Hubbard bubble tree shown in Figure~\ref{fig:I3curves}(c). For each, the single curve $b$ is the canonical Levy cycle; there is a single marked point  on the boundary of the closed Levy disk bounded by $b$ where the exterior forest meets~$b$. And for each, the exterior map is Thurston equivalent to the polynomial $z^2-2$, the unique polynomial with the required portrait (up to affine equivalence). The Hubbard tree for $z^2-2$ is an edge between two vertices.

For $D_a^{-1} I$,\label{dai} the interior map on the closed Levy disk bounded by $b$ is a left-handed half-twist.  As above, this can be seen from the computation of the lift under $D_a^{-1} I$ of a single tree.  The required calculation is shown in Figure~\ref{fig:I3calculation}.

For $D_b^{-1} D_a^{-1} I$, the interior map on the closed Levy disk bounded by $b$ is a right-handed half-twist.  This is simply the induced interior map of $D_a^{-1} I$ post-composed by the mapping class $D_b^{-1}$.

\begin{figure}
\centering
$\raisebox{-0.47\height}{\includegraphics{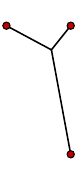}}
\quad\xrightarrow{\textstyle D_a}\quad
\raisebox{-0.47\height}{\includegraphics{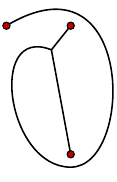}}
\quad\xrightarrow{\textstyle I^{-1}}\quad
\raisebox{-0.47\height}{\includegraphics{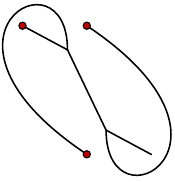}}
\quad=\quad
\raisebox{-0.47\height}{\includegraphics{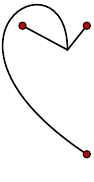}}$
\caption{A tree and a calculation of its lift under $D_a^{-1}I$}
\label{fig:I3calculation}
\end{figure}

\subsection{Twisting the generalized \boldmath$z^2+i$ polynomials}
\label{sec:genz2+i}

In this section we define an infinite family of polynomials $I_n(z)$ with $I_3=I$ and solve a generalization of the twisted $z^2+i$ problem from the previous section.  We give a unified argument for all numbers of marked points $n \geq 4$, just as we did for the rabbit polynomial. As in that problem, the answer here for $n \geq 4$ marked points differs in character from the answer for $n=3$.

\p{A set of generalized $z^2+i$ maps} For $n \geq 3$, we define the polynomial $I_n$ to be the polynomial $z^2+c$, where $c$ is the landing point of the external ray of the Mandelbrot set of angle $\tfrac{1}{3}\left(\tfrac{1}{2^{n-2}}\right)$ (see, e.g.,  Douady--Hubbard \cite{DH2}). Each polynomial $I_n$ is a quadratic polynomial such that 0 is a preperiodic critical point, there are $n$ post-critical points, and the last two post-critical points form a 2-cycle.  The Hubbard tree for $I_n$, as well as its preimage under $I_n$, is indicated in Figure~\ref{fig:In}(b).  As in Section~\ref{sec:z2+i}, we may regard the pair of trees in Figure~\ref{fig:In}(b) as a combinatorial description of $I_n$.  

\p{Statement of the problem} For each $n$ we define $n$ curves $d_i$ in $(\R^2,P_{I_n})$ as shown in Figure~\ref{fig:In}(c) for the case $n=5$. For each $n$, set $a=a_n=d_0$, $b=b_n=d_1$, and $c=c_n=d_{n-1}$. 

The twisted generalized $z^2+i$ problem is:
\begin{quote}
\emph{Let $m \in \Z$. To which topological polynomial is $D_{b}^mI_n$ Thurston equivalent?}
\end{quote}

\p{The answer} In the answer to the generalized twisted $z^2+i$ problem, there is only one polynomial, namely $I_n$, and one obstructed topological polynomial, $D_c^{-1} I_n$.  We give the canonical form for $D_c^{-1} I_n$ below.

\begin{theorem}
\label{thm:I}
Let $n \geq 4$ and let $m \in \Z$. Then
\[
D_b^m I_n \simeq
\begin{cases}
I_n  & m \text{ even}\\
D_c^{-1} I_n    & m \text{ odd}.\\
\end{cases}
\]
\end{theorem}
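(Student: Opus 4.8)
The plan is to mimic the two-step strategy used for the twisted rabbit and twisted many-eared rabbit problems: first establish reduction formulas that collapse the general power $m$ down to a finite set of base cases, and then identify each base case by exhibiting the appropriate invariant object and invoking the degree-2 Alexander method (Proposition~\ref{prop:alex2}) together with Poirier's conditions (Proposition~\ref{prop:PoirierConditions}) or their bubble-tree analogue (Proposition~\ref{prop:PoirierConditionsBubbleTree}).

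\emph{Step 1: reduction formulas.} Working with the combinatorial description of $I_n$ given by the pair of trees in Figure~\ref{fig:In}(b), I would compute the lifts under $I_n$ of the Dehn twists about the curves $d_i$. The key preliminary formulas to establish, parallel to the $n=3$ case, should be of the form $D_a^2 \leadsto \mathrm{id}$ (since $I_n^{-1}(a)$ is inessential or splits into two one-point curves), $D_b \leadsto D_{d_2}$, $D_{d_2} \leadsto D_{d_3} \leadsto \cdots \leadsto D_{d_{n-1}} = D_c$, and $D_c^2 \leadsto D_a$; these come directly from reading off which component of $I_n^{-1}(d_i)$ is essential. Using Lemma~\ref{lem:liftbyborrow} (lifting by borrowing), Lemma~\ref{lem:triviality} (the triviality lemma, with the special branch cut being the straight ray from the critical value to $\infty$ avoiding the relevant convex hull), and the commutation relations among the twists $D_{d_2},\dots,D_{d_{n-2}}$ that are available for $n\geq 4$ but not $n=3$, I expect to be able to show that $D_b^m I_n \simeq I_n$ whenever $m$ is even and $D_b^m I_n \simeq D_c^{-1}I_n$ whenever $m$ is odd; the even/odd dichotomy (rather than the $4$-adic behavior of the rabbit problems) should emerge because here lifting $D_b$ reaches $D_c$ in one step and $D_c^2$ lifts to $D_a$ which then dies, so powers divisible by $2$ trivialize in a bounded number of lifting steps while odd powers always leave a single $D_c^{-1}$ behind. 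The relation needed to kill the extra twists picked up by borrowing—something like $D_{D_c^{-1}(a)} \leadsto \mathrm{id}$—should again follow from Lemma~\ref{lem:triviality}.

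\emph{Step 2: base cases.} It remains to verify that $D_b^0 I_n = I_n$ is (trivially) Thurston equivalent to $I_n$, and that $D_c^{-1}I_n$ is the obstructed topological polynomial claimed. For the obstructed base case, I would identify the curve $c$ (the boundary of a neighborhood of the arc joining the two points of the $2$-cycle) as a Levy cycle: since the critical orbit is preperiodic with the last two post-critical points forming a $2$-cycle, the curve surrounding those two points pulls back degree-$1$ to an isotopic curve. Because in $(\R^2,P_{I_n})$ with this configuration the only multicurves are un-nested and one checks directly that the first-return map on the exterior complementary component is unobstructed, Proposition~\ref{prop:CanonicalObstructionStructure} and Proposition~\ref{prop:ExteriorComponentCheck} give that the invariant closure of $c$ is the canonical obstruction. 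I would then compute the Hubbard bubble tree: the exterior map should be Thurston equivalent to $z^2-2$ (the unique polynomial with the required portrait, with Hubbard tree a single edge), and the interior map on the closed disk bounded by $c$ is a half-twist, whose handedness is pinned down by computing the lift of a single explicit tree under $D_c^{-1}I_n$ exactly as in Figure~\ref{fig:I3calculation}. The canonical form $(f_E, f_I)$ of Section~\ref{sec:canonical form} then completely specifies $D_c^{-1}I_n$, and Proposition~\ref{prop:PoirierConditionsBubbleTree} certifies that the bubble tree we have produced is indeed the Hubbard bubble tree.

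\emph{Main obstacle.} The genuinely delicate part is Step 1: correctly tracking the curves (and the auxiliary twists introduced by borrowing) under iterated lifting, uniformly in $n$. The bookkeeping for which twists commute and which auxiliary twists trivialize via Lemma~\ref{lem:triviality} must be done carefully enough that a single argument covers all $n\geq 4$; the $n=3$ case genuinely behaves differently (no two distinct Dehn twists commute there), so the uniform argument cannot simply specialize. The base-case analysis, by contrast, is essentially a finite check once the reduction formulas are in hand, made rigorous by the Alexander method and Poirier's conditions.
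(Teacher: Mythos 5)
Your overall plan (reduction formulas via lifting-by-borrowing, then base cases via the Alexander method) is the paper's, and the chain $D_b\leadsto D_{d_2}\leadsto\cdots\leadsto D_c$ together with $D_a^2\leadsto\textrm{id}$ is correct. But there is a genuine error at the heart of Step~1: you take $D_c^2\leadsto D_a$ as a key formula. That relation holds only for $n=3$; for $n\geq 4$ the correct statement --- and it is exactly the point the paper flags as the difference from $I_3$ --- is $D_c^2\leadsto\textrm{id}$. The error is not cosmetic. With your formula, $D_b^{2k}\leadsto\cdots\leadsto D_c^{2k}\leadsto D_a^{k}$, and since only $D_a^2$ (not $D_a$ itself) lifts trivially, an exponent $m\equiv 2\pmod 4$ leaves a residual twist $D_a^{\pm1}$ rather than the identity. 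Your mechanism therefore produces a $4$-adic-style reduction with an extra base case $D_a^{\pm1}I_n$, which contradicts the parity dichotomy you are trying to prove; the sentence ``$D_c^2$ lifts to $D_a$ which then dies'' is inconsistent with your own formula $D_a^2\leadsto\textrm{id}$. With the correct relation the reduction is immediate and needs none of the commutation relations or auxiliary trivializations (such as $D_{D_c^{-1}(a)}\leadsto\textrm{id}$) that you anticipate: $D_b^{2k}\leadsto\cdots\leadsto D_c^{2k}\leadsto\textrm{id}$, and $D_b^{2k+1}\leadsto\cdots\leadsto D_c^{2k+1}\stackrel{D_c^{-1}}{\leadsto}\psi(D_c^{2k+2})D_c^{-1}=D_c^{-1}$. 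Moreover, these two lines already constitute a complete proof of Theorem~\ref{thm:I}, since the odd case reduces to the very map $D_c^{-1}I_n$ named in the statement.

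Your Step~2 is therefore not needed for the theorem, but as written it also imports the $n=3$ answers wholesale, and they are wrong for $n\geq 4$: the canonical Levy cycle for $D_c^{-1}I_n$ is $D_a(b)$, not $c$; the exterior first-return map is the polynomial $J_{n-1}$ with kneading sequence $10\cdots0\bar{1}$, whose Hubbard tree is a path of $n-2$ edges, and it cannot be $z^2-2$ because the exterior component carries $n-1>2$ marked points when $n\geq 4$; and the interior map is the order-two rotation of a disk with two boundary and two interior marked points, not a half-twist. The identification of the Levy cycle and the canonical form does follow the route you sketch (Propositions~\ref{prop:CanonicalObstructionStructure}, \ref{prop:ExteriorComponentCheck}, and~\ref{prop:PoirierConditionsBubbleTree}, plus the Alexander method), but the specific objects must be recomputed for $n\geq 4$ rather than copied from the $I_3$ case.
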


\begin{figure}
$\underset{\textstyle\text{(a)}}{\raisebox{-0.47\height}{\includegraphics{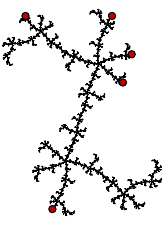}}}$
\qquad\hfill
$\underset{\textstyle\text{(b)}}{
\raisebox{-0.47\height}{\includegraphics{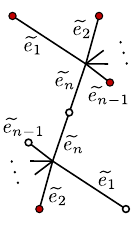}}
\;\;\rightarrow\;\;
\raisebox{-0.47\height}{\includegraphics{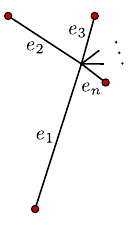}}
}$
\hfill
$\underset{\textstyle\text{(c)}}{\raisebox{-0.47\height}{\includegraphics{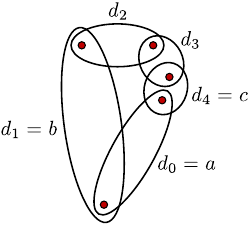}}}$
\caption{(a) The Julia set for~$I_5$. (b)~The topological Hubbard tree for $I_n$ and its preimage. (c)~The curves $d_i$ in $(\mathbb{R}^2,P_{I_n})$, including $a$, $b$, and~$c$}
\label{fig:In}
\end{figure}

\p{Reduction formulas} The reduction formulas for the generalized $z^2+i$ problem are:
\[
D_b^{m} I_n\simeq 
\begin{cases}
I_n    & m=2k\\
D_c^{-1} I_n    & m=2k+1.\\
\end{cases}
\]

To prove the reduction formulas, we again need only a few facts: $D^2_{a}~\leadsto~\textrm{id}$, $D^2_{c}~\leadsto~\textrm{id}$, and $D_{d_i}~\leadsto~D_{d_{i+1}}$ for all other $d_i$. One difference from the case of $I_3=I$ is that for $n=3$ we have $D^2_c \leadsto D_a$. 

\bigskip

\noindent \emph{Case 1: $m=2k$.}  $D_b^{2k} \leadsto \cdots \leadsto D_c^{2k}  \leadsto
\textrm{id}.$

\medskip

\noindent\emph{Case 2: $m=2k+1$.} $D_b^{2k+1} \leadsto \cdots \leadsto D_c^{2k+1}  \stackrel{D_c^{-1}}\leadsto
D_c^{-1}.$

\begin{figure}
\centering
\includegraphics{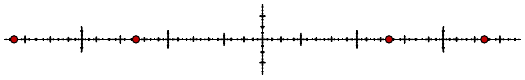} \\[16pt]
\raisebox{-0.4\height}{\includegraphics{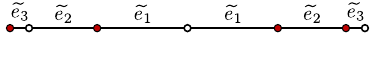}}$\qquad\rightarrow\qquad$
\raisebox{-0.4\height}{\includegraphics{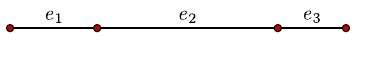}}
\caption{The Julia set for $J_4$, as well as the Hubbard tree for $J_4$ and its preimage}
\label{fig:Jn-1}
\end{figure}

\p{A sequence of polynomials} We now describe a sequence of polynomials $J_{n-1}$ that will appear as the exterior maps for the obstructed maps $D_c^{-1} I_n$ that appear in our answer to the generalized twisted $z^2+i$ problem.

The polynomial $J_{n-1}$ is the real quadratic polynomial with kneading sequence $10\cdots0\bar{1}$ where the symbol 0 is listed $n-3$ times when $n\geq 3$.  Its critical point is pre-periodic with pre-period $n-2$ and period 1.  The Hubbard tree for $J_{n-1}$ is a path consisting of $n-2$ edges.  If, as in Figure~\ref{fig:Jn-1}, the edges of $H_{J_{n-1}}$ are labeled $e_1,\dots,e_{n-1}$, then
\begin{align*}
(J_{n-1})_*(e_i) &= 
\begin{cases}
e_{n-2}e_{n-3} \cdots e_2 & i=1 \\
e_{i-1}& 2\leq i\leq n-2
\end{cases}
\end{align*}
The Julia set, Hubbard tree, and the full preimage of the Hubbard tree for $J_4$ are depicted in Figure~\ref{fig:Jn-1}.  For $n=3$, the kneading sequence for $J_{n-1}=J_2$ degenerates to $\bar{1}$, the kneading sequence for the polynomial $z^2-2$ that we saw in the solution to the twisted $z^2+i$ problem. 

\begin{figure}
\includegraphics{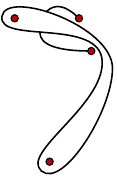}\qquad\qquad\qquad
\includegraphics{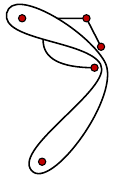}\qquad\qquad\qquad
\includegraphics{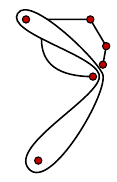}
\caption{The Hubbard bubble trees $D_c^{-1} I_n$, for $n=4,5,6$}
\label{fig:bubbleGenI}
\end{figure}

\p{Base cases} Unlike in the previous sections, the reduction formulas constitute a complete proof of Theorem~\ref{thm:I}.  To give a more satisfying resolution to the generalized twisted \mbox{$z^2+i$} problem, we further describe the canonical form of the obstructed map $D_c^{-1} I_n$, as per Section~\ref{sec:canonical form}.

Figure~\ref{fig:bubbleGenI} depicts the Hubbard bubble tree for $D_c^{-1} I_n$.  For each $D_c^{-1} I_n$, the single curve $D_a(b)$ is the canonical Levy cycle. 

For each $D_c^{-1}I_n$, we may use the Alexander method to show that the exterior map is Thurston equivalent to the polynomial $J_{n-1}$ described above.  

The interior map for each $D_c^{-1}I_n$ is a mapping class on the closed disk bounded by the Levy cycle $D_a(b)$ (this disk has two marked points on the boundary, in addition to the two in the interior).  Using the Alexander method we find that the interior map is the (unique) rotation of order 2. 

\p{More generalizations} Each curve $d_i$ except for $a=d_0$ lifts to $c$ under iteration of the lifting map for $I_n$. This means that (similar to Section~\ref{sec:manyear}), we obtain $D_{d_i}^m I_n \simeq D_c^m I_n$ for all $i\neq 0$ and $m \in \Z$.  Thus our solution to the twisted generalized $z^2+i$ problem of the form $D_b^mI_n$ also immediately gives solutions to all twisted generalized $z^2+i$ problems of the form $D_{c_i}^mI_n$ with $i \in \{2,\dots,n-1\}$.  

\bibliographystyle{plain}
\bibliography{LiftingAlgorithm}

\end{document}